\newtheorem{Theorem}{Theorem}[section]
\newtheorem{Lemma}[Theorem]{Lemma}
\newtheorem{Definition}[Theorem]{Definition}
\newtheorem{Corollary}[Theorem]{Corollary}
\newtheorem{Proposition}[Theorem]{Proposition}
\newtheorem{Notation}[Theorem]{Notation}
\newtheorem{Remark}[Theorem]{Remark}
\newtheorem{Fact}[Theorem]{Fact}
\newtheorem{Example}[Theorem]{Example}
\begin{document}

\title{On the Number of Arrows of Cluster Quivers}

\author{Qiuning Du, \; Fang Li\; and \; Jie Pan}
\address{Department of Mathematics, Zhejiang University (Yuquan Campus), Hangzhou, Zhejiang 310027, P.R.China}
\email{11735007@zju.edu.cn, \; fangli@zju.edu.cn, \; panjie$\_$zhejiang@qq.com}
\date{version of \today}
\keywords{cluster algebra, cluster quiver, mutation equivalence, Riemann surface, triangulation.}

\begin{abstract}
Let $\tilde{Q}$ (resp. $Q$) be an extended exchange (resp. exchange) cluster quiver of finite mutation type.
We introduce the distribution set of the number of arrows for $Mut[\tilde{Q}]$ (resp. $Mut[Q]$), give the maximum and minimum numbers of the distribution set and establish the existence of an extended complete walk (resp. a complete walk).
As a consequence, we prove that the distribution set for $Mut[\tilde{Q}]$ (resp. $Mut[Q]$) is continuous except the exceptional cases.

In case of cluster quivers $Q_{inf}$ of infinite mutation type, the number of arrows does not present a continuous distribution. Besides, we show that the maximal number of arrows of quivers in $Mut[Q_{inf}]$ is infinite if and only if the maximal number of arrows between any two vertices of a quiver in $Mut[Q_{inf}]$ is infinite.
\end{abstract}

\maketitle
\tableofcontents
\section{Introduction}
Cluster algebras were introduced by Fomin and Zelevinsky in \cite{FZ} with the original motivation to give a combinatorial characterization of dual canonical bases in Lie Theory and for the study of the total positivity. A significant notion in the theory of cluster algebras is cluster quiver. The algebraic property and combinatorial property are concerned in the study of cluster quivers, which are influential for the stucture of cluster algebras. In terms of algebraic property, cluster categories were generalized for cluster quivers, which are foundations of additive categorification of cluster algebras.  This paper is devoted to study on the combinatorial property of cluster quivers of finite mutatuion type and infinite mutation type.

Considering the combinatorial properties of quivers, there are two aspects, one is the number of vertices, the other is the number of arrows. Mutation preserves the number of vertices of cluster quivers, which is corresponding to the rank of cluster algebras. Thus we are interested in the variation of the number of arrows under mutations.

In \cite{S}, Ladkani classified the cluster quivers with the same number of arrows in their mutation equivalence class, and give representation-theoretic counterpart of mutation equivalence class with such combinatorial property: each class can be regarded as mutation equivalence class of quivers with potentials whose Jacobian algebras are all derived equivalent.
In \cite{BY}, Eric and Milen gave the calculation formula of the number of arrows for a quiver associated with a triangulation of a fixed surface and obtain the maximum number of arrows of the quiver arising from that surface.
Then a natural question is: how are the numbers of arrows distributed for the mutation equivalence class whose numbers of arrows are not constant?

We will mainly study the distribution of the number of arrows for cluster quivers in a mutation equivalence class.

We define an \textbf{extended exchange cluster quiver} $\tilde{Q}$ to be a cluster quiver whose arrows consist of arrows between mutable vertices and arrows  between a mutable vertex and a frozen vertex, and define  an \textbf{exchange cluster quiver} $Q$ to be a cluster quiver whose arrows consist of arrows between mutable vertices. Trivially, $Q$ can be seen as the full sub-quiver of $\tilde{Q}$ consisting of mutable vertices.

\emph{Because frozen variables we want to consider here come from sides on boundaries of surfaces, we will not consider to add frozen vertices for cluster quivers of exceptional type which do not arise from surfaces, that is, we are not concern with extended exchange cluster quivers of exceptional type.}

According to the calculation formulas of the numbers of arrows of $Q$ and $\tilde{Q}$ arising from surfaces (see Corollary \ref{calculation} and Proposition \ref{calculate-exQ}), the conditions for the numbers of arrows to be extremum in $Mut[Q]$ and $Mut[\tilde{Q}]$ are different, as well as so do the triangulations associated with quivers in $Mut[Q]$ and $Mut[\tilde{Q}]$ respectively. Therefore, we need to divide our discussion into two cases, namely with respect to $Q$ and $\tilde{Q}$ respectively. In particular, when $b=0$ for the surface, the conclusions for the two cases are the same.

We first give the maximum and minimum numbers of the distribution set (see Definition \ref{continuous}) for $Mut[Q]$ and $Mut[\tilde{Q}]$ respectively where $Q$ and $\tilde{Q}$ arise from a surface in the following proposition.
\begin{Proposition}[Proposition \ref{num-t} and Proposition \ref{num-check-t}]
  For $Q$ (resp. $\tilde{Q}$) arising from a surface $\mathbf{S}$, maximum number $t_{max}$ (resp. $\tilde{t}_{max}$) and minimum number $t_{min}$ (resp. $\tilde{t}_{min}$) of the distribution set for $Mut[Q]$ (resp. $Mut[\tilde{Q}]$) are given in Table \ref{type} (resp. Table \ref{type-t}).
\end{Proposition}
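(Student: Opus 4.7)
The plan is to leverage the calculation formulas of Corollary \ref{calculation} and Proposition \ref{calculate-exQ}, which express the number of arrows of $Q$ (resp. $\tilde{Q}$) as an explicit function of combinatorial invariants of the underlying triangulation $T$ of $\mathbf{S}$ (the number of internal arcs, the number of self-folded triangles, the distribution of arcs incident to punctures and to the boundary, etc.). Since mutation at an arc corresponds to a flip of the triangulation and any two triangulations of $\mathbf{S}$ are connected by a sequence of flips, the distribution set for $Mut[Q]$ (resp. $Mut[\tilde{Q}]$) coincides with the set of arrow counts realized as $T$ ranges over all triangulations of $\mathbf{S}$. The extremum problem is therefore reduced to an optimization problem over triangulations, where the formulas indicate which local configurations contribute positively or negatively to the total arrow count.

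First, I would isolate from the formulas the "variable" local contributions. Typically each triangle contributes a baseline count and each self-folded triangle, each puncture of a given valency, and each pair of arcs sharing a vertex on the boundary contribute correction terms; for $\tilde{Q}$ the boundary segments also enter. I would then set up the maximization and minimization by describing which local moves (replacing a triangle by a self-folded one, merging arcs at a puncture, separating boundary vertices from punctures) strictly increase or decrease the arrow count. This gives monotonicity statements that pin down the shape of the extremal triangulation: for the maximum, one wants triangulations that avoid self-folded triangles and concentrate arcs around punctures so that every puncture is of maximal possible valency; for the minimum, one wants to introduce as many self-folded triangles as possible and to spread arcs thinly.

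Next, I would carry out a case division on the topological data of $\mathbf{S}$, namely genus $g$, number of boundary components $b$, and number of punctures $p$ (with the case $b=0$ being special as indicated in the introduction). In each case I would \emph{exhibit} an explicit triangulation realizing the value stated in Table \ref{type} (resp.\ Table \ref{type-t}) and compute its arrow count via the formula; then I would show, using the monotonicity steps above, that no other triangulation can beat it. Small surfaces for which the general construction degenerates (such as the once-punctured surfaces with few boundary segments, or surfaces where $2g+b+p$ is too small to admit the generic model triangulation) have to be treated by hand and will appear as the exceptional rows of the tables.

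The main obstacle I expect is the case analysis and, in particular, the low-dimensional degenerate cases: when $p$ or $b$ is very small the global monotonicity argument breaks down because the candidate extremal triangulation does not exist, and one must replace it with an ad hoc triangulation and verify both existence and optimality by direct enumeration. A secondary technical difficulty is keeping the bookkeeping consistent between the $Q$ and $\tilde{Q}$ versions: the boundary segments enter the formula for $\tilde{Q}$ but not for $Q$, so the optimal triangulation may differ between the two cases (and is shown to coincide exactly when $b=0$, matching the remark in the introduction). Aside from these, the bulk of the argument is an exercise in applying the formulas and tracking the local contributions.
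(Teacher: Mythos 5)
Your proposal follows essentially the same route as the paper: read off from the calculation formulas which pieces contribute positively or negatively, derive optimality conditions on the triangulation (no boundary points of degree two, every puncture enclosed in a self-folded configuration, etc.), check case by case on $(g,b,p,c)$ when these conditions are simultaneously realizable, and exhibit explicit extremal triangulations (the paper's Minimum/Maximum Constructions), with degenerate small surfaces handled by direct enumeration. The only notable difference is that for the maximum of $W_\mathbf{S}$ the paper simply cites Corollary 4.4 of \cite{BY} rather than rederiving it from the formula, but that does not change the substance of the argument.
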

And we establish the existence of a complete walk and an extended complete walk (see Definition \ref{perfect}).
\begin{Lemma}[Lemma \ref{perfect1} and Lemma \ref{perfect2}]
There is a complete walk (resp. extended complete walk) in the graph $\mathbf{E^\circ(S,M)}$ of a surface $\mathbf{S}$ which is not the twice-punctured digon or the forth-punctured sphere (resp. the once-punctured triangle or the forth-punctured sphere).
\label{thmperfect}
\end{Lemma}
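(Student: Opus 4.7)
The plan is to prove both statements (for complete walks and for extended complete walks) by a case analysis on the topological type of the surface $\mathbf{S}=(g,b,p)$, combined with an induction that reduces larger surfaces to smaller ones via local elementary moves. First I would fix distinguished triangulations $T_{min}$ and $T_{max}$ realizing $t_{min}$ (resp.\ $\tilde{t}_{min}$) and $t_{max}$ (resp.\ $\tilde{t}_{max}$) supplied by the proposition above. The goal is then to construct an explicit walk in $\mathbf{E^\circ(S,M)}$ from $T_{min}$ to $T_{max}$, consisting of a sequence of flips along which the arrow count changes by small controlled increments, so that every intermediate integer value is attained at some triangulation along the way.

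For the inductive step, given a sufficiently large surface $\mathbf{S}$, I would remove a puncture, split off a boundary component, or cut along a suitable arc to obtain a smaller surface $\mathbf{S}'$ for which a complete walk exists by induction. The walk on $\mathbf{S}$ is then obtained by extending each triangulation of $\mathbf{S}'$ to one of $\mathbf{S}$ in a uniform way so that the extra triangulation data contributes a constant shift to the arrow count. Once the induced walk has traversed the range covered by the smaller walk, I would then perform additional flips that involve the reintroduced feature (puncture, boundary component, or arc) to cover the remaining range up to $t_{max}$.

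For the base cases, namely surfaces of low rank, I would enumerate the relevant triangulations up to the symmetry of $\mathbf{S}$ and verify directly that the flip graph contains a sequence realizing every intermediate arrow count. The four exceptional configurations (twice-punctured digon and four-punctured sphere for the complete walk, once-punctured triangle and four-punctured sphere for the extended complete walk) are then treated separately: I would list all of their triangulations explicitly and confirm that a gap in the attainable arrow counts is unavoidable, which is exactly why these surfaces must be excluded from the statement.

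The main obstacle will be the step-control requirement: a single flip may change the arrow count by more than one, so one cannot simply concatenate an arbitrary flip sequence from $T_{min}$ to $T_{max}$ and hope to hit every intermediate value. I expect to address this by a careful local analysis of how a flip alters the arrows incident to the flipped edge, using the arrow-count formulas from Corollary \ref{calculation} and Proposition \ref{calculate-exQ}, and by showing that whenever a flip would create a gap one can insert a short detour through an auxiliary triangulation that realizes the skipped values. The exceptional cases are precisely those in which the surface is too small to accommodate such detours, and this intuition is what guides the induction.
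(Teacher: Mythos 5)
Your overall strategy (case analysis on $(g,b,p,c)$, walking from a triangulation realizing $t_{min}$ to one realizing $t_{max}$ while controlling the increments) is the same as the paper's, but the inductive mechanism you propose to carry it out has a genuine gap. You want to obtain the walk on $\mathbf{S}$ by taking a complete walk on a smaller surface $\mathbf{S}'$ (obtained by deleting a puncture, splitting off a boundary component, or cutting along an arc) and extending each triangulation of $\mathbf{S}'$ to one of $\mathbf{S}$ ``in a uniform way so that the extra triangulation data contributes a constant shift.'' The inductive hypothesis gives you no control over which arcs the walk on $\mathbf{S}'$ flips: as soon as the walk flips the arc (or an arc bordering the region) where the deleted feature is reinserted, the uniform extension breaks down --- the flip in $\mathbf{S}'$ no longer corresponds to a flip in $\mathbf{S}$, and the arrow-count shift is no longer constant. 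Moreover the interval $[t_{min},t_{max}]$ strictly widens as $p$ grows (e.g.\ for closed genus-$0$ surfaces its length is $2p-4$ while the shift per added puncture is bounded), so the ``additional flips involving the reintroduced feature'' must cover a range whose length grows with the surface; nothing in your setup guarantees this. This is precisely why the paper abandons induction and instead writes down, for each family of surfaces, fully explicit composite walks $l_3\circ l_2\circ l_1(T_1)$ built from a fixed catalogue of local flip types (Table \ref{walk-f}), computes the attained sets $\Phi_i$ as interleaved arithmetic progressions of step $2$, and checks directly that their union is the whole interval.

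A second omission: Definition \ref{perfect} requires the walk to be \emph{acyclic}, i.e.\ it must not revisit a vertex of $\mathbf{E^\circ(S,M)}$. Since the walk necessarily passes through several triangulations with equal arrow counts, one must verify that these are pairwise non-isotopic; the paper does this at the end of every case, and your proposal does not address it at all. Finally, note that the exceptional surfaces are excluded from the statement of this lemma and their gaps are established separately (Lemmas \ref{bulianxu} and \ref{ex-bulianxu}), so that part of your plan, while correct, belongs elsewhere.
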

Then we obtain our main result (see Definition \ref{continuous} for a continuous distribution set ).
\begin{Theorem}[Theorem \ref{1} and Theorem \ref{2}]
Let $Q$ (resp. $\tilde{Q}$) be an exchange (resp. extended exchange) cluster quiver of finite mutation type, then the distribution set for $Mut[Q]$ (resp. $Mut[\tilde{Q}]$) is continuous except the exceptional cases (i) and (ii) (resp. (iii)) as follows:
\begin{enumerate}[(i)]
  \item $Q$ is of type $X_6$ or $X_7$;
  \item $Q$ arises from the twice-punctured digon or the forth-punctured sphere.
  \item $\tilde{Q}$ arises from the once-punctured triangle or the forth-punctured sphere.
\end{enumerate}
\label{main}
\end{Theorem}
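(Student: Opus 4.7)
The plan is to split the argument by the classification of finite mutation type cluster quivers: such a quiver is either associated to a triangulation of a marked surface $\mathbf{S}$ or is one of the finitely many exceptional types $E_n,\tilde E_n,E_n^{(1,1)}$ ($n=6,7,8$), $X_6,X_7$. By the standing convention stated in the introduction, extended exchange cluster quivers of exceptional type are excluded, so for $\tilde Q$ only the surface case needs treatment.

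For the exchange cluster quivers $Q$ of exceptional type, the mutation equivalence class is finite, so the distribution set can be computed by direct enumeration of $Mut[Q]$ and tabulation of the arrow counts. One finds that the resulting set is an interval in every exceptional type \emph{except} $X_6$ and $X_7$, which must therefore be listed as genuine exceptions, giving case (i).

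For the surface case --- applied to either $Q$ or $\tilde Q$ --- the strategy is to start from a triangulation $T_0$ whose associated quiver realizes the minimum $t_{\min}$ (resp.\ $\tilde t_{\min}$) from the previous proposition, and then to flip successively along the complete walk (resp.\ extended complete walk) whose existence is guaranteed by Lemma \ref{thmperfect}. The point of such a walk is precisely that each successive flip changes the arrow count by exactly one, so running along the entire walk produces triangulations whose quivers realize every integer from $t_{\min}$ up to $t_{\max}$ (resp.\ $\tilde t_{\min}$ up to $\tilde t_{\max}$); this is exactly the continuity condition of Definition \ref{continuous}.

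The main obstacle is to verify that each flip along a (extended) complete walk indeed changes the arrow count by exactly $1$. This uses the local formulas for the number of arrows (Corollary \ref{calculation} and Proposition \ref{calculate-exQ}): flipping a diagonal alters the arrow count according to the local configuration around it, and the defining property of the walk is precisely that only unit-change flips occur. Carrying this out requires a careful case analysis of the triangle pieces traversed by the walk, distinguishing interior arcs from those incident to the boundary --- which is exactly why the $Q$ and $\tilde Q$ cases split as they do. The excluded surfaces --- the twice-punctured digon and four-punctured sphere for $Q$, and the once-punctured triangle and four-punctured sphere for $\tilde Q$ --- are those in which no such walk exists; for them the distribution set has a gap, checked by a direct computation on the (finite) mutation class, yielding cases (ii) and (iii).
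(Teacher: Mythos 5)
Your overall architecture matches the paper's: split by the Felikson--Shapiro--Tumarkin classification, handle the exceptional types by direct enumeration of the (finite) mutation classes (which is exactly how the paper isolates $X_6$ and $X_7$), reduce the surface case to the existence of a (extended) complete walk, and verify the exceptional surfaces by hand. However, the mechanism you give for the surface case contains a genuine error. You assert that the defining property of a complete walk is that ``each successive flip changes the arrow count by exactly one,'' and you derive continuity from a discrete intermediate-value argument along the walk. That is not the definition in the paper, and it cannot be: individual flips on the walks actually constructed change the arrow count by $0$, $1$, or $2$ depending on the local configuration (flips of type $C^{\pm}$ change it by $2$, type $H$ by $0$, etc.). For instance, in the closed-sphere case the first walk segment realizes $[4p-8,4p-6]\cup\{4p-4+2s\}$, stepping by $2$ through one parity class, and a later segment $\Phi_3=\{6p-13-2s\}$ sweeps the other parity; continuity only emerges because the union $\Phi_1\cup\Phi_2\cup\Phi_3$ of the values realized on the several segments happens to fill the whole interval $[t_{\min},t_{\max}]$. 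A walk all of whose flips are unit-change flips generally does not exist, so your argument would fail at exactly the step that carries the real content of Lemmas \ref{perfect1} and \ref{perfect2}. What is needed (and what the paper supplies) is an explicit case-by-case construction, for each topological type of surface, of a composite walk whose segments jointly realize every integer between the extremal values computed in Propositions \ref{num-t} and \ref{num-check-t}; you have assumed this away. A minor further omission: the classification also includes quivers with two vertices, for which continuity must be (trivially) noted separately.
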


The paper is organized as follows.

In Section 2, we provide necessary backgrounds of cluster quivers, mutation, triangulations of a surface and flips.

In Section 3, we first make preliminary discussion on the number of arrows for several surfaces(the twice-punctured monogon, the once-punctured digon, the twice-punctured digon and the forth-punctured sphere), then give the maximum and minimum numbers of the distribution set for $Mut[Q]$ where $Q$ arises from a surface, finally prove Theorem \ref{main} with respect to exchange cluster quivers.

In Section 4, we make a similar discussion on the distribution set for $Mut[\tilde{Q}]$, and give the proof of Theorem \ref{main} with respect to extended exchange cluster quivers.

In Section 5 we show the distribution set for any mutation equivalence class of infinite mutation type is not continuous. In fact, we have more information in this case as given in the following corollary finally:
\\
{\bf Corollary 5.7.}\;
  Let $Q$ be a connected quiver with $n\geqslant 3$ vertices, then the following are equivalent:
  \begin{itemize}
    \item $Q$ is mutation-infinite.
    \item There is no upper bound on the distribution set for $Mut[Q]$.
    \item There is a quiver in Mut$[Q]$ having lager than $2$ arrows between certain two vertices.
    \item For any two vertices of $Q$, the maximal number of arrows between this two vertices of a quiver in $Mut[Q]$ is infinite.
  \end{itemize}

\section{Preliminaries}
\subsection{Quivers of finite mutation type}
A quiver without loops or 2-cycles is said to be a \textbf{cluster quiver}. Let $Q_0$ be the set of vertices of $Q$, $Q_1$ be the set of arrows of $Q$, we denote by $t$ the number of arrows of a cluster quiver $Q$, which is $|Q_1|$.

Vertices in a cluster quiver are designated as \textbf{frozen} or \textbf{mutable}. We will always assume that there is no arrow between pairs of frozen vertices. The \textbf{mutation} of a cluster quiver $Q$ at the vertex $k$ is defined as follows:
\begin{itemize}
  \item Reverse all the arrows incident to $k$;
  \item If $Q$ has $m$ arrows $i \to k$, $n$ arrows $k \to j$ ,and $s$ arrows $j \to i$ (or $i \to j$), then there are $mn-s$ (or $mn+s$) arrows $i \to j$ (Turn arrows to the opposite direction if $mn-s$ is negative). Unless both $i$ and $j$ are frozen, in which case do nothing;
  \item Keep the other arrows of $Q$ unchanged.
\end{itemize}
\begin{Proposition}[\cite{FWZ}]
  (a) Let $k$ and $l$ be two mutable vertices which have no arrows between them. Then mutations at $k$ and $l$ commute with each other;
  (b) Mutation is an involution.
\end{Proposition}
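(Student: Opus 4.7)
My plan is to verify both parts by direct computation with the mutation rule, recast in the signed-arrow formulation: let $b_{ij} \in \mathbb{Z}$ denote the net arrow count (positive means $b_{ij}$ arrows $i \to j$, so $b_{ji} = -b_{ij}$). Translating the three-bullet definition, mutation at $k$ sends $b_{ij}$ to $b'_{ij} = -b_{ij}$ when $i = k$ or $j = k$, and to $b'_{ij} = b_{ij} + \tfrac{1}{2}(|b_{ik}|\, b_{kj} + b_{ik}\, |b_{kj}|)$ otherwise. A quick case split on the signs of $b_{ik}$ and $b_{kj}$ confirms this matches the $mn \pm s$ prescription of the paper, and the frozen-frozen exception is handled by simply not applying the correction in that case (both orderings and both iterations treat it uniformly as ``do nothing'').

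For part (b), I would fix $k$ and apply the rule twice. Entries with $i = k$ or $j = k$ flip sign twice, returning to their original values. For $i, j \neq k$, substitute $b'_{ik} = -b_{ik}$ and $b'_{kj} = -b_{kj}$ into the correction on the second pass:
\[ \tfrac{1}{2}\bigl(|{-}b_{ik}|(-b_{kj}) + (-b_{ik})|{-}b_{kj}|\bigr) = -\tfrac{1}{2}\bigl(|b_{ik}| b_{kj} + b_{ik}|b_{kj}|\bigr), \]
so the second correction exactly cancels the first and $b''_{ij} = b_{ij}$.

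For part (a), I would exploit $b_{kl} = b_{lk} = 0$ to argue that the two mutations act on disjoint pieces of data. The $k$-correction to any entry $b_{ij}$ vanishes as soon as either $b_{ik}$ or $b_{kj}$ is zero, so mutation at $k$ leaves the entries $b_{il}$, $b_{lj}$, and in particular $b_{kl}, b_{lk}$, unchanged; symmetrically for mutation at $l$. Consequently, for $i, j \notin \{k, l\}$ the $k$-correction depends only on inputs untouched by mutation at $l$, and vice versa, so the two corrections add in either order and produce the same final quiver. Arrows incident to $k$ or $l$ (but not both) flip sign exactly once regardless of order, and arrows between $k$ and $l$ remain zero in both orders.

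The only delicate step is the sign verification in (b); dispatching it on the sign of the product $b_{ik} b_{kj}$ (equivalently, on whether the path $i \to k \to j$ is coherently oriented) reduces the check to two short cases. Once (b) is in hand, (a) becomes a formal consequence of the \emph{locality} of the correction term under the hypothesis $b_{kl} = 0$; no further case analysis on sign patterns is needed.
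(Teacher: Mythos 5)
Your proof is correct: the translation of the arrow-counting rule into the signed quantities $b_{ij}$ is accurate, the sign cancellation in (b) is exactly right, and the locality observation that $b_{kl}=0$ kills every correction term touching row or column $l$ under $\mu_k$ (and vice versa) is precisely what makes (a) work. The paper itself offers no proof --- it simply cites \cite{FWZ} --- and your argument is essentially the standard matrix-mutation verification given there, so there is nothing to reconcile.
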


Two cluster quivers $Q$ and $Q'$ are called \textbf{mutation-equivalent} if $Q$ can be obtained from $Q'$ by a sequence of mutations. The \textbf{mutation equivalence class} $Mut[Q]$ of a cluster quiver $Q$ is a set of cluster quivers (up to isomorphism) which are mutation equivalent to $Q$.

Let $a,b$ be positive integers satisfying $a\leq b$, write $[a,b]$ for $\{a,a+1,\dots,b\}$.

\begin{Definition}
 We call $W=\{t|t=|Q'_1|, where\ Q'\in Mut[Q]\}$ the \textbf{distribution set of the numbers of arrows} for ${Mut[Q]}$, briefly, the distribution set for ${Mut[Q]}$. Then the distribution set $W$ for ${Mut[Q]}$ is said to be \textbf{continuous} if $W=[a,b]$ where $a$ and $b$ are positive integers.
\label{continuous}
\end{Definition}
A cluster quiver $Q$ is said to be \textbf{of finite mutation type} if $Mut[Q]$ is finite. Felikson, Shapiro, and Tumarkin given a complete classification of cluster quivers of finite mutation type.

\begin{Theorem}\cite{FST1}
A cluster quiver $Q$ with $n$ vertices is of finite mutation type if and only if:
\begin{enumerate}[(a)]
  \item $n\leq2$
  \item $Q$ arises from a 2-dimensional Riemann surface$(n\geq3)$
  \item $Q$ is mutation equivalent to one of the $11$ exceptional types (see Figure \ref{exceptional}):
  $$E_6,\ E_7,\ E_8,\ \widetilde{E}_6,\ \widetilde{E}_7,\ \widetilde{E}_8,\ E_6^{(1,1)},\ E_7^{(1,1)},\ E_8^{(1,1)},\ X_6,\ X_7$$
\end{enumerate}
\begin{figure}[H]
\centering
\includegraphics[width=12.5cm]{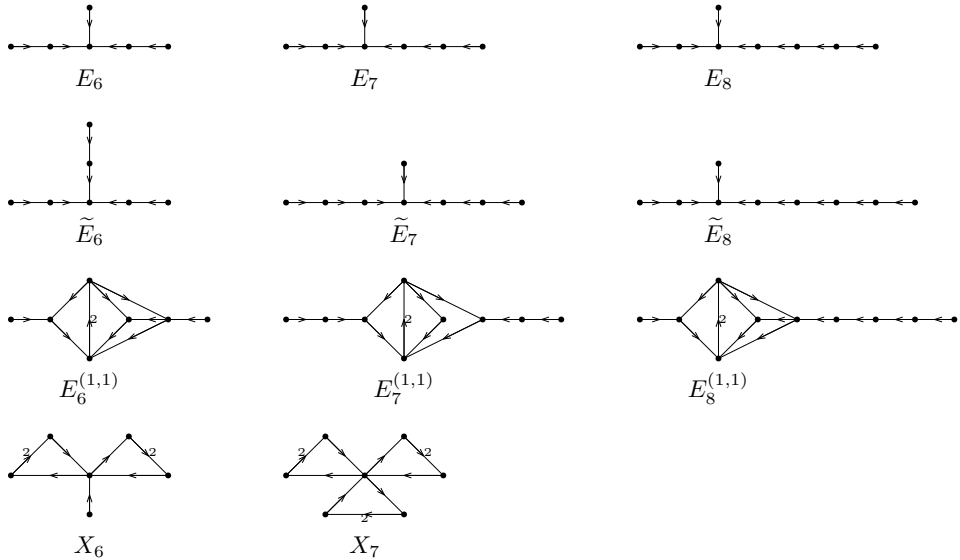}
\caption{Eleven exceptional types}
\label{exceptional}
\end{figure}
\end{Theorem}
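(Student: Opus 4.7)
The plan is to prove both directions separately. For the ``if'' direction, it suffices to verify that each listed class has a finite mutation equivalence class. When $n\le 2$, mutation at the unique mutable vertex only reverses arrows, so up to isomorphism there are only finitely many quivers on at most two vertices with a fixed arrow multiplicity. For $Q$ arising from a surface $\mathbf{S}$ with marked points, I would invoke the bijection between tagged triangulations of $\mathbf{S}$ and quivers in $Mut[Q]$, under which flips correspond to mutations at mutable vertices; since a surface with a fixed finite set of marked points admits only finitely many tagged triangulations, $Mut[Q]$ is finite. For each of the eleven exceptional types, finiteness is verified by an explicit (often computer-assisted) enumeration of $Mut[Q]$.

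For the ``only if'' direction I would use a local-to-global strategy. The key observation is that every induced subquiver of a mutation-finite quiver is itself mutation-finite, because mutation restricted to the vertex set of the subquiver remains well-defined. Hence if $Q$ is mutation-finite, then so is every one of its $3$-, $4$- and $5$-vertex subquivers. I would first classify all mutation-finite quivers on $3$ vertices by direct computation: these are the acyclic ones together with the cyclic triangles whose weights $(a,b,c)$ satisfy a controlled inequality, because a cyclic triangle with large weights produces an unbounded mutation orbit. This step sharply restricts the possible arrow multiplicities inside any mutation-finite $Q$.

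The bulk of the work is then to show that any $Q$ on $n\ge 3$ vertices whose $3$-vertex subquivers obey these constraints either admits a block decomposition into ``puzzle pieces'' (corresponding to triangles and once-punctured monogons of a triangulated surface) or coincides with one of the eleven exceptional patterns. Following the approach of \cite{FST1}, I would build the surface triangulation inductively: start from a maximal collection of triangle blocks realized as $3$-vertex subquivers, glue them along shared vertices and arrows in the unique way compatible with the ambient mutations of $Q$, and then verify that the resulting oriented surface with marked points has $Q$ as its adjacency quiver. The exhaustive analysis of the finitely many $4$- and $5$-vertex configurations that obstruct such a gluing produces precisely the list $E_n,\ \widetilde E_n,\ E_n^{(1,1)},\ X_6,\ X_7$.

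The hardest part of the argument will be this final combinatorial enumeration, namely ruling out any ``sporadic'' mutation-finite quiver beyond the eleven listed. While verifying that each exceptional type is mutation-finite can be done mechanically, showing that no other sporadic example exists requires a careful induction on the number of vertices together with a delicate analysis of how cyclic $3$-vertex subquivers with non-unit weights propagate under mutation through larger quivers. This is where the original argument in \cite{FST1} is most technical, and it is what secures the clean trichotomy (a)--(c).
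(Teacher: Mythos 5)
This statement is not proved in the paper at all: it is quoted verbatim from \cite{FST1} as an external classification result, so there is no internal proof to compare your attempt against. Judged on its own terms, your proposal is a reasonable high-level description of the strategy of Felikson--Shapiro--Tumarkin, and the ``if'' direction you give is essentially complete (finiteness for $n\le 2$, finiteness of the set of tagged triangulations of a fixed marked surface, and machine verification for the eleven exceptional classes). The observation that a full subquiver of a mutation-finite quiver is mutation-finite is also correct and is exactly the mechanism behind Lemma \ref{sub} of the paper.

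The genuine gap is in the ``only if'' direction, where everything of substance is named rather than proved. First, the rank-$3$ classification is only asserted; the actual statement needed is that in a mutation-finite connected quiver on $\ge 3$ vertices every arrow has multiplicity at most $2$ (Lemma \ref{mutation-finite}), and proving it already requires the growth argument for cyclic triangles that you sketch only in one direction. Second, and more seriously, the reduction ``either $Q$ is block-decomposable into puzzle pieces or $Q$ is one of the eleven exceptional quivers'' is the entire content of the theorem, and your plan to detect the obstruction via $4$- and $5$-vertex subconfigurations cannot work as stated: the exceptional quivers $X_6$, $X_7$, $E_8^{(1,1)}$, etc.\ have between $6$ and $10$ vertices, and every proper subquiver of an exceptional mutation-finite quiver can itself be mutation-finite and block-decomposable, so no bounded-size local obstruction of the kind you describe separates them from surface quivers. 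The actual argument in \cite{FST1} proceeds by a computer-assisted verification that every mutation-finite quiver on exactly $10$ vertices is block-decomposable, followed by an induction showing that for $n\ge 10$ block-decomposability of all order-$(n-1)$ subquivers forces block-decomposability of the whole quiver; the sporadic classes are then confined to ranks $\le 9$ and enumerated exhaustively. Without that base case and that inductive gluing lemma, the trichotomy is not established, so the proposal should be regarded as an outline of the known proof rather than a proof.
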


Thanks to the classification in this theorem, we will discuss the distribution set for $Mut[Q]$ and $Mut[\tilde{Q}]$ respectively due to  the definitions of exchange cluster quivers $Q$ and extended exchange cluster quiver $\tilde{Q}$ in Section 1.

\subsection{Cluster quivers arising from surfaces}
We now recall how extended exchange cluster quivers and exchange cluster quivers can be associated with triangulations of surfaces. Because the distribution set of the numbers of arrows are the same for the ordinary triangulated surface and the tagged triangulated surface, we only consider the ordinary triangulations for simplicity. We mainly follow the works of Fomin, Shapiro and Thurston \cite{FS}.

A \textbf{marked surface} is a pair $(\mathbf{S},\mathbf{M})$ ($\mathbf{S}$ for abbreviation), where $\mathbf{S}$ is a connected oriented 2-dimensional Riemann surface and $\mathbf{M}$ is a finite set of marked points in the closure of $\mathbf{S}$. Marked points in the interior of $\mathbf{S}$ are called \textbf{punctures}, marked points on the boundary of $\mathbf{S}$ are called \textbf{boundary points}. Each boundary component contains at least one boundary points. The \textbf{degree} of a marked point is defined as the sum of the numbers of different edges in the underlying graph which incident with the marked point.

In this paper, we require the same as that in \cite{FS} that $\mathbf{S}$ is none of the following:
\begin{itemize}
  \item a sphere with one, two or three punctures;
  \item an un-punctured or once-punctured monogon;
  \item an un-punctured digon;
  \item an un-punctured triangle.
\end{itemize}

An \textbf{arc} $\gamma$ in $\mathbf{S}$ is a curve, considered up to isotopy, such that:
\begin{itemize}
  \item the endpoints of $\gamma$ are in M;
  \item $\gamma$ does not cross itself, except that its endpoints may coincide;
  \item except the endpoints, $\gamma$ is disjoint from the boundary of S;
  \item $\gamma$ does not cut out a monogon or a digon.
\end{itemize}

Two arcs are called \textbf{compatible} if they do not intersect in the interior of $\mathbf{S}$. A \textbf{triangulation} $T$ of $\mathbf{S}$ is a maximal collection of distinct pairwise compatible arcs. Given a triangulated surface, the number $n$ of arcs can be calculated by the following formula.

\begin{Proposition}\cite{FG}
Each triangulation consists of
$$n=6g+3b+3p+c-6$$
arcs, where $g$ is the genus of $\mathbf{S}$, $b$ is the number of boundary components, $p$ is the number of punctures, and $c$ is the number of boundary points.
\end{Proposition}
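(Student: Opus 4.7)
The plan is to apply Euler's formula to the CW-structure that the triangulation $T$ induces on $\mathbf{S}$, together with a double count of face--side incidences. Since $\mathbf{S}$ is a compact orientable surface of genus $g$ with $b$ boundary components, its Euler characteristic is $\chi(\mathbf{S}) = 2 - 2g - b$, and the goal is to express $V - E + F$ in terms of $n,g,b,p,c$ and equate.

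First I would enumerate the vertices, edges, and faces. Every marked point is a vertex of $T$: boundary points are vertices by definition, and in any triangulation each puncture must be an endpoint of some arc (otherwise the set of arcs could be extended, contradicting maximality), so $V = p + c$. The edges of $T$ consist of the $n$ arcs together with the boundary segments of $\mathbf{S}$; on each boundary component the number of boundary segments equals the number of boundary points lying on it, so there are $c$ boundary segments in total, giving $E = n + c$. Every face is a triangle, and a double count of face--side incidences yields $3F = 2n + c$, because an arc is shared by two triangle-sides while a boundary segment lies on exactly one triangle-side. In a self-folded triangle, the interior arc occupies two of the three sides of the same triangle and so still contributes $2$ to the count $2n$, leaving the identity intact.

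Substituting into Euler's formula gives
$$(p + c) \;-\; (n + c) \;+\; \frac{2n + c}{3} \;=\; 2 - 2g - b,$$
and clearing denominators and solving for $n$ produces $n = 6g + 3b + 3p + c - 6$, as desired.

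The main subtlety I expect is the careful treatment of self-folded triangles in the face--side count, together with checking that every puncture is genuinely a vertex of any triangulation; this is where the standing hypotheses excluding the degenerate marked surfaces listed earlier are used, since those are precisely the cases in which a maximal collection of compatible arcs can fail to exhaust all marked points or to triangulate $\mathbf{S}$ into genuine triangles. Once these combinatorial subtleties are dispatched, the formula follows by pure algebra.
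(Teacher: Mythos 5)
The paper does not prove this statement at all: it is quoted directly from Fock--Goncharov \cite{FG} without argument, so there is no internal proof to compare yours against. Your Euler-characteristic derivation is the standard one and is correct. The bookkeeping checks out: $V=p+c$ (every puncture is an endpoint of some arc by maximality, given the excluded degenerate surfaces), $E=n+c$ (the $c$ marked boundary points cut the boundary circles into exactly $c$ segments), and the face--side double count $3F=2n+c$ survives the presence of self-folded triangles because the folded arc still contributes $2$ incidences, just both to the same triangle; two triangles sharing two sides also causes no harm since the count is per incidence. Substituting into $V-E+F=2-2g-b$ and clearing denominators gives $n=6g+3b+3p+c-6$ as you state. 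The one point worth making explicit if this were written out in full is that the ideal triangles really are disks (so that the cell structure computes $\chi(\mathbf{S})$), which again uses the standing exclusions on $(\mathbf{S},\mathbf{M})$; you gesture at this correctly in your final paragraph.
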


The arcs of a triangulation separate the surface into \textbf{ideal triangles}. We allow \textbf{self-folded} triangles (see in Figure.\ref{self-folded}). We also allow for a possibility that two triangles share more than one side.
\begin{figure}[H]
\centering
\includegraphics[height=1.5cm]{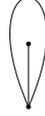}
\caption{a self-folded triangle}
\label{self-folded}
\end{figure}

Let $T$ be a triangulation of a surface $\mathbf{S}$, an extended exchange cluster quiver is said to be \textbf{associated with} $T$ denoted as $\tilde{Q}_T$ as follows:
\begin{itemize}
  \item mutable vertices correspond to arcs and frozen vertices correspond to sides on the boundary;
  \item if two arcs, or an arc and a side on the boundary, belongs to the same triangle, there is an arrow between the corresponding vertices in the clockwise of the boundary of the triangle. Note that there is no arrows between two frozen vertices;
  \item for an arc $p$ which is the one in the loop $q$ of a self-fold triangle, there is an arrow $i \to p$ (or $p \to j$) in $\tilde{Q}_T$ when there exists an arrow $i \to q$ (or $q \to j$);
  \item if there are two triangles sharing two sides in $T$, delete the 2-cycle in $\tilde{Q}_T$ of which arrows are between the corresponding vertices of the common sides of triangles.
\end{itemize}
And an exchange cluster quiver associated with $T$ denoted by $Q_T$ is the full sub-quiver of $\tilde{Q}_T$ consisting of mutable vertices.
Then the associated extended exchange (resp. exchange) cluster quiver $\tilde{Q}_T$ (resp. $Q_T$) is said to \textbf{arise from} a surface $\mathbf{S}$. Let $\tilde{t}$ (resp. $t$) be the number of arrows of $\tilde{Q}_T$ (resp. $Q_T$), then the triangulation $T$ is said to be \textbf{equipped with} $\tilde{t}$ (resp. $t$) arrows.

A \textbf{flip} is a transformation of a triangulation $T$ by replacing an arc with a unique different arc and leaving the other arcs unchanged to create a new triangulation $T'$.
\begin{Proposition}\cite{FS}
Let $\tilde{Q}_T$ and $\tilde{Q}_{T'}$ be two extended exchange cluster quivers associated with $T$ and $T'$ respectively. If $T'$ is obtained from $T$ by a flip at the arc $k$, then $\tilde{Q}_{T'}$ is obtained from $\tilde{Q}_T$ by a mutation at the vertex $k$.
\label{f-m}
\end{Proposition}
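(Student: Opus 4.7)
The plan is to reduce the global assertion to a finite local check around the flipped arc $k$. Any ideal triangle of $T$ that does not have $k$ as a side is unchanged by the flip and so contributes identical arrows to $\tilde{Q}_T$ and $\tilde{Q}_{T'}$; on the mutation side, the third bullet of the mutation rule preserves all arrows not incident to $k$. Hence it suffices to verify agreement of the arrow multiplicities between pairs of vertices $\{k,x\}$ and $\{x,y\}$ where $x,y$ occur in one of the (at most two) triangles of $T$ containing $k$.

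First I would handle the generic case, where both triangles containing $k$ are non-self-folded, no two of their sides are identified, and none of their other sides lies on the boundary. Label the sides of the resulting quadrilateral $a,b,c,d$ in cyclic order, so that $T$ contains the triangles $(a,b,k)$ and $(c,d,k)$, and $T'$ contains $(a,d,k')$ and $(b,c,k')$. The clockwise rule yields the incoming/outgoing arrows $a\to k$, $k\to b$, $c\to k$, $k\to d$ in $\tilde{Q}_T$; reversing these gives exactly the arrows at $k'$ in $\tilde{Q}_{T'}$, which matches the first bullet of the mutation rule. For each of the pairs $\{a,d\},\{b,c\},\{a,c\},\{b,d\}$ I would tabulate the net contribution before and after the flip and check that the difference coincides with the formula $mn-s$ (or $mn+s$) applied to length-two paths through $k$. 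Here the ``$-s$'' term is captured precisely by any arrow between the two vertices produced by triangles not containing $k$, and so survives unchanged on both sides of the count.

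The main obstacle is the proliferation of degenerate configurations, which is also why this proposition occupies nontrivial space in \cite{FS}: (i) one of the two triangles at $k$ is self-folded, in which case the loop/folded-arc identification used in the definition of $\tilde{Q}_T$ must be unwound and the flip interpreted in the tagged-arc framework; (ii) the two triangles at $k$ share an additional side besides $k$, so that the 2-cycle deletion convention interacts with the mutation composition; and (iii) one or more of $a,b,c,d$ is a boundary segment, so the no-arrow-between-frozens convention on the quiver side must match the boundary bookkeeping on the surface side. In each case one must verify that the local combinatorial conventions on the triangulation (clockwise orientation, 2-cycle cancellation, loop-arrow transfer, frozen--frozen suppression) reproduce the same local effect as the quiver mutation rule. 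Once the generic case is settled, each degenerate configuration reduces to a small explicit picture checkable by hand, so the heart of the argument is the careful enumeration of these local models rather than any additional conceptual input.
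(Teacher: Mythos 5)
The paper offers no proof of this proposition: it is quoted verbatim from \cite{FS} (it is essentially Fomin--Shapiro--Thurston's result on the compatibility of flips with matrix mutation), so there is no in-paper argument to compare against. Your plan is the standard one used there --- reduce to a local check on the one or two triangles containing $k$, verify the generic quadrilateral picture, then enumerate the degenerate configurations --- and the reduction step (triangles not containing $k$ are untouched by the flip, arrows not incident to $k$ are untouched by the mutation, and the residual arrow between $x$ and $y$ coming from outside triangles plays the role of $s$ on both sides) is correctly identified.

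Two points to tighten. First, in the generic case your list of pairs to check, $\{a,d\},\{b,c\},\{a,c\},\{b,d\}$, omits $\{a,b\}$ and $\{c,d\}$, which are precisely the pairs where the interesting cancellation happens: the triangle $(a,b,k)$ of $T$ contributes an arrow $b\to a$, the composite path $a\to k\to b$ contributes $a\to b$ under mutation, and these must cancel to match the fact that $a$ and $b$ no longer share a triangle in $T'$. The pairs $\{a,c\}$ and $\{b,d\}$ you do list are the ones where nothing happens on either side (no length-two path through $k$, no common triangle before or after), so as written the checklist skips the only pairs that exercise the $mn-s$ rule with $s\neq 0$. Second, the degenerate cases (self-folded neighbours, doubled sides with the 2-cycle deletion convention, boundary sides with the frozen--frozen suppression) are where essentially all of the length of the original proof lives; naming them is the right move for a proposal, but note that for an ordinary ideal triangulation the radius of a self-folded triangle is not flippable at all, so the self-folded case enters only through $k$ being adjacent to such a triangle, not through $k$ being its inner arc --- no tagged-arc machinery is needed for the statement as the paper uses it, since the paper restricts to ordinary triangulations throughout.
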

\begin{Proposition}\cite{FS}
For an extended exchange cluster quiver $\tilde{Q}_T$ arising from a surface $\mathbf{S}$, the mutation equivalence class $Mut[\tilde{Q}_T]$ depends only on $\mathbf{S}$ but not on the choice of an ideal triangulation.
\label{prop-FS}
\end{Proposition}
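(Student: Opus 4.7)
The plan is to reduce Proposition \ref{prop-FS} to the flip-connectivity of triangulations and then invoke Proposition \ref{f-m}. First I would fix two ideal triangulations $T$ and $T'$ of the marked surface $\mathbf{S}$, and exhibit a finite sequence of flips $T=T_0\to T_1\to\cdots\to T_k=T'$ in which each $T_{i+1}$ differs from $T_i$ by replacing a single arc. Once such a sequence is produced, Proposition \ref{f-m} turns each flip $T_i\to T_{i+1}$ into a mutation of the associated extended exchange quiver $\tilde{Q}_{T_i}\to\tilde{Q}_{T_{i+1}}$ at the corresponding mutable vertex, and composing these mutations shows $\tilde{Q}_{T'}\in Mut[\tilde{Q}_T]$. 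Running the argument symmetrically in $T$ and $T'$ yields $Mut[\tilde{Q}_T]=Mut[\tilde{Q}_{T'}]$, so the mutation class depends only on $\mathbf{S}$.

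The heart of the argument is therefore the connectivity of the flip graph of $\mathbf{S}$. I would establish this by induction on the number of arcs that the two triangulations share: if $T$ and $T'$ have a common arc $\gamma$, cut $\mathbf{S}$ along $\gamma$ and apply the inductive hypothesis to the (possibly disconnected) pieces obtained; if they share no arc, choose an arc $\gamma'\in T'$ that crosses $T$ a minimal positive number of times, locate an outermost intersection of $\gamma'$ with $T$, and flip the corresponding arc of $T$ to strictly reduce the total number of crossings with $T'$. Iterating produces an arc in common and restarts the induction. This is the classical strategy used for polygons and adapts to a general marked surface.

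The genuine obstacle is the presence of self-folded triangles: the interior arc of a self-folded triangle cannot be flipped in the ordinary (untagged) sense, so the ordinary flip graph is not connected in full generality. To handle this I would follow Fomin--Shapiro--Thurston and pass to the tagged-triangulation model, in which every arc admits a flip and the tagged flip graph is known to be connected; since the paper has already observed that the distribution set is the same for ordinary and tagged triangulations, one may lift the conclusion from the tagged setting back to the ordinary one. A concrete alternative, when one wishes to stay within ordinary triangulations, is to note that an arc trapped inside a self-folded triangle can always be liberated by first flipping the enclosing loop, performing the desired move, and then flipping back; each of these steps corresponds under Proposition \ref{f-m} to a mutation of $\tilde{Q}_T$, so the mutation-equivalence conclusion persists.
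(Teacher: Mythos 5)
Your first paragraph is exactly the route the paper intends: the paper states this proposition as a citation to Fomin--Shapiro--Thurston, and immediately nearby it separately cites the connectivity of the flip graph $\mathbf{E^\circ(S,M)}$; combining that connectivity with Proposition \ref{f-m} (flip $=$ mutation) is the whole argument. So your reduction is correct and is the same as the paper's implicit one.

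The problems are in your second and third paragraphs, where you try to supply the connectivity proof yourself. First, your stated obstacle is not an obstacle: the fact that the radius of a self-folded triangle is unflippable breaks the $n$-regularity of $\mathbf{E^\circ(S,M)}$ (which is why FST later introduce tagged arcs), but it does \emph{not} make the ordinary flip graph disconnected --- $\mathbf{E^\circ(S,M)}$ is connected, and this is precisely the proposition the paper cites from \cite{FS}. Second, the outermost-crossing induction is the classical argument for a polygon, but it does not transfer to a general marked surface as stated: the arc of $T$ you are led to flip may itself be an unflippable radius, a single flip need not strictly decrease the intersection number with $\gamma'$ on a surface with genus or punctures, and cutting along a common arc can produce excluded pieces (unpunctured monogons, digons, triangles); this is why FST appeal to Harer/Hatcher/Mosher rather than to an elementary surgery induction. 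Third, the detour through tagged triangulations is circular in this development: in FST the connectivity of the tagged exchange graph is \emph{deduced from} the connectivity of the ordinary flip graph, not the other way around. Your ``flip the enclosing loop first'' remark is the correct local observation about why radii do not obstruct connectivity, but it is an ingredient of the cited theorem, not a substitute for it; the clean fix is simply to take the connectivity of $\mathbf{E^\circ(S,M)}$ as the cited input, after which your first paragraph already completes the proof.
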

Obviously, the propositions above hold for exchange cluster quivers.
Proposition \ref{prop-FS} shows that the mutation equivalence class can be described by the triangulated surface.
Then the distribution set $W$ for $Mut[\tilde{Q}_T]$ (resp. $Mut[Q_T]$) where $\tilde{Q}_T$ (resp. $Q_T$) arises from a surface $\mathbf{S}$ is denoted as $\tilde{W}_\mathbf{S}$ (resp. $W_\mathbf{S}$) and called the distribution set for $\mathbf{S}$ with respect to $\tilde{Q}$ (resp. $Q$) for short.

Let $\mathbf{E^\circ(S,M)}$ denote a graph of which vertices are ideal triangulations of $\mathbf{(S,M)}$, and edges correspond to flips.
\begin{Proposition}\cite{FS}
The graph $\mathbf{E^\circ(S,M)}$ is connected.
\end{Proposition}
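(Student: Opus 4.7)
The plan is to prove connectedness by induction on the complexity of $\mathbf{(S,M)}$, measured by the number $n = 6g + 3b + 3p + c - 6$ of arcs in any triangulation (with ties broken lexicographically on $(g,b,p,c)$). In the base case of an unpunctured polygon ($g=0$, $b=1$, $p=0$, $c \geq 4$), the flip graph is the $1$-skeleton of an associahedron, whose connectedness is classical: fix a boundary vertex $v$ and show by induction on $c$ that any triangulation can be reduced to the ``fan'' triangulation at $v$ by repeatedly flipping a diagonal not incident to $v$ that lies in a quadrilateral having $v$ as a vertex.

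For the inductive step, given two triangulations $T, T'$ of $\mathbf{(S,M)}$, consider the total geometric intersection number $I(T,T') = \sum_{\gamma \in T,\, \delta \in T'} i(\gamma,\delta)$. If $I(T,T') = 0$, then $T$ and $T'$ share at least one arc $\alpha$; cut $\mathbf{(S,M)}$ along $\alpha$ to obtain one or two marked surfaces of strictly smaller complexity. The restrictions of $T$ and $T'$ give triangulations of the pieces, which by induction are joined by flips, and these flips lift to flips in $\mathbf{E^\circ(S,M)}$. If $I(T,T') > 0$, the key claim is that there exists a flip on $T$ at some arc $\gamma$, producing $\gamma'$, such that $I\bigl((T \setminus \{\gamma\}) \cup \{\gamma'\},\, T'\bigr) < I(T,T')$; this is proved by picking a triangle of $T$ one of whose sides is crossed by an arc of $T'$, and flipping an innermost such side so that the number of intersection points strictly decreases.

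The main obstacle is the presence of self-folded triangles in ordinary triangulations: the arc enclosed inside a self-folded triangle cannot itself be flipped within $\mathbf{E^\circ(S,M)}$, which is precisely the motivation for Fomin, Shapiro and Thurston to introduce tagged triangulations. To work entirely in the ordinary flip graph, one observes that whenever the flip dictated by the key claim would act on such an enclosed arc, one can first flip the enclosing loop (which \emph{is} flippable) to destroy the self-folded configuration, and then carry out the intended flip. The small sporadic surfaces (the once-punctured digon, the twice-punctured monogon, the once-punctured triangle, the four-punctured sphere, etc.) serve as the bases for the induction in the punctured case and are handled by direct inspection of their finite flip graphs. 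This self-folded-triangle bookkeeping is the most delicate part of the argument and is the reason \cite{FS} proves the statement through a sequence of intermediate lemmas rather than in one stroke.
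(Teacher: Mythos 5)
The paper does not prove this statement at all: it is quoted from \cite{FS} (Fomin--Shapiro--Thurston, who in turn derive it from work of Harer, Hatcher and Mosher), so there is no in-paper argument to compare yours against. Judged on its own, your sketch follows the standard line of attack, but it has genuine gaps at exactly the points where the known proofs do real work. The central one is your ``key claim'' that when $I(T,T')>0$ there is always a single flip on $T$ strictly decreasing $I(T,T')$. In an unpunctured polygon this is easy (the new diagonal shares an endpoint with the crossing arc and hence misses it), but on a general marked surface the arc $\gamma'$ produced by flipping an ``innermost'' crossed side can wind around punctures or handles and intersect $T'$ \emph{more} than $\gamma$ did; nothing in your sketch rules this out, and establishing a monotone descent (or replacing it by a different potential function) is precisely the content of Mosher's combinatorial proof and of Hatcher's contractibility-of-the-arc-complex argument. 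Your self-folded-triangle workaround compounds the problem: flipping the enclosing loop first is an extra flip that can increase $I(T,T')$, so even if the key claim held for flippable arcs, the termination of your descent is no longer guaranteed without an additional argument that you do not supply.

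Two smaller points. First, the case $I(T,T')=0$ cannot occur for $T\neq T'$: if no arc of $T'$ crosses any arc of $T$, then $T\cup T'$ is a compatible collection containing the maximal collection $T$, so $T'\subseteq T$ and hence $T'=T$. Your ``cut along a shared arc'' step therefore never fires as stated; the usual way to make the subsurface induction work is to first prove that any $T$ can be flipped to a triangulation containing one \emph{fixed} arc $\alpha$ (by descent on $i(\alpha,T)$ alone), and only then cut along $\alpha$. Second, your proposed base cases are not all accessible to ``direct inspection of their finite flip graphs'': the four-punctured sphere (and most of the other sporadic punctured surfaces) has infinitely many isotopy classes of arcs and hence an infinite flip graph, since only the number of triangulations up to homeomorphism is finite. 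So the skeleton of your argument is the right one, but the decreasing-intersection lemma, the self-folded bookkeeping, and the base cases all need substantially more than is written here.
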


According to Remark 4.2 in \cite{FS}, an arbitrary ideal triangulation can be obtained by gluing a number of "puzzle pieces" together (shown below) in a specific way:
\begin{figure}[H]
\centering
\includegraphics[height=2cm]{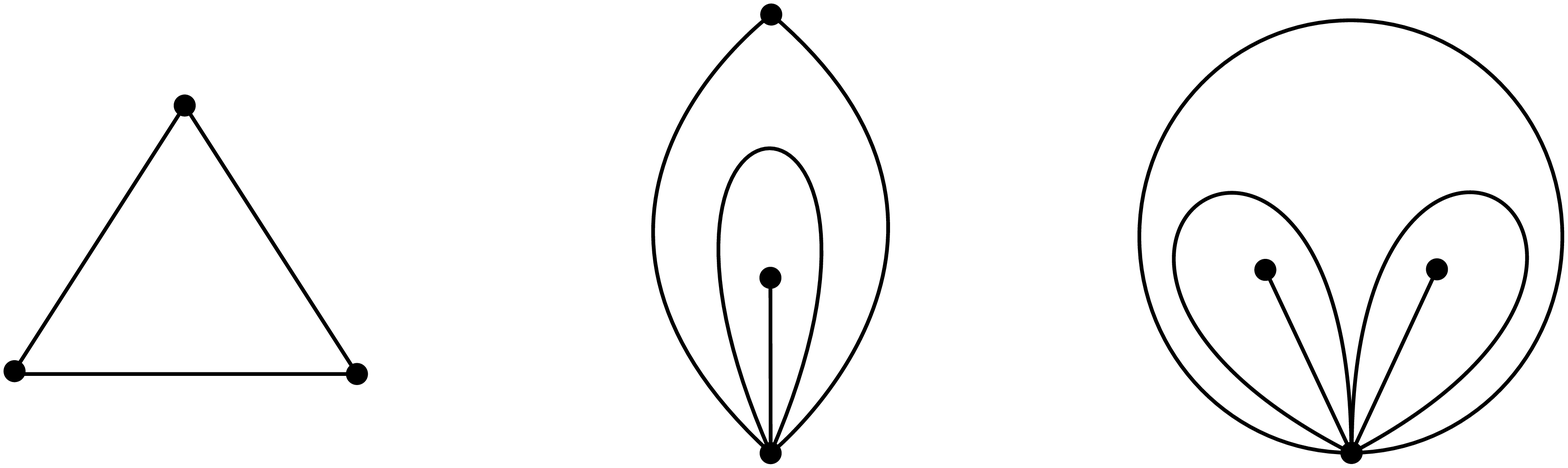}
\label{3puzzle}
\end{figure}
\begin{itemize}
  \item Take several puzzle pieces;
  \item Take a partial matching of the exposed (exterior) sides of these puzzle pieces, never matching two sides of the same puzzle piece. In order to obtain a connected surface, we need to ensure that any two puzzle pieces in the collection are connected via matched pairs;
  \item Glue the puzzle pieces along the matched sides, making sure that the orientations match.
\end{itemize}

While a triangulation of the forth-punctured sphere can not be obtained by this construction, which is obtained by gluing three self-folded triangles to respective sides of an ordinary triangle.

In order to analyse the distribution of the numbers of arrows, we split those three puzzle pieces into nine specific pieces shown in Figure \ref{ori+tri} by defining the side as the \textbf{matched side}(labeled by $\circ$) or the \textbf{frozen side}(labeled by $\square$), in which the dotted line represents the sphere.
The frozen side corresponds to the side on the boundary, which can not be glued.
The matched side corresponds to the common side of two ideal triangles.
The unlabeled sides in the interior of the piece $\triangle_i\ (i=4,\cdots,9)$ corresponds to arcs in the triangulation.

Then any triangulations can be obtained from nine pieces.
\begin{figure}[H]
\centering
\includegraphics[width=12cm]{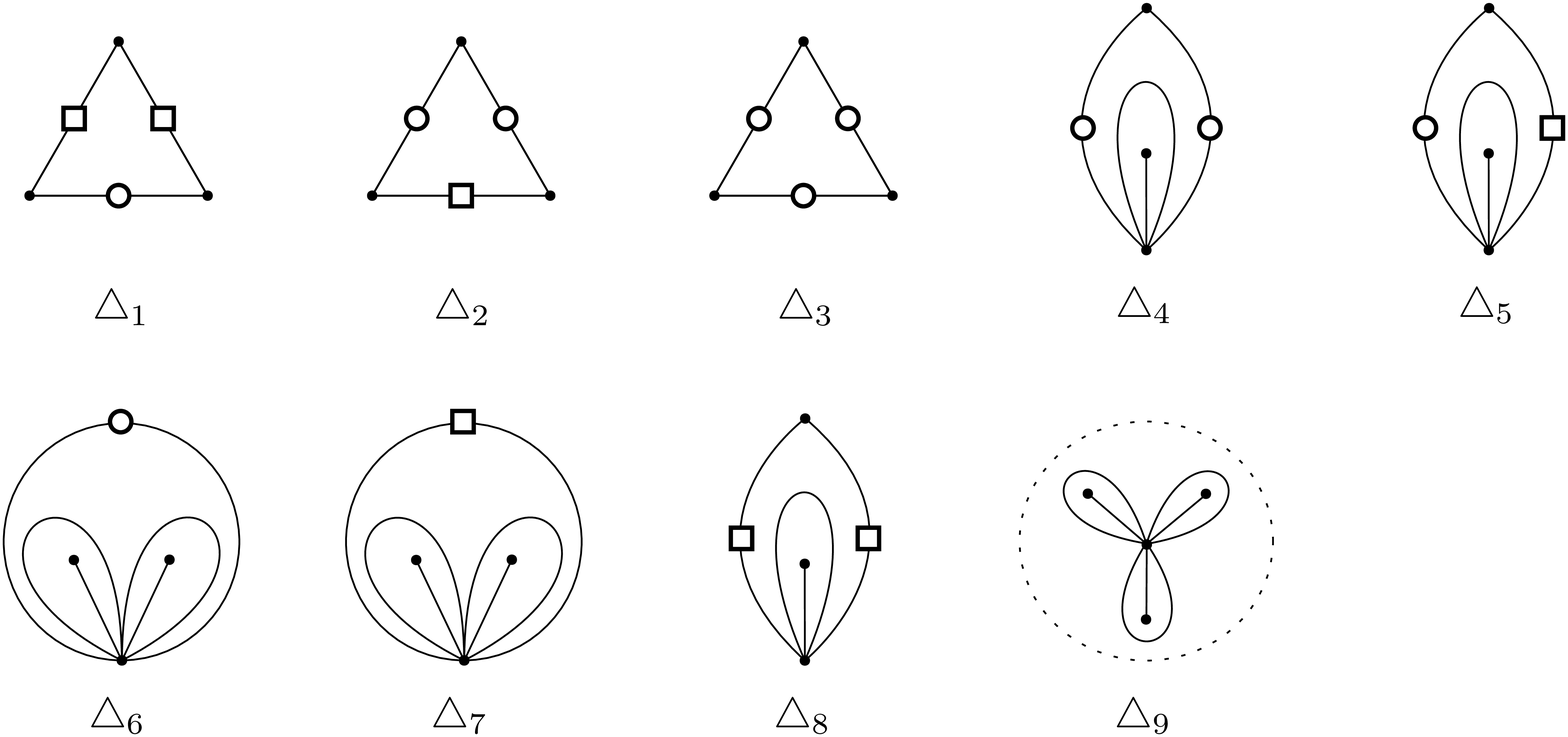}
\caption{Nine pieces}
\label{ori+tri}
\end{figure}

\section{The proof of Theorem \ref{main} with respect to exchange cluster quivers}
In this section, we only concern about exchange cluster quivers.
\subsection{Two lemmas}
We first discuss the distribution sets for several special surfaces. By the enumeration method, we have the following lemma.

\begin{Lemma}
The distribution sets for $\mathbf{S_{2p-mon}}$ and $\mathbf{S_{2p-mon}}$ with respect to $Q$ are continuous, where $\mathbf{S_{2p-mon}}$ is the twice-punctured monogon and $\mathbf{S_{1p-dig}}$ is the once-punctured digon.
\label{78}
\end{Lemma}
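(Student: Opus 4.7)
The plan is to handle both cases by a direct finite enumeration of $Mut[Q]$.

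First, I would apply the arc-count formula $n=6g+3b+3p+c-6$ to fix the rank of each quiver: for the once-punctured digon $(g,b,p,c)=(0,1,1,2)$ gives $n=2$, and for the twice-punctured monogon $(g,b,p,c)=(0,1,2,1)$ gives $n=4$. In both cases $Q$ has at most four mutable vertices and, being of surface type, is of finite mutation type, so $Mut[Q]$ is finite and fully enumerable.

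Next, I would fix a concrete initial triangulation $T_0$ of each surface and read off the exchange quiver $Q_{T_0}$ using the rules in the definition of $Q_T$. For the once-punctured digon the natural choice is the two arcs from the puncture to the two boundary points; the two ideal triangles so created share both mutable arcs, producing a $2$-cycle between the two mutable vertices that is cancelled by the last bullet, so $Q_{T_0}$ is the empty quiver on two vertices. Since mutation at an isolated vertex changes nothing, the distribution set is a single point and continuity is immediate. For the twice-punctured monogon I would pick an initial triangulation containing a self-folded triangle around each puncture together with the arcs connecting them to the boundary point, applying the third bullet of the construction to transfer the arrows from each enclosing loop to its inner arc.

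Then, starting from $Q_{T_0}$ for the twice-punctured monogon, I would perform a breadth-first search under mutation: iteratively mutate at every mutable vertex and record each new isomorphism class of quiver together with its arrow count $|Q'_1|$. By Proposition \ref{f-m} together with the connectedness of $\mathbf{E^\circ(S,M)}$, this procedure terminates with exactly $Mut[Q]$. After tabulating all arrow counts, the task reduces to checking that the resulting set $W_\mathbf{S}$ equals the interval $[t_{\min},t_{\max}]$, which is a finite verification.

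The main obstacle is purely bookkeeping, and is concentrated in the twice-punctured monogon case: one must correctly apply the self-folded-triangle rule (arrows to and from the inner arc inherit from the surrounding loop), identify quivers only up to isomorphism so that mutations returning to an already-seen class are recognised, and ensure the search has genuinely saturated before reading off the interval. Once these subtleties are handled, the enumeration is short and exhibits continuity directly.
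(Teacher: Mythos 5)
Your proposal is correct and takes essentially the same route as the paper: the paper also proves this lemma by finite enumeration, listing triangulations of the twice-punctured monogon realizing each arrow count to get $W_{\mathbf{S_{2p-mon}}}=\{4,5,6\}$ and observing directly that $W_{\mathbf{S_{1p-dig}}}=\{0\}$. Your organization of the enumeration as a mutation-class search rather than a list of triangulations up to isotopy is only an implementation difference, though to be a complete proof you would still need to actually carry out the search and record the resulting arrow counts.
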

\begin{proof}
For $\mathbf{S_{2p-mon}}$, there are triangulations equipped with the same number of arrows. For each number, we list one triangulation in Figure \ref{2p-mon}. Then the distribution set for $\mathbf{S_{2p-mon}}$ is $\{4,5,6\}$.

It is easy to see that the distribution set $W_\mathbf{S_{1p-dig}}$ is $\{0\}$.
\begin{figure}
\centering
\includegraphics[width=8cm]{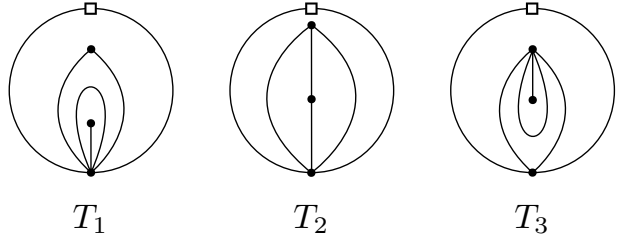}
\caption{Triangulations of $\mathbf{S_{2p-mon}}$}
\label{2p-mon}
\end{figure}
\end{proof}
\begin{Lemma}
The distribution sets for $\mathbf{S_{2p-dig}}$ and $\mathbf{S_{4p-sphere}}$ with respect to $Q$ are not continuous, where $\mathbf{S_{2p-dig}}$ is the twice-punctured digon and $\mathbf{S_{4p-sphere}}$ is the forth-punctured sphere. More specifically,
$$W_\mathbf{S_{2p-dig}}=\{4,6,7,8\},\ W_\mathbf{S_{4p-sphere}}=\{8,9,10,12\}.$$
\label{bulianxu}
\end{Lemma}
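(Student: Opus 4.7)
The plan is a direct enumeration in the same spirit as the proof of Lemma~\ref{78}. By the arc-count formula $n=6g+3b+3p+c-6$, every triangulation of $\mathbf{S_{2p-dig}}$ carries exactly $5$ arcs and every triangulation of $\mathbf{S_{4p-sphere}}$ carries exactly $6$ arcs. Since each surface gives rise to a cluster quiver of finite mutation type, both surfaces support only finitely many triangulations up to homeomorphism, so the enumeration terminates.

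First I would realize every value claimed to lie in the distribution set. Using the puzzle-piece construction of Figure~\ref{ori+tri} (together with, for $\mathbf{S_{4p-sphere}}$, the exceptional triangulation obtained by gluing three self-folded triangles onto a central ordinary triangle, as flagged in the paragraph preceding that figure), I would exhibit for each $t\in\{4,6,7,8\}$ one explicit triangulation of $\mathbf{S_{2p-dig}}$ and for each $t\in\{8,9,10,12\}$ one explicit triangulation of $\mathbf{S_{4p-sphere}}$, draw the associated $Q_T$, and read off $|(Q_T)_1|=t$. The only subtlety at this step is to cancel the $2$-cycles arising whenever two triangles share two sides, as stipulated in the definition of $Q_T$.

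Next I would rule out the missing values $t=5$ for $\mathbf{S_{2p-dig}}$ and $t=11$ for $\mathbf{S_{4p-sphere}}$. I would stratify triangulations by the number $k$ of self-folded triangles they contain; since each self-folded triangle uses one puncture, $k\in\{0,1,2\}$ for the digon and $k\in\{0,1,2,3,4\}$ for the sphere. Within each value of $k$ there are only a handful of compatible matchings of the remaining puzzle pieces (or, in the sphere case with large $k$, of self-folded triangles around a central ordinary triangle), and for each such combinatorial pattern I would compute $|(Q_T)_1|$ outright and verify that it lies in $\{4,6,7,8\}$ (resp.\ $\{8,9,10,12\}$), so that $5$ (resp.\ $11$) is never attained.

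The main obstacle is the case analysis for $\mathbf{S_{4p-sphere}}$: besides the ordinary puzzle-piece gluings one must handle the exceptional triangulation, and one must carefully track the $2$-cycle cancellations induced by pairs of triangles sharing two sides, which occur frequently on small surfaces and shift the raw arrow count by $2$ at each cancellation. Once this bookkeeping is completed, collecting $|(Q_T)_1|$ across all patterns yields exactly the two sets asserted in the statement and establishes, in particular, that each distribution set has a gap and is therefore not continuous.
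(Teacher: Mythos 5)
Your proposal is correct and is essentially the same argument as the paper's: the paper also proves this lemma by enumerating all triangulations of the twice-punctured digon and the four-punctured sphere up to isotopy (listed in its Table of triangulations), reading off the arrow counts of the associated quivers, and observing that $5$ and $11$ never occur. Your stratification by the number of self-folded triangles is just a bookkeeping device for the same finite enumeration (note only that at most $3$ self-folded triangles can occur on the four-punctured sphere, since each uses two of the six arcs, so your stratum $k=4$ is empty).
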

\begin{proof}
For $\mathbf{S_{2p-dig}}$, any triangulation is isotopy to one of the graphs shown in the left of Table \ref{2p+4p}. There is no exchange cluster quiver with 5 arrows arising from $\mathbf{S_{2p-dig}}$.

For $\mathbf{S_{4p-sphere}}$, any triangulation is isotopy to one of graphs shown in the right of Table \ref{2p+4p}. There is no exchange cluster quiver with 11 arrows arising from $\mathbf{S_{4p-sphere}}$.
\begin{table}
\centering
\includegraphics[width=16cm]{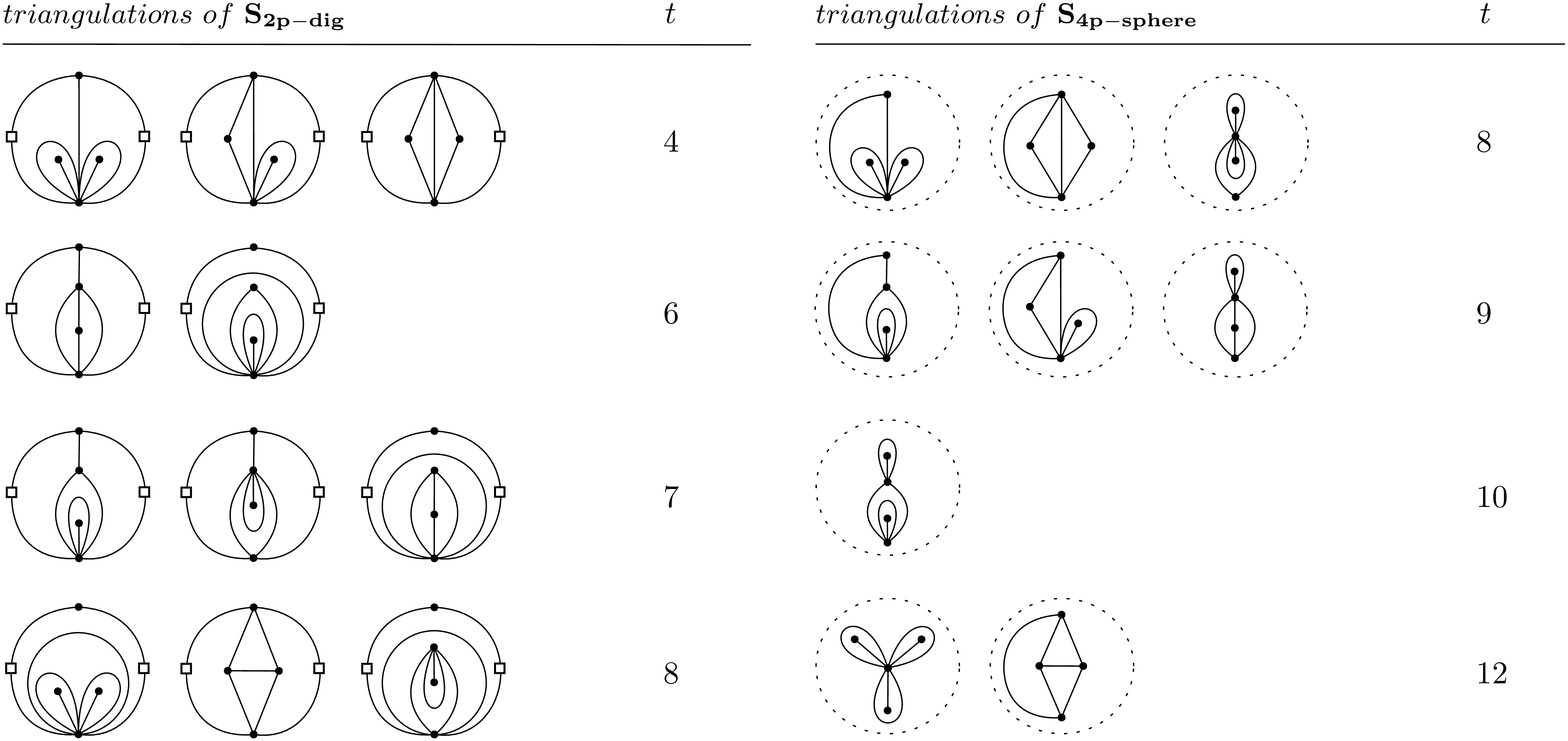}
\caption{Triangulations of $\mathbf{S_{2p-dig}}$ and $\mathbf{S_{4p-sphere}}$ (up to isotopy)}
\label{2p+4p}
\end{table}
\end{proof}

We next discuss $W_\mathbf{S}$ for the surface $\mathbf{S}$ which is not one of the twice-punctured monogon, the once-punctured or twice-punctured digon, and the forth-punctured sphere. Triangulations of such surface can be obtained by gluing a number of pieces $\triangle_i$($i=1,\cdots,6$).

For a block shown in Figure \ref{piece0} obtained by gluing a piece $\triangle_3$ to a piece $\triangle_i(i=2,3)$, there is a 2-cycle in the associated quiver which needs to be deleted. We denote this block as the piece $\triangle_0$.
The unlabeled exterior side represents that it can be a matched side or a frozen side.
\begin{figure}[H]
\centering
\includegraphics[width=2.5cm]{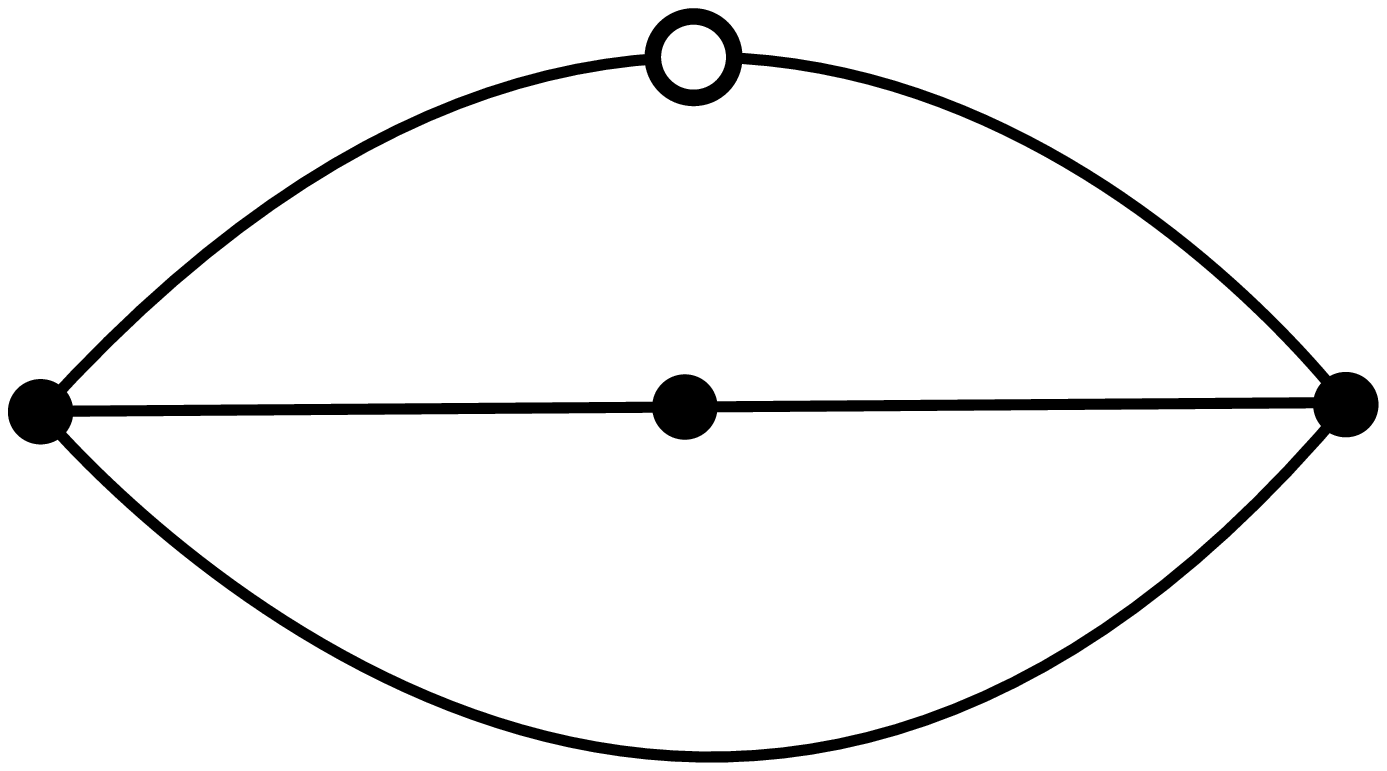}
\caption{Piece $\triangle_0$}
\label{piece0}
\end{figure}

By the relation between the triangulation and the associated exchange cluster quiver, we give the number of arrows corresponding to each piece shown in Table \ref{num-ori-tri}, where the negative integer means that when there is a piece $\triangle_0$, the number of arrows should be decreased by 2. Let $t_i$($i=0,1,\cdots,6$) be the number of pieces $\triangle_i$ in $T$.
\begin{table}[H]
\centering
\includegraphics[width=16cm]{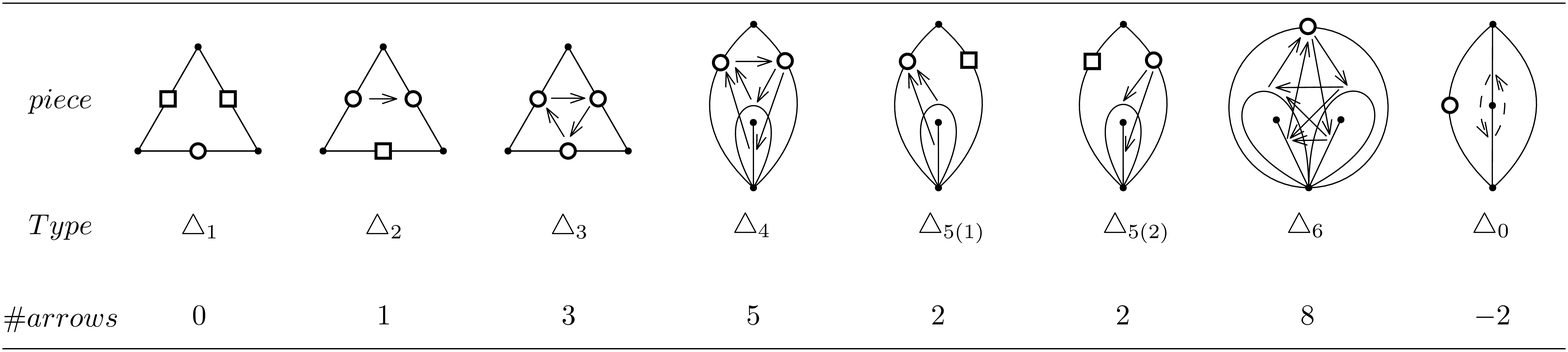}
\caption{The number of arrows for each piece with respect to $Q$}
\label{num-ori-tri}
\end{table}

\begin{Proposition}\cite{BY}
For a triangulated surface, which is none of a twice-punctured monogon, once-punctured digon or a forth-punctured sphere, let $t_i$ be the number of the piece $\triangle_i$ in a triangulation $T$ of $\mathbf{S}$. Then the calculation formula for the number of arrows of $Q$ associated with $T$ is $t=t_2+3t_3+5t_4+2t_5+8t_6-2t_0$.
\end{Proposition}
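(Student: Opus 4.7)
The plan is to verify the formula by a direct piecewise count: first establish the arrow contribution of each puzzle piece in isolation (reproducing Table \ref{num-ori-tri}), then show that gluing along matched sides is additive in arrow count, and finally record the $-2$ correction coming from each $\triangle_0$ block as the unique source of deleted 2-cycles.

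First I would verify the per-piece counts. For each $\triangle_i$ with $i=1,\ldots,6$, I enumerate its internal triangles and record which sides are arcs of $T$ (arcs strictly interior to the piece, plus any matched side, which becomes an arc after gluing) and which sides are frozen. By the definition of $Q_T$, each unordered pair of arcs lying in a common ordinary triangle contributes exactly one clockwise-oriented arrow, while for a self-folded triangle the incidence rule transfers any arrow incident to the loop-side to the enclosed arc. Carrying out this tally separately for each $\triangle_i$ yields the contributions $0,1,3,5,2,8$ for $i=1,\ldots,6$ respectively.

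Next I would argue that these counts are additive under gluing. If two pieces $A$ and $B$ are glued along a matched side $e$, the two triangles containing $e$ are one from $A$ and one from $B$. The arrows incident to $e$ then split into those to the other two sides of the $A$-triangle (already counted in $c_A$) and those to the other two sides of the $B$-triangle (already counted in $c_B$), with no overlap of vertex pairs. Since every triangle of $T$ sits inside a single puzzle piece, every arrow of $Q_T$ is accounted for exactly once in the sum $\sum_{i=1}^{6} c_i t_i$ before any 2-cycle deletion.

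Finally I would pin down the 2-cycle correction. A 2-cycle in $Q_T$ requires two triangles of $T$ that share two sides. Inspecting how the six pieces can be glued, the only configuration producing such a pair is precisely the block $\triangle_0$ of Figure \ref{piece0}, namely a piece $\triangle_3$ glued to a piece $\triangle_2$ or $\triangle_3$; each occurrence of this block yields a single 2-cycle, whose deletion removes $2$ arrows. Combining with Step~2 gives $t=t_2+3t_3+5t_4+2t_5+8t_6-2t_0$, as claimed. The main obstacle is the exhaustive case check in this last step: one must verify, using the puzzle-piece classification of Remark~4.2 of \cite{FS} (valid precisely because $\mathbf{S}$ is not one of the excluded exceptional surfaces), that no other combination of pieces produces an additional pair of triangles sharing two sides and hence no unaccounted 2-cycle.
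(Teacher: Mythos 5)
The paper does not actually prove this Proposition: it is imported wholesale from \cite{BY}, and the only work the paper does is the dictionary in the subsequent Remark, translating the invariants $f(T), w(T), d_{neg}(T), s_f(T), s_w(T)$ of \cite{BY} into the piece counts $t_i$ and checking that $3f+w-2d_{neg}-s_f-2s_w$ expands to $t_2+3t_3+5t_4+2t_5+8t_6-2t_0$. Your direct combinatorial proof is therefore a genuinely different route: it replaces the citation by a self-contained argument built on the per-piece tallies of Table \ref{num-ori-tri} and the additivity of arrow counts under gluing. Your first two steps are sound; in particular the per-piece contributions $0,1,3,5,2,8$ do check out, although for $\triangle_6$ you should note that the transfer rule must be applied to \emph{both} self-folded triangles, including the iterated transfer that produces the arrow between the two enclosed radii --- without that extra arrow the count comes out to $7$ rather than $8$.

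The genuine gap is in your third step. You assert that the only configuration in which two ideal triangles share two sides is the block $\triangle_0$ of Figure \ref{piece0}, i.e.\ a $\triangle_3$ glued to a $\triangle_2$ or $\triangle_3$ along two matched sides, and you defer the verification. But this is exactly where the argument can fail: the outer (non-self-folded) triangle of a piece $\triangle_4$ has two matched exterior sides, and nothing in the puzzle-piece construction of Remark~4.2 of \cite{FS} forbids gluing both of them to two sides of a single adjacent $\triangle_2$ or $\triangle_3$. Such a gluing encloses the common vertex as a puncture, produces a pair of triangles sharing two sides, and hence a deleted 2-cycle that is not of the form $\triangle_0$ as you (and the paper) define it; one can realize this inside, say, a twice-punctured triangle, which is within the scope of the Proposition. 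So either these configurations must be shown to be impossible or already absorbed into $t_0$ (i.e.\ $t_0$ must be read as counting \emph{all} pairs of triangles sharing two sides, which is how $d_{neg}(T)$ is used in \cite{BY}), or the per-piece bookkeeping must be redone to account for them. As written, your proof establishes the formula only modulo this unproved exhaustiveness claim, which is precisely the hard part of the argument.
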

\begin{Remark}
  In \cite{BY}, the calculation formula for the number of arrows of $Q$ is represented as $e(Q_T)=3f(T)+w(T)-2d_{neg}(T)-s_f(T)-2s_w(T)$, where the definitions of symbols in the calculation formula can be found in \cite{BY}. Then we have that $e(Q_T)$ is the number of arrows $t$, $f(T)$ equals to $t_3+2t_4+t_5+3t_6$, $w(T)$ equals to $t_2+t_5$, $d_{neg}(T)$ equals to $t_0$, $s_f(T)$ equals to $t_4+t_6$, $s_w(T)$ equals to $t_5$.
\end{Remark}
\begin{Corollary}
For a triangulated surface $\mathbf{S}$ with $n$ arcs and $c$ boundary points, which is none of a twice-punctured monogon, once-punctured digon or a forth-punctured sphere, let $t_i$ be the number of the piece $\triangle_i$ in a triangulation $T$ of $\mathbf{S}$. Then \textbf{the calculation formula for the number of arrows of $\mathbf{Q}$} associated with $T$ is $t=2n-c+t_1-t_4-2(t_5+t_0)-t_6$.
\label{calculation}
\end{Corollary}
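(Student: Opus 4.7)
The plan is to combine the arrow-count formula of the preceding proposition with an independent count of the number of arcs $n$ and the number of boundary points $c$ in terms of the piece counts $t_i$. Starting from
$$t = t_2 + 3t_3 + 5t_4 + 2t_5 + 8t_6 - 2t_0,$$
the target identity $t = 2n - c + t_1 - t_4 - 2(t_5+t_0) - t_6$ is equivalent, after collecting terms, to the single combinatorial identity
$$2n - c \;=\; -t_1 + t_2 + 3t_3 + 6t_4 + 4t_5 + 9t_6. \qquad (\ast)$$
So the real content of the corollary is to establish $(\ast)$.

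To prove $(\ast)$, I would tabulate, for each piece $\triangle_i$ in Figure \ref{ori+tri}, its contributions to $n$ and to $c$. For the arc count, each piece contributes the number of arcs lying in its interior plus one half of the number of its matched sides, since by definition matched sides are identified in pairs, each pair producing one arc. For the boundary-point count, each piece contributes the endpoints of its frozen sides, with each boundary point counted with multiplicity one half (since on a boundary component each marked point is an endpoint of exactly two frozen sides, namely the two boundary segments meeting at that point). Summing piece by piece gives $n = \sum \alpha_i t_i$ and $c = \sum \beta_i t_i$ for explicit coefficients $\alpha_i, \beta_i$ that can be read off directly from Figure \ref{ori+tri}, and substitution verifies $(\ast)$ by an arithmetic check.

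The main technical subtlety is the bookkeeping for the auxiliary piece $\triangle_0$. Because $\triangle_0$ is not itself a puzzle piece but a composite formed by gluing a self-folded piece $\triangle_3$ to a piece of type $\triangle_2$ or $\triangle_3$ (Figure \ref{piece0}), its occurrences are already counted inside $t_2$ and $t_3$. The point is that forming $\triangle_0$ neither creates nor destroys arcs or boundary points: it only forces the deletion of a 2-cycle in $Q_T$, which is precisely the origin of the $-2t_0$ correction in the arrow formula. Consequently $\triangle_0$ does not appear in the expressions for $n$ and $c$, and the $-2t_0$ term migrates unchanged from the first formula to the second.

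The chief obstacle is therefore the careful case-by-case tabulation, together with convincing oneself that the exceptional surfaces excluded in the hypothesis are exactly those for which the piece decomposition breaks down: the twice-punctured monogon, the once-punctured digon and the four-punctured sphere are precisely the surfaces whose triangulations cannot be assembled from $\triangle_1, \dots, \triangle_6$ alone. Once these cases are ruled out, the identity $(\ast)$ holds uniformly and the corollary follows from the preceding proposition.
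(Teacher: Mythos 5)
Your proposal is correct and follows essentially the same route as the paper: the paper likewise derives $c=2t_1+t_2+t_5$ from counting frozen sides and $2n=t_1+2t_2+3t_3+6t_4+5t_5+9t_6$ from pairing matched sides (plus the interior arcs of $\triangle_4,\triangle_5,\triangle_6$), then substitutes into the proposition's formula $t=t_2+3t_3+5t_4+2t_5+8t_6-2t_0$; your per-piece half-weighted bookkeeping is just a reorganization of the same two identities, and your identity $(\ast)$ is exactly what that substitution requires. The treatment of $\triangle_0$ as a composite block affecting only the $-2t_0$ correction also matches the paper's implicit handling.
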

\begin{proof}
From Figure \ref{ori+tri}, according to the relation between the number of boundary points and the number of frozen sides in different kinds of pieces, it is easy to see that
$$c=2t_1+t_2+t_5.$$

Since except for the interior sides in pieces $\triangle_4$, $\triangle_5$ and $\triangle_6$, an arc in $T$ is the common side of two ideal triangles, which is obtained by gluing two matched sides together. Then according to the relation between the number of arcs and the number of matched edges in different kinds of pieces, we have the following formula, where the left side is the number of arcs obtained by gluing two matched sides, that is, the number of arcs minus the sum of the numbers of interior sides in pieces $\triangle_4$, $\triangle_5$ and $\triangle_6$, and the right side is the half number of matched sides before gluing.
$$n-2(t_4+t_5)-4t_6=(t_1+2t_2+3t_3+2t_4+t_5+t_6)/2.$$

Combining the above proposition, we have the calculation formula for the number of arrows of $Q$.
\end{proof}

\subsection{The maximum and minimum numbers of arrows of $Q$ arising from surfaces}
Let $t_{max}$ and $t_{min}$ be the maximum and minimum numbers of the distribution set $W_\mathbf{S}$ for a surface $\mathbf{S}$ respectively.

Corollary 4.4 of \cite{BY} shows that $t_{max}=2n-\frac{c+m}{2}$ holds if and only if the surface is not one of the once-punctured digon, triangle or rectangle and the twice-punctured monogon.
Then for a surface which is none of a twice-punctured monogon, a once-punctured or twice-punctured digon, and a forth-punctured sphere, we have
$$ t_{max}=
\begin{cases}
3 & \text{g=0, b=1, p=1, c=3}\\
5 & \text{g=0, b=1, p=1, c=4}\\
2n-\frac{c+m}{2} & \text{otherwise}
\end{cases},$$
where $m$ is the number of boundary components with odd boundary points, $c$ is the number of boundary points and $n$ is the number of arcs.
Next, we will give $t_{min}$ for a surface $\mathbf{S}$.

Some notations are necessary before creating a triangulation equipped with $t_{min}$ arrows.
For the surface with $g$ genus, the triangulation is represented by a plane graph obtained by \textbf{sniping} the surface from a marked point into a $4g-$polygon.

We call a \textbf{borderline} of a surface as a general term for boundaries of the surface and boundaries obtained by sniping the surface. For example, the triangulation of a torus is represented by a plane graph obtained by sniping it into a rectangle, then the borderline of a torus is the boundary of that rectangle. Besides for a surface with boundaries and zero genus, a borderline is a boundary component.

The graph shown in Figure \ref{many} as a part of some triangulation with $p$ punctures is called \textbf{a family of pieces $\mathbf{\triangle_0}$}. When there is only one puncture, the  family of pieces $\triangle_0$  is actually a piece $\triangle_0$.

\begin{Notation}
  Let $\alpha$ be an arc or a side on the boundary in a triangulated surface. We say that {\bf $\alpha$ is replaced  with a family of pieces $\triangle_0$} if $\alpha$ is glued with one of the two exterior sides of a family of pieces $\triangle_0$ and after this gluing, any contractible digon that could appear after gluing is deleted its one side.
\end{Notation}
\begin{figure}
\centering
\includegraphics[height=2.7cm]{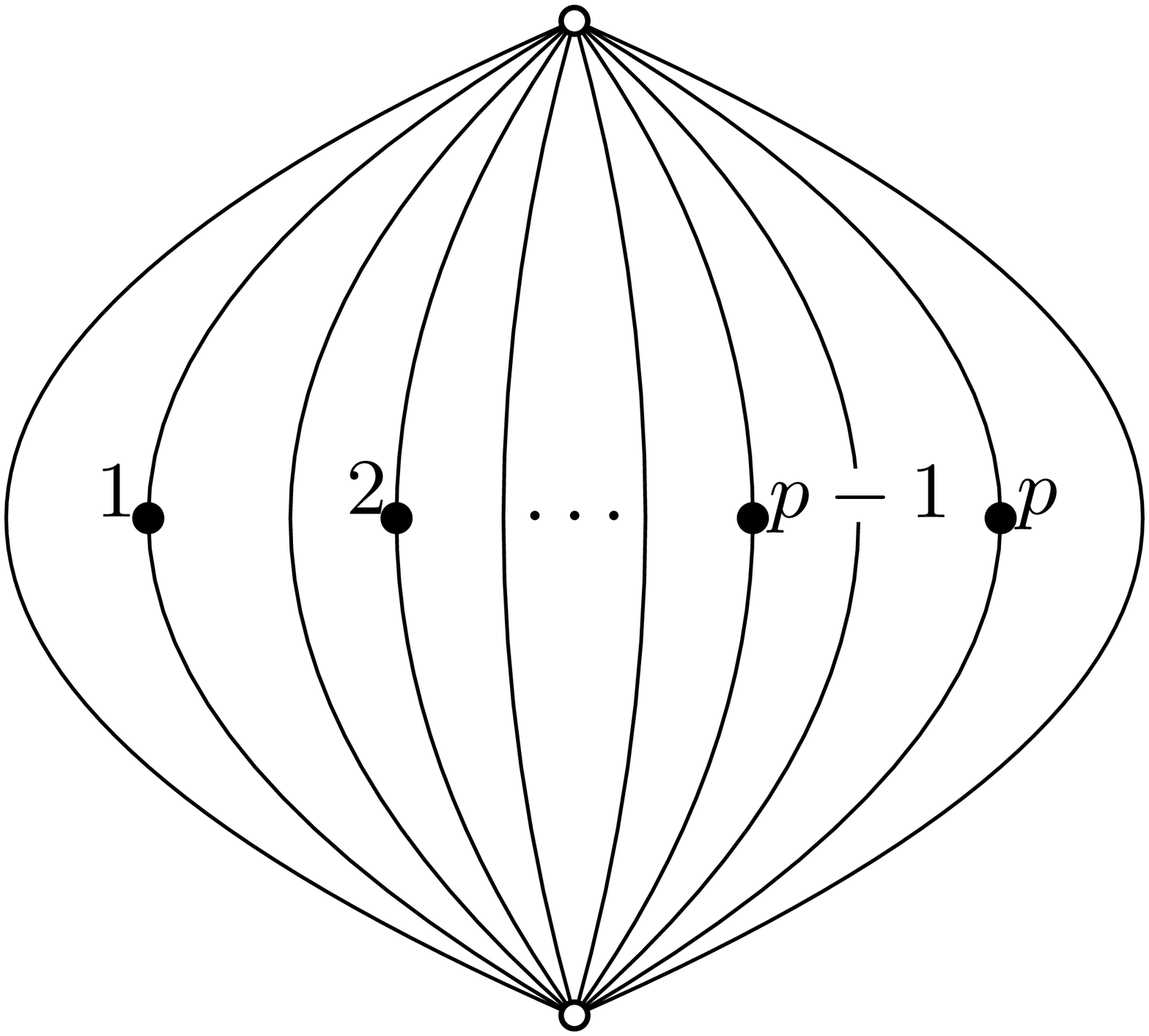}
\caption{A family of pieces $\triangle_0$}
\label{many}
\end{figure}
Let $\mathbf{S'}$ be a surface obtained by deleting all the punctures of the surface $\mathbf{S}$. Then by replacing an arc or a side on the boundary with a family of pieces $\triangle_0$, we will obtain a triangulated $\mathbf{S}$ from a triangulated $\mathbf{S'}$.
\begin{Notation}[Labeling procedure]
  If two borderlines have a common point, then this point is unique so label it by 0. Otherwise, randomly choose a point on each borderline and label it by 0. After that, label the other points $1,2,\cdots$ clockwise on each borderline.
\end{Notation}
\begin{Notation}[Linking procedure]
  After labeling procedure for a surface, if there exists an interior borderline, first choose a point of degree 2 on the exterior borderline and link it to all the points of degree 2 on the interior borderlines. Next link each point of degree 2 on the exterior borderline to suitable points on the interior borderline, make sure that arcs are compatible. Otherwise, link the point labeled by zero to points labeled by $2,3,\cdots,c-2$($c\geq4$).
\end{Notation}
\begin{Example}
  For a surface where $g=1,b=1,p=1,c=3$, proceed labeling procedure and linking procedure on it, we obtain a surface that has not been completely triangulated shown in below.
  \begin{figure}[H]
\centering
\includegraphics[height=1.8cm]{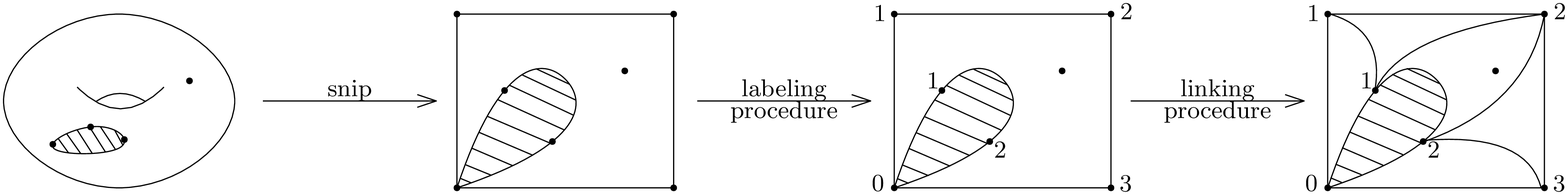}
\end{figure}
\end{Example}
By processing labeling procedure and linking procedure on a surface, it is clear that for the surface with more than one borderline, the degree of any boundary point is at least 3, so that the number of pieces $\triangle_1$ is zero.

According to the calculation formula of the number of arrows of $Q$, the coefficient of $t_i$($i=0,4,5,6$) is negative, while that of $t_1$ is positive.
To create a triangulation equipped with minimum number of arrows, we wish $t_i(i=0,4,5,6$) as small as possible and $t_1$ as large as possible.
Besides, because the coefficients of $t_5$ and $t_0$ are less then that of $t_4$ and $t_6$, and flipping from a piece $\triangle_5$ to a piece $\triangle_0$ will not change the number of arrows, so we might as well let $t_0$ the most.
According to the number of punctures lied in the interior of $\triangle_0$, we have $p\geq t_0+t_4+t_5+2t_6$, so that ${(t_0)}_{max}=p$.
Thus the optimal condition for $t_{min}$ is that $t_1=0$ and $t_0=p$.

Note that except for the boundary point which is the junction point of two frozen sides in a piece $\triangle_1$, the others are of degree 3. Then the equivalent condition of that $t_1=0$ is that the degree of any boundary point is at least 3, denoted by \emph{Min1}.

It is easy to see that $t_0=p$ $\Leftrightarrow$ each puncture lies in the interior of a piece $\triangle_0$ $\Leftrightarrow$ points on the side of any piece $\triangle_0$ are boundary points.

\begin{Lemma}
  For a surface $\mathbf{S}$ with $p$ punctures, there is a triangulation with $t_0=p$ if and only if there are two boundary points labeled the same on the different borderline or labeled different after proceeding labeling procedure on $\mathbf{S}$.
\end{Lemma}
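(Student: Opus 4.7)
The plan is to prove both directions directly: sufficiency by an explicit construction, and necessity by contrapositive.

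For the sufficiency direction, assume the labeling condition holds. First apply the labeling and the linking procedures to $\mathbf{S}$. Under hypothesis (a), two boundary points on different borderlines sharing the same label can be connected by an arc respecting the labeling. Under hypothesis (b), some borderline carries at least two boundary points with distinct labels, so the linking procedure produces at least one valid arc. In either case, I extend the resulting partial arc structure to an ideal triangulation of $\mathbf{S'}$ that contains at least one arc or boundary side $\alpha$ available for replacement. Using the replacement notation defined above, I substitute $\alpha$ by a family of $p$ pieces $\triangle_0$ so that each puncture of $\mathbf{S}$ lies inside exactly one piece $\triangle_0$. The resulting configuration is a valid ideal triangulation of $\mathbf{S}$, and by construction $t_0 = p$.

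For the necessity direction, I proceed by contrapositive. If the labeling condition fails, then every borderline carries only its label-$0$ point, which forces $\mathbf{S}$ to have a single borderline with a single labeled boundary point; so either $g = 0$, $b = 1$, $c = 1$ (a multiply-punctured monogon), or $b = 0$ with the unique borderline arising from sniping at a puncture. Assume for contradiction that a triangulation $T$ with $t_0 = p$ exists. Then each puncture must lie inside some piece $\triangle_0$, and each such family is attached along an arc or boundary side whose two endpoints are boundary points. Because only one boundary point is available, every such host arc must be a loop at that single point, and the nested family of $p$ pieces $\triangle_0$ must fit into a region bounded by such loops. A case analysis on the genus $g$ and on whether the lone borderline comes from a boundary component or from a sniping shows that this nested structure cannot accommodate all $p$ punctures simultaneously while remaining a maximal collection of compatible arcs, contradicting $t_0 = p$.

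The main obstacle is the compatibility check in the sufficiency direction: after inserting a family of pieces $\triangle_0$ along $\alpha$, one must verify that no contractible digon survives (such a digon would be deleted by the replacement rule and reduce $t_0$ below $p$) and that the remaining arcs produced by the linking procedure still form a valid ideal triangulation of $\mathbf{S}$. The necessity direction is comparatively cleaner but still requires correctly enumerating all surfaces whose labeling violates the stated condition, in particular the closed-surface cases where the single labeled point is a puncture moved to the borderline by sniping.
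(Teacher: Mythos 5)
Your sufficiency direction is exactly the paper's argument: connect the two distinguished boundary points $k$ and $k'$ by an arc in a triangulation of $\mathbf{S'}$ and replace that arc with a family of pieces $\triangle_0$; since the two parallel copies of the arc bound a region containing all $p$ punctures, no contractible digon or loop arises, and $t_0=p$. Your reduction in the necessity direction is also the paper's: the failure of the labeling condition forces at most one borderline carrying at most one labeled point, i.e.\ essentially the $p$-punctured monogon (and you rightly flag the closed-surface case, which the paper's own proof passes over silently but which is handled separately in the cases of Lemma 3.10 where it asserts \emph{Min2} fails for $b=0$).

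The gap is in the last step of necessity. You write that ``a case analysis \dots shows that this nested structure cannot accommodate all $p$ punctures,'' but that case analysis is precisely the content of the claim and is never carried out. The paper's argument here is short and concrete, and you should make it explicit: if $t_0=p$, every puncture sits inside a piece $\triangle_0$, so the whole family of pieces $\triangle_0$ must be glued along the unique available curve, namely the single side of the monogon (a loop at the one boundary point); after this gluing the innermost exterior side of the family becomes a loop bounding an unpunctured disk, i.e.\ a contractible loop, which is excluded by the definition of an arc (it cuts out a monogon). That single observation closes the monogon case, and the analogous statement for a closed surface (where the snipped borderline consists of copies of one puncture, so any host curve for the family is a loop at a puncture, leaving that puncture outside every piece $\triangle_0$) closes the $b=0$ case. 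As written, your proof asserts the conclusion of the decisive step rather than proving it.
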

\begin{proof}
  ($\Leftarrow$) Because there are two boundary points $k$ and $k'$ labeled the same on the different borderline or labeled different, then there exists a triangulation of $\mathbf{S'}$ with an arc whose endpoints are $k$ and $k'$. Thus we can obtain a triangulation of
  $\mathbf{S}$ with $t_0=p$ by replacing that arc with a family of pieces $\triangle_0$.

  ($\Rightarrow$) If such boundary points do not exist, $\mathbf{S}$ can only be a monogon with $p$ punctures. Then the triangulation with $t_0=p$ can only be obtained by replacing the side of $\mathbf{S'}$ with a family of pieces $\triangle_0$. at this time, there is a contractible loop in that triangulation, which is a contradiction.
\end{proof}
The equivalent condition in the lemma above is denoted by \emph{Min2}.

Thus the optimal situation for the triangulation equipped with $t_{min}$ arrows is that \emph{Min1} and \emph{Min2} hold simultaneously.
\begin{Lemma}
Assume that $\mathbf{S}$ is a surface which is none of a twice-punctured monogon, a once-punctured or twice-punctured digon, and a forth-punctured sphere. Let $t_{min}$ be minimum number of the distribution set $W_\mathbf{S}$, then
$$t_{min}=
\begin{cases}
2n-2p+4 & \text{g=0, b=0, p$\geq$5, c=0}\\
2n-2p & \text{g=0, b=1, p$\geq$3, c=1}\\
2n-c-2p+max\{0,2-p\} & \text{g=0, b=1, p$\geq$0, c$\geq$2}\\
2n-2p+2 & \text{g$\geq$1, b=0, p$\geq$1, c=0}\\
2n-c-2p & \text{otherwise.}
\end{cases},
\label{num}
$$
where $c$ is the number of boundary points, $p$ is the number of punctures and $n$ is the number of arcs.
\label{max+min}
\end{Lemma}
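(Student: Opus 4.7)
The plan is to combine the arrow-counting formula of Corollary~\ref{calculation} with an explicit construction via the labeling and linking procedures, then handle each exceptional topology by a separate obstruction. Starting from $t = 2n - c + t_1 - t_4 - 2(t_5+t_0) - t_6$, I would use the puncture-bookkeeping inequality $t_0+t_4+t_5+2t_6 \leq p$ (each of the pieces $\triangle_0,\triangle_4,\triangle_5,\triangle_6$ occupies at least one puncture, with $\triangle_6$ occupying two) to substitute $-2t_0 \geq -2p+2t_4+2t_5+4t_6$, obtaining the universal bound
\[
  t \;\geq\; 2n - c - 2p + t_1 + t_4 + 3t_6.
\]
Thus $t_{min} \geq 2n - c - 2p$, with equality requiring $t_1=t_4=t_6=0$ and every puncture placed inside a piece $\triangle_0$ or $\triangle_5$. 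As noted in the preceding paragraph of the excerpt, a local flip converts $\triangle_5$ into $\triangle_0$ without changing the arrow count, so at the optimum I may take $t_0=p$.

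For the ``otherwise'' row I would construct a triangulation realizing $t = 2n-c-2p$. Delete the punctures to obtain $\mathbf{S}'$ and apply the labeling and linking procedures to $\mathbf{S}'$; the row's hypotheses (more than one borderline or $c$ large enough, away from the small exceptional surfaces) guarantee that the resulting triangulation has every boundary point of degree $\geq 3$, i.e.\ \emph{Min1} holds. I then pick an arc or boundary side and replace it by a family of pieces $\triangle_0$ containing all $p$ punctures, securing \emph{Min2}. The resulting triangulation $T$ of $\mathbf{S}$ has $t_1=t_4=t_5=t_6=0$ and $t_0=p$, so $t_T = 2n-c-2p$.

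For each exceptional row the excess over $2n-c-2p$ comes from either an Euler-characteristic constraint or an ear-counting constraint. The total number $F$ of triangles of $\mathbf{S}$ is fixed by $g,b,p,c$, and each piece $\triangle_i$ contributes a fixed triangle count (with $\triangle_0$ contributing two); the linear relation $\sum_i f_i t_i = F$ interacts with the puncture bound and forces $t_1+t_4+3t_6$ to be positive. Concretely, on the closed sphere with $p\geq 5$ one has $t_1=0$ automatically but the $F$-constraint is incompatible with $t_0=p$, forcing $t_4+3t_6\geq 4$ and the $+4$ excess; on a closed surface of genus $g\geq 1$ the weaker bound $t_4+3t_6\geq 2$ arises; on the monogon ($c=1$, $p\geq 3$) the identity $t_2+t_5=1$ together with the $F$-constraint prevents simultaneously achieving $t_0=p$ and $t_5=0$, yielding a $+1$ excess; and on an un-punctured polygon ($p=0$, $c\geq 2$) the classical ear-counting fact that every triangulation of a polygon has at least two ears gives $t_1\geq 2$, while for the once-punctured polygon ($p=1$) one still obtains $t_1\geq 1$ after extracting the single $\triangle_0$. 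The matching upper bounds are local modifications of the generic construction: replace one $\triangle_0$ by a $\triangle_4$ or $\triangle_6$ in the closed cases, and use a fan triangulation of the polygon in the genus-zero cases.

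The main obstacle will be the Euler-characteristic/triangle-count analysis in the two closed-surface cases, where one must rule out every alternative combination of pieces satisfying the puncture bound and the $F$-constraint but giving a smaller value of $t_1+t_4+3t_6$. A secondary difficulty is verifying that the labeling/linking procedure really does produce a Min1 triangulation on every surface covered by the ``otherwise'' row; this is precisely the reason the twice-punctured monogon, once-punctured digon, twice-punctured digon, and forth-punctured sphere are excluded, since in these surfaces the procedure either has too few boundary points to reach degree $\geq 3$ or no arc available to host a family of pieces $\triangle_0$.
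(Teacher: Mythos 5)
Your overall skeleton --- the formula $t=2n-c+t_1-t_4-2(t_5+t_0)-t_6$, the universal bound $t\geq 2n-c-2p+t_1+t_4+3t_6$ obtained from $t_0+t_4+t_5+2t_6\leq p$, and the labeling/linking construction with a family of pieces $\triangle_0$ for the generic row --- matches the paper's strategy (the paper phrases the optimality conditions as \emph{Min1} ($t_1=0$) and \emph{Min2} ($t_0=p$) and realizes the generic value via its Minimum Constructions). However, your treatment of the exceptional rows contains a concrete error. For the closed sphere and for closed surfaces of genus $\geq 1$ you attribute the excess to forced inequalities $t_4+3t_6\geq 4$ and $t_4+3t_6\geq 2$; but the triangulations actually achieving $t_{min}$ in these cases have $t_4=t_6=0$ (the optimal configurations take $t_0=p-2$ on the sphere and $t_0=p-1$ when $g\geq 1$, $b=0$), so those inequalities are false and cannot be the source of the excess. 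The true mechanism is slack in your puncture inequality: on a closed surface the enclosing arc of a family of pieces $\triangle_0$ must end at marked points which are themselves punctures not absorbed by any piece, giving $t_0+t_4+t_5+2t_6\leq p-2$ on the sphere, respectively $\leq p-1$ for $g\geq 1$ where one snips at a puncture. Correspondingly, your proposed matching constructions (``replace one $\triangle_0$ by a $\triangle_4$ or $\triangle_6$'') would not produce the stated values.

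A second gap is the punctured monogon ($g=0$, $b=1$, $c=1$, $p\geq3$). Here $2t_1+t_2+t_5=1$ permits $t_5=1$, and a configuration with $t_5=1$, $t_0=p-1$ and all other exceptional pieces absent would meet your universal bound with value $2n-2p-1$, strictly below the claimed $t_{min}=2n-2p$. Your sketch does not rule this configuration out; one must argue separately that it is not realizable by an actual triangulation (the paper instead shows that \emph{Min2} fails so $(t_0)_{max}=p-1$, and exhibits the optimum with $t_0=p-1$, $t_4=1$). The polygon cases are essentially right ($t_1\geq 2$ by ear counting when $p=0$), though the claim $t_1\geq 1$ in the once-punctured case when $t_0=1$ still needs an argument rather than an appeal to ``extracting the single $\triangle_0$.''
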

\begin{proof}
Our main idea is to create a special triangulation equipped with $t_{min}$ arrows. Then we will discuss case by case for different type of surfaces.

\vspace{1 ex}
\textbf{Case1}: $\mathbf{S_1}$ where $g=0, b=0, p\geq5, c=0$.

Since there is no boundary, so that \emph{Min1} holds but \emph{Min2} does not.
Then a family of pieces $\triangle_0$ can only replace an arc whose endpoints are punctures, thus the maximum $t_0$ is $p-2$. By Minimum Construction $\uppercase\expandafter{\romannumeral1}$, we can obtain a triangulation of $\mathbf{S_1}$ equipped with $t_{min}$ arrows which is $2n-2p+4$.

\emph{Minimum Construction \uppercase\expandafter{\romannumeral1}}
\par $\bullet$ \emph{Choose two punctures and connect them by an arc.}
\par $\bullet$ \emph{Construct a family of pieces $\triangle_0$ by the remaining unconnected $p-2$ punctures.}
\par $\bullet$ \emph{Replace the arc with a family of pieces $\triangle_0$.}
\vspace{1 ex}

\textbf{Case2}: $\mathbf{S_2}$ where $g=0, b\geq1, p\geq0, c\geq1$.

we first discuss when conditions \emph{Min1} and \emph{Min2} can hold simultaneously for $\mathbf{S_2}$ after proceeding labeling procedure and linking procedure on $\mathbf{S'_2}$ (recall that $\mathbf{S'}$ is the surface obtained by deleting all the punctures of $\mathbf{S}$).

When $b\geq2$, obviously \emph{Min1} and \emph{Min2} can hold simultaneously, for the reason that after proceeding labeling procedure, \emph{Min2} holds; after proceeding linking procedure, \emph{Min1} holds.

When $b=1, c\geq2$, obviously \emph{Min2} holds.
If $c=3$ or $c=2$, there are at least two boundary points of degree 2 of $\mathbf{S'_2}$. It is easy to see that under the condition \emph{Min2}, a method to achieve condition \emph{Min1} is to replace two sides on the boundary with two pieces $\triangle_0$.
If $c\geq 4$, there are at least two pieces $\triangle_1$ in $\mathbf{S'_2}$ after proceeding linking procedure. In order to achieve condition \emph{Min1}, pieces $\triangle_1$ need to be removed. Under the condition \emph{Min2}, a method to remove a piece $\triangle_1$ is to replace one frozen side of $\triangle_1$ with a piece $\triangle_0$.
Thus \emph{Min1} and \emph{Min2} hold simultaneously only if there are at least 2 punctures.

When $b=1, c=1$, at this time $p\geq 3$. it is easy to see that \emph{Min1} holds but \emph{Min2} does not.

Thus \emph{Min1} and \emph{Min2} cannot hold simultaneously for a once-punctured polygon where $c\geq2$ or a monogon with $p$ ($p\geq3$) punctures.

For the surface where $g=0,b=1,p\geq3,c=1$, \emph{Min2} does not hold so that $(t_0)_{max}=p-1$.
We create the triangulation equipped with $t_{min}$ arrows shown in Figure.\ref{pfmax} where $t_0=p-1, t_4=1$, thus $t_{min}$ is $2n-2p$.
\begin{figure}[H]
\centering
\includegraphics[width=4cm]{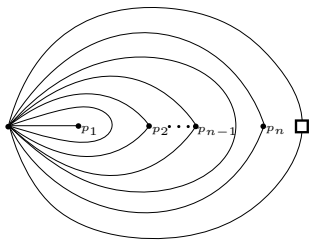}
\caption{A triangulation equipped with $t_{min}$ arrows of $\mathbf{S}$ where $g=0,b=1,p\geq3,c=1$}
\label{pfmax}
\end{figure}

Note that for the surface where $g=0,b=1,p=1,c\geq3$ (recall that $c=2$ has been discussed and we do not allow a monogon with one puncture), the triangulation with $t_1=1$ and $t_0=1$ is equipped with less arrows than that with $t_1=0$ and $t_0=0$, and the other triangulations are obvious equipped with more arrows. Thus we will create a triangulation with $t_1=1$ and $t_0=1$ for such surface.

Then for the surface $\mathbf{S_2}$ which is not one where $g=0,b=1,p\geq3,c=1$, by Minimum Construction $\uppercase\expandafter{\romannumeral2}$, we obtain a triangulation equipped with $t_{min}$ arrows which is $2n-c-2p+max\{0,2-p\}$ when $g=0, b=1, p\geq0, c\geq2$ or $2n-c-2p$ when $g=0, b\geq2, p\geq0, c\geq2$.

\emph{Minimum Construction \uppercase\expandafter{\romannumeral2}}
\par $\bullet$ \emph{Proceed labeling procedure and linking procedure on $\mathbf{S'}$.}
\par $\bullet$ \emph{When there is only one borderline, if $p=1$, replace a frozen side whose endpoints are labeled by $0$ and $1$ with a piece $\triangle_0$. If $p\geq2$, replace two frozen sides whose endpoints are labeled by $0$ and $1$, $0$ and $c-1$ respectively with two pieces $\triangle_0$}
\par $\bullet$ \emph{If there are remaining unconnected punctures, construct a family of pieces $\triangle_0$ by these punctures and then replace an arc whose endpoints are not punctures with it.}
\par $\bullet$ \emph{Supplement arcs to get a triangulation.}
\vspace{1 ex}

\textbf{Case3}: $\mathbf{S_3}$ where $g\geq1, b=0, p\geq1, c=0$.

The triangulation of a surface where $g\neq0$ is generally represented in a plane graph by snipping the surface. Since there is no boundary, so that conditions \emph{Min1} holds but \emph{Min2} does not.
Because points on the borderline of the snipped surface are the same puncture, there are at most $p-1$ pieces $\triangle_0$. By Minimum Construction $\uppercase\expandafter{\romannumeral3}$, we obtain a triangulation equipped with $t_{min}$ arrows where $t_{min}=2n-2p+2$.

\emph{Minimum Construction \uppercase\expandafter{\romannumeral3}}
\par $\bullet$ \emph{Snip the surface from a puncture into a plane graph whose underlying graph is a $4g$-sided polygon and then triangulate it.}
\par $\bullet$ \emph{Construct a family of pieces $\triangle_0$ by the remaining $p-1$ punctures and then replace an arc whose endpoints are labeled different with it.}
\par $\bullet$ \emph{Supplement arcs to get a triangulation.}
\vspace{1 ex}

\textbf{Case4}: $\mathbf{S_4}$ where $g\geq1,\ b\geq1,\ p\geq0,\ c\geq1$

The triangulation of $\mathbf{S_4}$ is represented in a plane graph snipping from a boundary point. In this situation, there are at least 2 borderlines. By the discussion in Case 2, it is known that \emph{Min1} and \emph{Min2} hold simultaneously after proceeding labeling procedure and the linking procedure. Thus by Minimum Construction $\uppercase\expandafter{\romannumeral4}$, we obtain a triangulation equipped with $t_{min}$ arrows where $t_{min}=2n-c-2p$.

\emph{Minimum Construction \uppercase\expandafter{\romannumeral4}}
\par $\bullet$ \emph{Snip the surface from a boundary point into a plane graph whose underlying graph is a 4g-sided polygon with $b$ holes.}
\par $\bullet$ \emph{Proceed the Minimum Construction \uppercase\expandafter{\romannumeral2}.}
\end{proof}

\begin{Proposition}
  The maximum number $t_{max}$ and minimum number $t_{min}$ of the distribution set $W_\mathbf{S}$ for a surface $\mathbf{S}$ are given respectively in the following table, where $m$ is the number of boundary components with odd boundary points, $c$ is the number of boundary points, $p$ is the number of punctures and $n$ is the number of arcs.
\begin{table}[H]
\centering
\includegraphics[width=16cm]{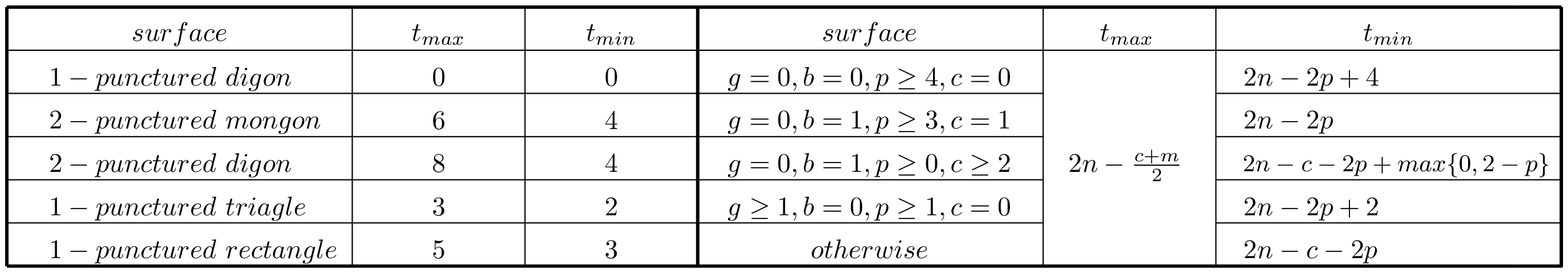}
\caption{The $t_{max}$ and $t_{min}$ of $W_\mathbf{S}$}
\label{type}
\end{table}
\vspace{-0.7cm}
\label{num-t}
\end{Proposition}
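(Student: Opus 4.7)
The plan is to assemble the table entry by entry by appealing to the results already established, rather than proving a new identity from scratch. The proposition is really a consolidation statement: for each topological type $(g,b,p,c)$, it packages together the maximum and minimum values coming from two separate sources.

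First I would handle $t_{max}$. For every surface $\mathbf{S}$ other than the once-punctured digon, once-punctured triangle, once-punctured rectangle, and twice-punctured monogon, the formula $t_{max}=2n-\tfrac{c+m}{2}$ is given by Corollary 4.4 of \cite{BY} (quoted in the excerpt just before the statement). The four exceptional surfaces have to be treated separately: the twice-punctured monogon falls under Lemma \ref{78}, giving $t_{max}=6$; the once-punctured digon falls under Lemma \ref{78}, giving $t_{max}=0$; and the once-punctured triangle and once-punctured rectangle are enumerated directly by listing triangulations, giving $t_{max}=3$ and $t_{max}=5$ respectively (these are the small cases made explicit in the piecewise formula for $t_{max}$ stated before Lemma \ref{max+min}). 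The forth-punctured sphere is handled by Lemma \ref{bulianxu}, giving $t_{max}=12$.

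Next I would handle $t_{min}$. For all surfaces outside the four exceptional ones (twice-punctured monogon, once-punctured digon, twice-punctured digon, forth-punctured sphere), the value is already computed in Lemma \ref{max+min} by splitting into the four geometric cases according to $g$, $b$, $p$, and $c$, and by exhibiting an explicit optimal triangulation via Minimum Constructions I--IV. For the four exceptional surfaces, $t_{min}$ is read off from Lemmas \ref{78} and \ref{bulianxu}: $4$ for the twice-punctured monogon, $0$ for the once-punctured digon, $4$ for the twice-punctured digon, and $8$ for the forth-punctured sphere.

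The main point of the proof is therefore bookkeeping: I would go through each row of Table \ref{type} and match it against the correct citation (Corollary 4.4 of \cite{BY}, Lemma \ref{max+min}, Lemma \ref{78}, or Lemma \ref{bulianxu}), and in the cases $g=0,b=1,p=1,c=3$ and $g=0,b=1,p=1,c=4$ verify directly that the small list of triangulations produces $t_{max}=3$ and $t_{max}=5$. The step that needs the most care is making sure the exceptional cases are not double-counted: the surfaces excluded from Corollary 4.4 of \cite{BY} are not exactly the same set as those excluded from Lemma \ref{max+min}, so I would lay out a single case split at the start of the proof (e.g.\ by $(g,b,p,c)$) and, for each case, indicate which lemma supplies $t_{max}$ and which supplies $t_{min}$, so that every row of the table is accounted for exactly once.
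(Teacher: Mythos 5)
Your proposal is correct and takes essentially the same route as the paper: the paper's proof is a one-line combination of Corollary 4.4 of \cite{BY} for $t_{max}$ with Lemma \ref{78}, Lemma \ref{bulianxu}, and Lemma \ref{max+min} for the remaining entries. Your more explicit bookkeeping of the exceptional small surfaces (once-punctured triangle/rectangle, twice-punctured monogon and digon, forth-punctured sphere) is exactly the case-matching the paper leaves implicit in ``the proof follows easily.''
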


\begin{proof}
   The maximum number $t_{max}$ is given in Corollary 4.4 of \cite{BY}.
   Combining Lemma \ref{78}, Lemma \ref{bulianxu}, Lemma \ref{max+min}, the proof follows easily.
\end{proof}

\subsection{The existence of a complete walk}

The continuity proof of Theorem \ref{main} with respect to exchange cluster quivers arising from surfaces is based on the following Lemma \ref{perfect1}, the main ideal of continuity proof is to establish the existence of a complete walk for a surface.
We first introduce some notations, which are used throughout the proof of Lemma \ref{perfect1}.

\begin{Definition}
We call a walk a \textbf{complete walk} (or an \textbf{extended complete walk}) in the graph $\mathbf{E^\circ(S,M)}$ of $\mathbf{S}$ if this walk is acyclic and the set of the numbers of arrows for exchange cluster quivers (or (extended exchange cluster quivers) associated with triangulations on this walk  equals to exactly the distribution set $W_\mathbf{S}$ (or $\tilde{W}_\mathbf{S}$).
\label{perfect}
\end{Definition}
On a complete walk, triangulations equipped with the same number of arrows may appear.

Let $l(T)$ be a walk starting from $T$, then a composite walk $l_m\circ\cdots\circ l_1(T_1)$ is
\begin{figure}[H]
  \centering
  \includegraphics[width=7.5cm]{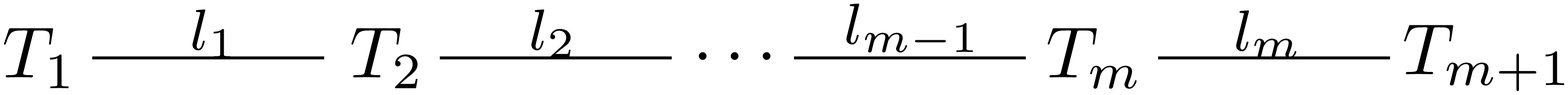}
\end{figure}
\noindent
where the triangulation $T_i$ is called the start-point of walk $l_i$ and the end-point of walk $l_{i-1}(i=2,\cdots,m)$, $T_1$ is called the start-point of the composite walk, and $T_{m+1}$ is called the end-point of the composite walk.
Let $\Phi_i$ be the set $\{t\ |\ t=|Q_1|$, where $Q$ is the cluster quiver associated with the triangulation on the walk $l_i(T_i)$ without the start-point$\}$ where $i=2,3,\cdots,m$. And $\Phi_1=\{t\ |\ t=|Q_1|$, where $Q$ is the cluster quiver associated with the triangulation on the walk $l_1(T_1)\}$.

Some types of flips listed in the Table \ref{walk-f} will appear on a complete walk, where $f_{K,(k,k')}$ represent a flip of type $K$ in the graph in which points labeled by $k$ and $k'$ are located, and unlabeled side represents that it can be a frozen or a mutable side.
In each row, we list types of flips, two local graphs of triangulations related by that flip, and the absolute value of the change in the number of arrows of exchange cluster quivers under that flip.
We use $\sum_{i=a}^{i=b}f_{K,(i,s)}$ to indicate $f_{K,(b,s)} \circ\cdots\circ f_{K,(a,s)}$, and $f_{K,(k,k')}|_{cond}$ to indicate that do the flip only if the condition is satisfied.
\vspace{-0.5cm}
\begin{table}[H]
\centering
\includegraphics[width=16cm]{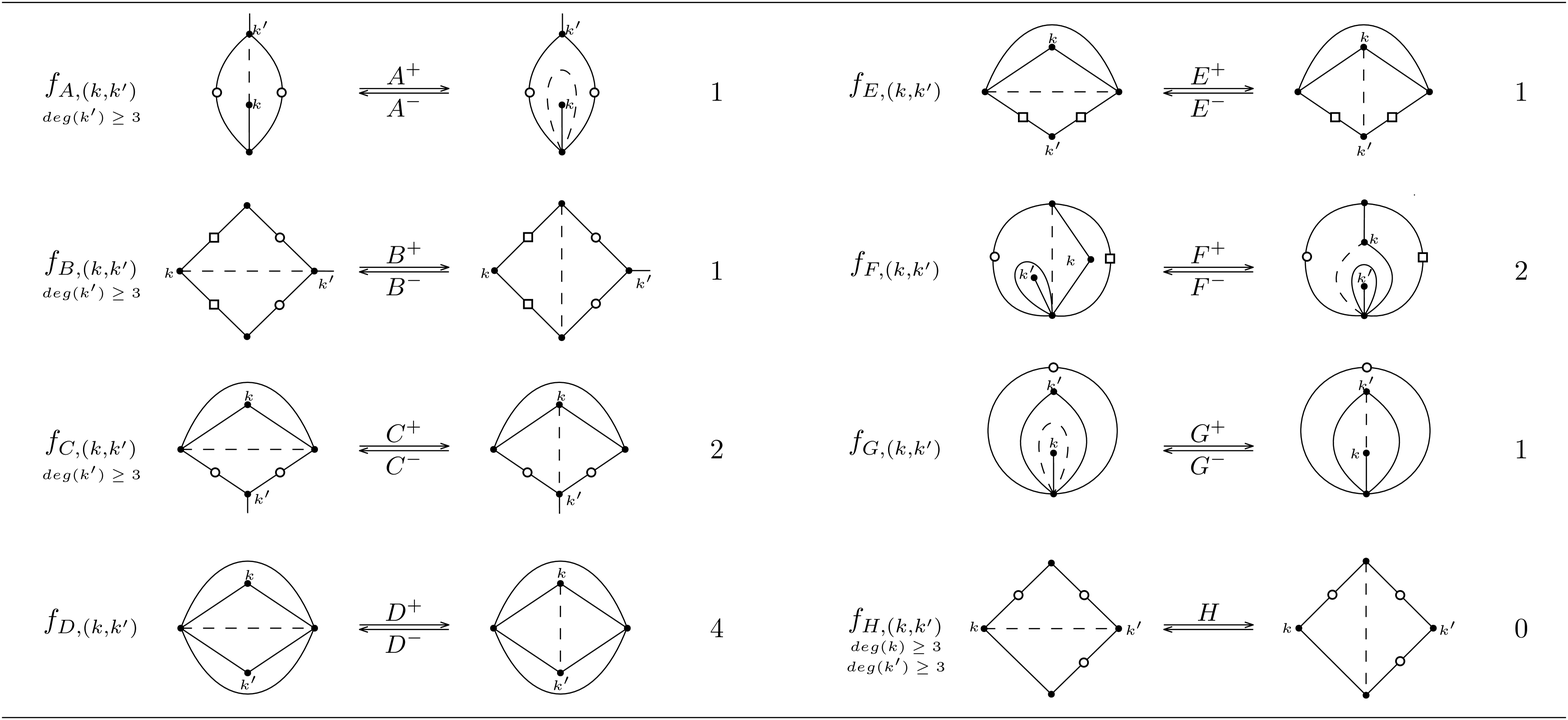}
\caption{Some types of flips}
\label{walk-f}
\end{table}
\vspace{-0.5cm}

\begin{Lemma}
There is a complete walk in graph $\mathbf{E^\circ(S,M)}$ of a surface which is not a twice-punctured digon or a forth-punctured sphere.
\label{perfect1}
\end{Lemma}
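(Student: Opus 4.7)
The plan is to produce, for each surface $\mathbf{S}$ not on the excluded list, an explicit composite walk $l_m\circ\cdots\circ l_1(T_1)$ in $\mathbf{E^\circ(S,M)}$ whose start-point $T_1$ is a triangulation equipped with $t_{\min}$ arrows (chosen to be one of the outputs of Minimum Construction I--IV from the proof of Lemma \ref{max+min}) and whose end-point $T_{m+1}$ is equipped with $t_{\max}$ arrows, arranged so that $\Phi_1\cup\Phi_2\cup\cdots\cup\Phi_m = [t_{\min},t_{\max}]$. Each $l_i$ will be a sequence of elementary flips drawn from Table \ref{walk-f}; since those flips change $t$ by at most $2$, the task reduces to choosing the flips so that every integer in $[t_{\min},t_{\max}]$ is hit and so that no triangulation is revisited.

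First I would dispose of the small cases. The twice-punctured monogon and once-punctured digon are handled by Lemma \ref{78}: Figure \ref{2p-mon} already exhibits an explicit acyclic sequence of flips whose image in $\mathbf{N}$ is $\{4,5,6\}$ (resp.\ $\{0\}$). For every other surface I would split along the case structure of Lemma \ref{max+min} (Case 1--Case 4). In each case the starting triangulation $T_1$ produced by the corresponding Minimum Construction has a very rigid shape: a single ``core'' triangulation of the underlying polygon or snipped surface, decorated by a family of pieces $\triangle_0$ absorbing the punctures (and, when $b\geq 1$, by one or two isolated pieces $\triangle_0$ replacing frozen sides).

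The walk then proceeds in three stages whose atomic steps are read off Table \ref{walk-f}. Stage~A: dismantle the family of pieces $\triangle_0$ one puncture at a time by the flip on the distinguished arc of each self-folded triangle; each such flip turns one $\triangle_0$ into a $\triangle_5$ and, by the formula $t=t_2+3t_3+5t_4+2t_5+8t_6-2t_0$, raises $t$ by a controlled amount which can be refined to a sequence of single-arrow increments by inserting the $|\Delta t|=1$ flip type on a neighbouring arc. Stage~B: convert each remaining $\triangle_5$ (and any stray $\triangle_4$ or $\triangle_6$) into $\triangle_3$ or $\triangle_2$ via the flips in Table \ref{walk-f} whose effect is $|\Delta t|\in\{1,2\}$, choosing the order so that a $|\Delta t|=2$ flip is always sandwiched between two $|\Delta t|=1$ flips that fill the integer it would otherwise skip. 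Stage~C: when the configuration consists only of pieces $\triangle_1,\triangle_2,\triangle_3$, perform the boundary flips (the rows of Table \ref{walk-f} involving $\triangle_1$) labelled cyclically along each borderline using the Labeling Procedure, which successively eliminates pieces $\triangle_1$ and raises $t$ by one until the triangulation saturates the bound $t_{\max}=2n-\tfrac{c+m}{2}$ from Corollary 4.4 of \cite{BY}. The concatenation $l_3\circ l_2\circ l_1(T_1)$ is then the required complete walk.

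The main obstacle is twofold. First, showing that no integer in $[t_{\min},t_{\max}]$ is skipped: the flips in Table \ref{walk-f} with $|\Delta t|=2$ cannot be used in isolation near a potential gap, so one must verify that the Labeling/Linking procedure always leaves enough ``room'' (a free piece $\triangle_2$ or a degree-$3$ boundary point) to insert a $|\Delta t|=1$ correction flip at each transition $\Phi_i\rightsquigarrow\Phi_{i+1}$. Second, ensuring acyclicity: this is arranged by keeping track of a lexicographic invariant $(t_0,t_1,t_4+t_6,t_5)$ which is nonincreasing along Stage~A, then Stage~B, then Stage~C, and strictly decreases whenever a flip could otherwise return to a previous triangulation. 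The surfaces excluded from the statement are exactly those for which some Stage collapses: the twice-punctured digon and the four-punctured sphere each force $(t_0)_{\max}<p$ and leave no puncture free to host the corrective $|\Delta t|=1$ flip, producing the gaps $\{5\}$ and $\{11\}$ identified in Lemma \ref{bulianxu}; in all remaining surfaces the construction goes through.
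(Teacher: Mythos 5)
Your overall strategy coincides with the paper's: build, for each surface type, an explicit composite walk from a $t_{\min}$-triangulation to a $t_{\max}$-triangulation using the flips of Table \ref{walk-f}, and check that the sets $\Phi_i$ union to the whole interval. But as written your argument has two genuine gaps, and in both places the mechanism you propose is not the one that actually works. First, the gap-filling. You claim that whenever a $|\Delta t|=2$ flip threatens to skip an integer you can ``sandwich'' it between two $|\Delta t|=1$ correction flips, and that one only needs to verify the Labeling/Linking procedure leaves ``room'' for this. You never establish that such a correction flip exists at each intermediate triangulation, and in general it does not: once the punctures are packed into a family of pieces $\triangle_0$, the flips available on the arcs carrying the punctures all change $t$ by $2$ (types $C^{\pm}$), and there is no nearby arc whose flip changes $t$ by $1$ without disturbing the configuration. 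The paper's actual device is different and global: each walk $l_1$ climbs by steps of $2$ and records only one parity class above some threshold (e.g.\ $\Phi_1=[4p-8,4p-6]\cup\{4p-4+2s\}$ in Case 1); a short connecting walk $l_2$ then shifts parity by an odd amount, and $l_3$ descends again by steps of $2$, so that the ascending and descending arithmetic progressions interleave to cover every integer. Without this interleaving idea (or a proof that a $|\Delta t|=1$ flip is always locally available, which would require its own case analysis), the covering claim $\bigcup_i\Phi_i=[t_{\min},t_{\max}]$ is unsupported.

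Second, acyclicity. Your proposed lexicographic invariant $(t_0,t_1,t_4+t_6,t_5)$ being nonincreasing along the walk is false for the walks one actually needs: the descending stages reintroduce pieces $\triangle_0$ (flips of type $C^-$ increase $t_0$), so $t_0$ is not monotone along the composite walk. The paper instead verifies acyclicity by hand: on each composite walk it lists exactly which triangulations carry the same number of arrows (typically two, sometimes $p$ of them) and checks directly that they are pairwise non-isotopic, hence distinct vertices of $\mathbf{E^\circ(S,M)}$. Your exclusion of the twice-punctured digon and four-punctured sphere is also justified only by a heuristic; the correct justification is the direct enumeration of Lemma \ref{bulianxu}, which exhibits the gaps $5$ and $11$ in the respective distribution sets. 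In short, the skeleton of your plan is right, but the two steps on which the lemma actually hinges --- that the interval is covered without gaps and that the walk never revisits a triangulation --- are each asserted via a mechanism that fails, and the surface-by-surface constructions that would repair them are absent.
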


\begin{proof}
We will proceed case by case for different types of surfaces.

For the twice-punctured monogon, a complete walk is $T_1-T_2-T_3$, where $T_i$ is the triangulation shown in Figure \ref{2p-mon}. For the once-punctured digon, a complete walk is a vertex in $\mathbf{E^\circ(S,M)}$.

\textbf{Case1:} For $\mathbf{S_1}$ where $g=0, b=0, p\geq5, c=0$, let $T_1$ be a triangulation shown below.
Then two triangulations $T_2=l_1(T_1)$ and $T_3=l_2(T_2)$ shown below will appear on the following walks in (\ref{1.1}), (\ref{1.2}), (\ref{1.3}):
\begin{figure}[H]
\centering
\includegraphics[width=10cm]{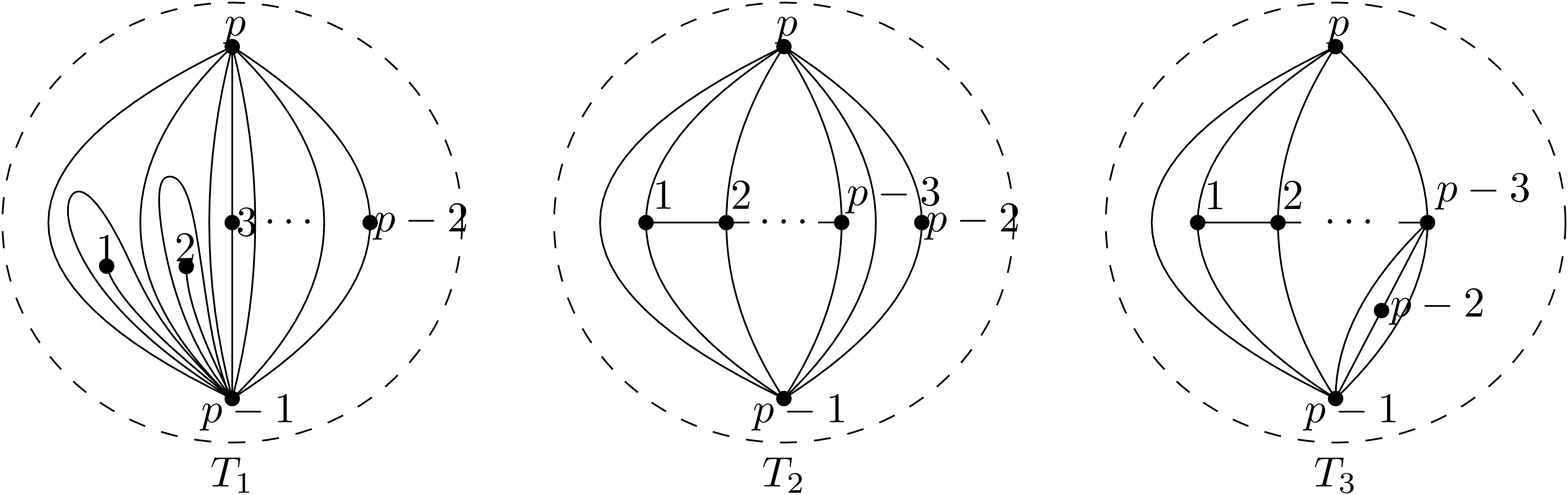}
\end{figure}
\vspace{-0.7cm}
Firstly, we have
\begin{equation}\label{1.1}
l_1(T_1)=(\sum\nolimits_{i=3}^{i=p-3}f_{C^+,(i,i-1)})|_{p\geq6}\circ f_{D^+,(1,2)}\circ f_{A^-,(2,p)}\circ f_{A^-,(1,p)}(T_1),
\end{equation}
then $\Phi_1=[4p-8,4p-6]\cap\mathbb{N}\cup\{4p-4+2s|s=0,\cdots,p-5\}$;

Secondly, we have
\begin{equation}\label{1.2}
l_2(T_2)=f_{C^-,(p-2,p)}\circ f_{C^+,(p-2,p-3)}(T_2),
\end{equation}
then $\Phi_2=\{6p-14,6p-12\}$;

Thirdly, we have
\begin{equation}\label{1.3}
l_3(T_3)=\sum\nolimits_{i=1}^{i=p-4}f_{C^-,(i,i+1)}\circ f_{A^+,(p-2,p-3)}(T_3),
\end{equation}
then $\Phi_3=\{6p-13-2s|s=0,\cdots,p-4\}$.

Thus we obtain $\Phi_1\cup\Phi_2\cup\Phi_3=[4p-8,6p-12]$.

It is easy to see that on this composite walk, there are only two triangulations $T_2$ and $T_3$ equipped with the same number of arrows , which is $6p-14$. While they are obviously not isotopy.

Hence $l_3\circ l_2\circ l_1(T_1)$ is a complete walk for $\mathbf{S_1}$.

\textbf{Case2:} For $\mathbf{S_2}$ where $g=0, b\geq1, p\geq3, c=1$, let $T_1$ be a triangulation equipped with $t_{min}$ arrows shown below. Then two triangulations $T_2=l_1(T_1)$ and $T_3=l_2(T_2)$ shown below will appear on the following walks in (\ref{2.1}), (\ref{2.2}), (\ref{2.3}):
\begin{figure}[H]
\centering
\includegraphics[width=10cm]{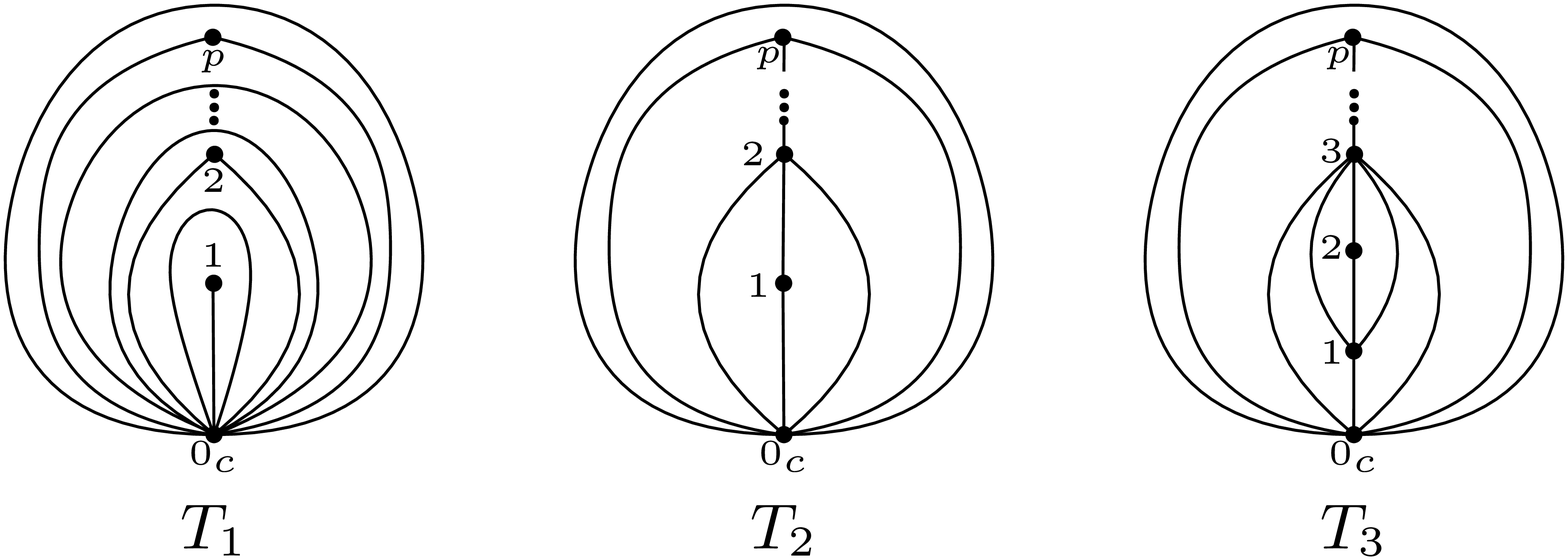}
\end{figure}
\vspace{-0.7cm}

Firstly, we have
\begin{equation}\label{2.1}
l_1(T_1)=\sum\nolimits_{i=3}^{i=p}f_{C^+,(i,i-1)}\circ f_{G^+,(1,2)}(T_1),
\end{equation}
then $\Phi_1=\{4p-4,4p-3+2s|s=0,\cdots,p-2\}$;

Secondly, we have
\begin{equation}\label{2.2}
l_2(T_2)=f_{C^-,(2,0_c)}\circ f_{C^+,(1,3)}(T_2),
\end{equation}
then $\Phi_2=\{6p-7,6p-5\}$;

Thirdly, we have
\begin{equation}\label{2.3}
l_3(T_3)=(\sum\nolimits_{i=p}^{i=4}f_{C^-,(i,i-1)})|_{p\geq4}\circ f_{C^-,(1,0_c)}\circ f_{A^+,(2,1)}(T_3),
\end{equation}
then $\Phi_3=\{6p-6-2s|s=0,\cdots,p-2\}$.

Thus we obtain $\Phi_1\cup\Phi_2\cup\Phi_3=[4p-4,6p-5]$.

It is easy to see that on this composite walk, there are only two triangulations $T_2$ and $T_3$ with the same number of arrows, which is $6p-7$. While they are obviously not isotopy.

Hence $l_3\circ l_2\circ l_1(T_1)$ is a complete walk for $\mathbf{S_2}$ .

\textbf{Case3:} \textbf{1)} For the surface where $g=0,b=1,p=1,c=3$, let $T_l$ be a triangulation shown on the left side of the figure below, then $f_{E^+,(1,1_c)}(T_l)$ is a complete walk.

\textbf{2)} For the surface where $g=0,b=1,p=1,c=4$, let $T_l$ be a triangulation shown on the right side of the figure below, then $f_{E^-,(1,3_c)}\circ f_{C^+,(1,2_c)}(T_r)$ is a complete walk.
\begin{figure}[H]
\centering
\includegraphics[width=6.5cm]{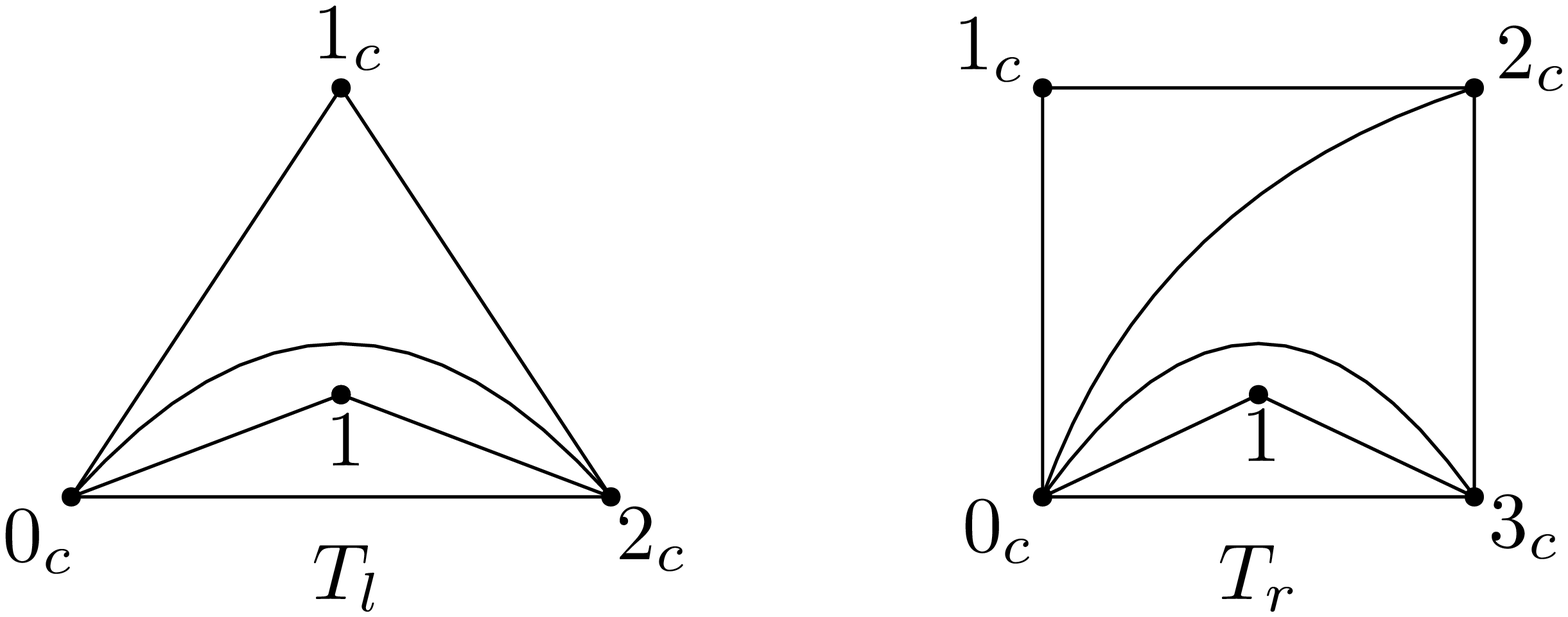}
\end{figure}

\textbf{3)} For the surface where $g=0,b=1,p=0,c\geq2$, $t_{min}=c-4$ and $t_{max}=\frac{3}{2}c-6-\frac{m}{2}$. It is easy to see that the triangulation $T$ obtained by processing linking procedure for the surface can translate into the triangulation equipped with $t_{max}$ arrows by $\frac{c-m}{2}-2$ times flips of type $B^+$. Thus $l_{p=0}(T)=(\sum\nolimits_{i=1}^{i={\frac{c-m}{2}}-2}f_{B^+,((2i+1)_c,0_c)})|_{c\geq6}(T)$ is a complete walk.

\textbf{4)} For the surface where $g=0,b=1,p=1,c\geq5$, $t_{min}=c-1$ and $t_{max}=\frac{3}{2}c-\frac{m}{2}$. At this time, $c\geq5$ (recall that $c=3$ and $c=4$ have been discussed before).
Let $T_1$ be a triangulation equipped with $t_{min}$ arrows shown below.
Then a walk staring from $T_1$ is $l_1(T_1)=l_{p=0}\circ f_{E^-,(1,1_c)}\circ f_{C^+,(1,2_c)}(T_1)$,
so that $\Phi_1$ is $[c-1,\frac{3}{2}c-\frac{m}{2}-2]$.

We consider the local triangulation of $T_2=l_1(T_1)$ shown below containing the piece $\triangle_0$ in which the puncture labeled by 1 lies. In this rectangle, the degree of point labeled by $k$ is at least 3 (if the degree is 2, when $k$ is a boundary point, we have $c=4$, which is a contradiction; when $k$ is a puncture, while any puncture of degree 2 is in a piece $\triangle_0$, which is a contradiction), so that we can do the flip of type $C$.
Then the walk $l_2(T_2)=f_{A^+,(1,k)}\circ f_{C^-,(1,2_c)}\circ f_{C^+,(1,k)}(T_2)$ is to flip in this rectangle and keep the other part unchanged, so that $\Phi_2$ is $[\frac{3}{2}c-\frac{m}{2}-1,\frac{3}{2}c-\frac{m}{2}]$.
\begin{figure}[H]
\centering
\includegraphics[width=8cm]{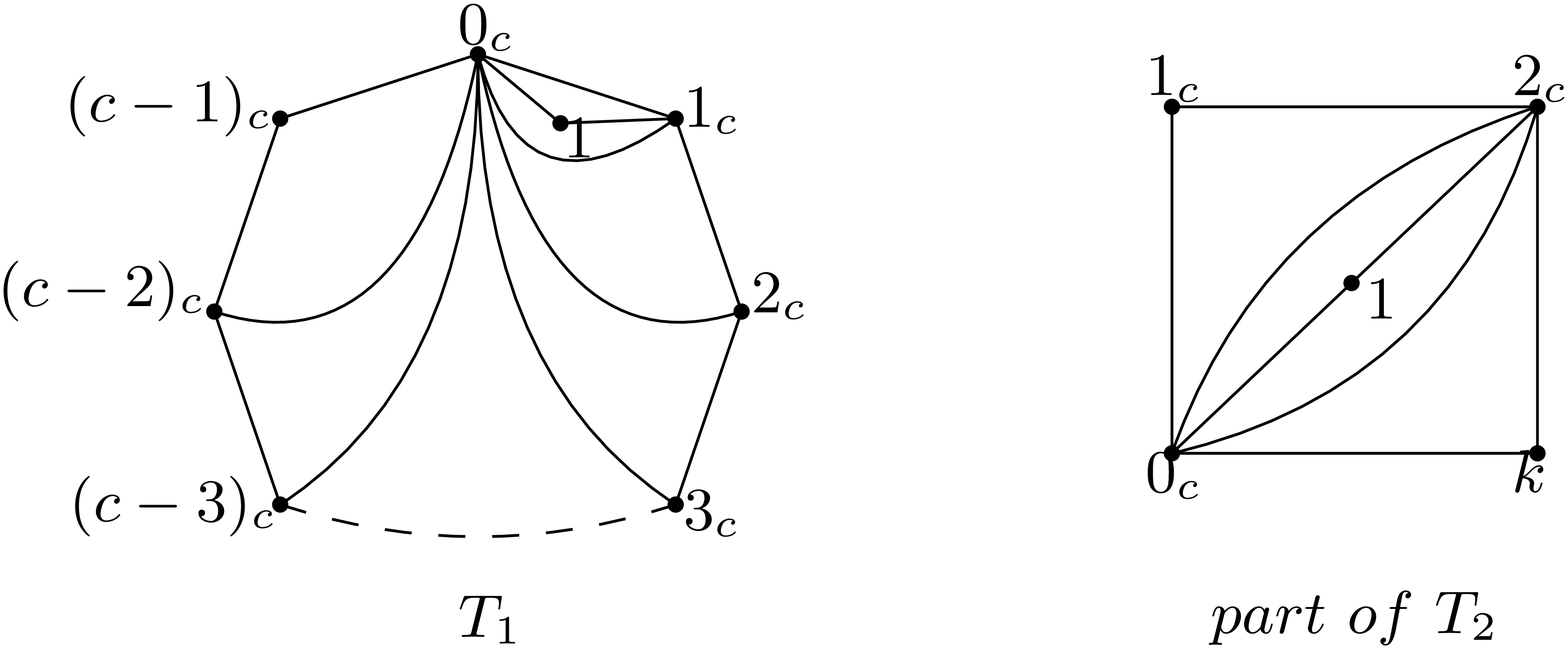}
\end{figure}
Thus we obtain $\Phi_1\cup\Phi_2=[c-1,\frac{3}{2}c-\frac{m}{2}]$.

On this composite walk, there are only two triangulations $T_2$ and $ f_{C^-,(1,2_c)}\circ f_{C^+,(1,k)}(T_2)$ equipped with the same number of arrows, which is $\frac{3}{2}c-\frac{m}{2}-2$. While they are obviously not isotopy.

Hence $l_2\circ l_1(T_1)$ is a complete walk.

\textbf{5)} For the surface where $g=0,b=1,p\geq3,c=2$, $t_{min}=4p-4$ and $t_{max}=6p-3$. Let $T_1$ be a triangulation shown below.
Then two triangulations $T_2=l_1(T_1)$ and $T_3=l_2(T_2)$ shown below will appear on the following walks in (\ref{3.4.1}), (\ref{3.4.2}), (\ref{3.4.3}):

\begin{figure}[H]
\centering
\includegraphics[width=12cm]{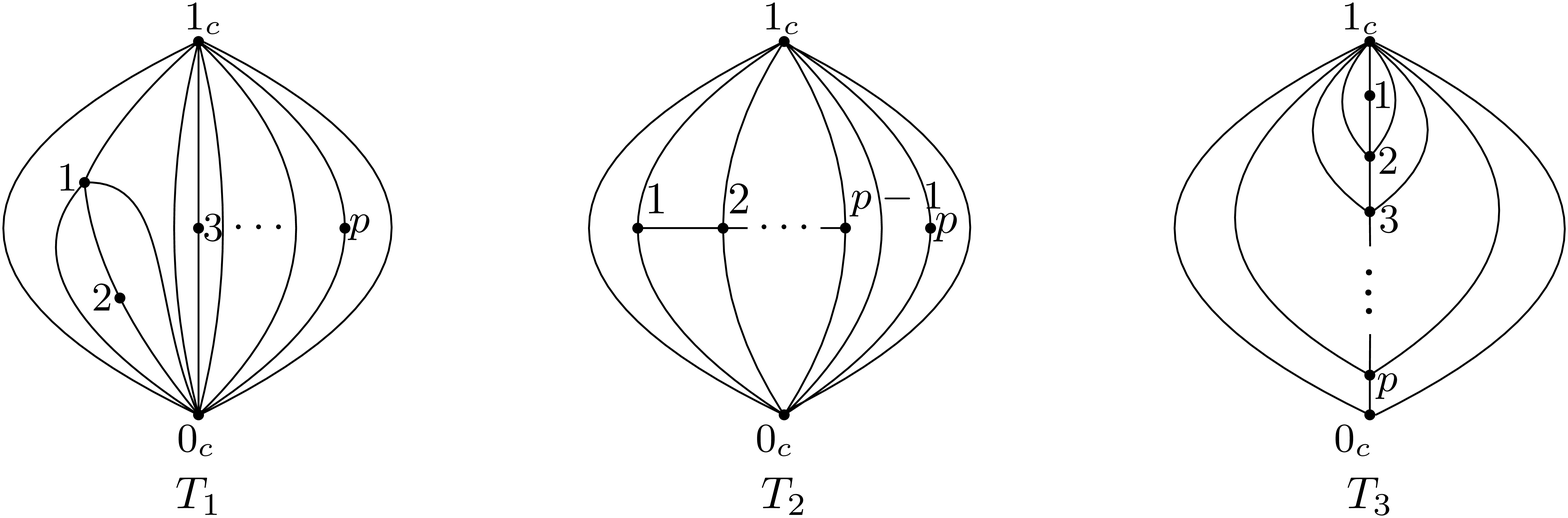}
\end{figure}
\vspace{-0.7cm}

Firstly, we have
\begin{equation}\label{3.4.1}
l_1(T_1)=(\sum\nolimits_{i=3}^{i=p-1}f_{C^+,(i,i-1)})|_{p\geq4}\circ f_{D^+,(1,2)}\circ f_{A^-,(2,1_c)}\circ f_{F^-,(1,2)}\circ f_{A^+,(2,1)}(T_1),
\end{equation}
then $\Phi_1=[4p-4,4p]\cup\{4p+2s|s=1,\cdots,p-3\}$;

Secondly, we have
\begin{equation}\label{3.4.2}
l_2(T_2)= \sum\nolimits_{i=2}^{i=p-1}f_{H,(i,0_c)}\circ f_{C^-,(1,0_c)}\circ f_{C^+,(p,p-1)}(T_2),
\end{equation}
then $\Phi_2=\{6p-6,6p-4\}$;

Thirdly, we have
\begin{equation}\label{3.4.3}
l_3(T_3)=f_{C^-,(2,1_c)} \circ (\sum\nolimits_{i=p}^{i=4}f_{C^-,(i,i-1)})|_{p\geq4}\circ f_{C^+,(1,3)}\circ f_{B^+,(0_c,p)}(T_3),
\end{equation}
then $\Phi_3=\{6p-3-2s|s=0,\cdots,p-2\}$.

Thus we obtain $\Phi_1\cup\Phi_2\cup\Phi_3=[4p-4,6p-3]$.

On this composite walk, there are $p$ triangulations equipped with the same number $6p-6$ of arrows, which are respectively $T_2,\ f_{C^-,(1,0_c)}\circ f_{C^+,(p,p-1)}(T_2),\ f_{H,(2,0_c)}\circ f_{C^-,(1,0_c)}\circ f_{C^+,(p,p-1)}(T_2),\cdots, T_3$. While they are obviously not isotopy.
Besides, two triangulations $f_{B^+,(0_c,p)}(T_3)$ and $f_{C^-,(p,p-1)}\circ f_{C^+,(1,3)}\circ f_{B^+,(0_c,p)}(T_3)$ equipped with the same number $6p-5$ of arrows are obviously not isotopy.

Hence $l_3\circ l_2\circ l_1(T_1)$ is a complete walk.

\textbf{6)} For the surface where $g=0,b=1,p\geq2,c=3$, $t_{min}=4p-3$ and $t_{max}=6p-2$. Let $T_1$ be a triangulation equipped with $t_{min}$ arrows shown below.
Then two triangulations $T_2=l_1(T_1)$ and $T_3=l_2(T_2)$ shown below will appear on the following walks in (\ref{3.5.1}), (\ref{3.5.2}), (\ref{3.5.3}):
\begin{figure}[H]
\centering
\includegraphics[width=10cm]{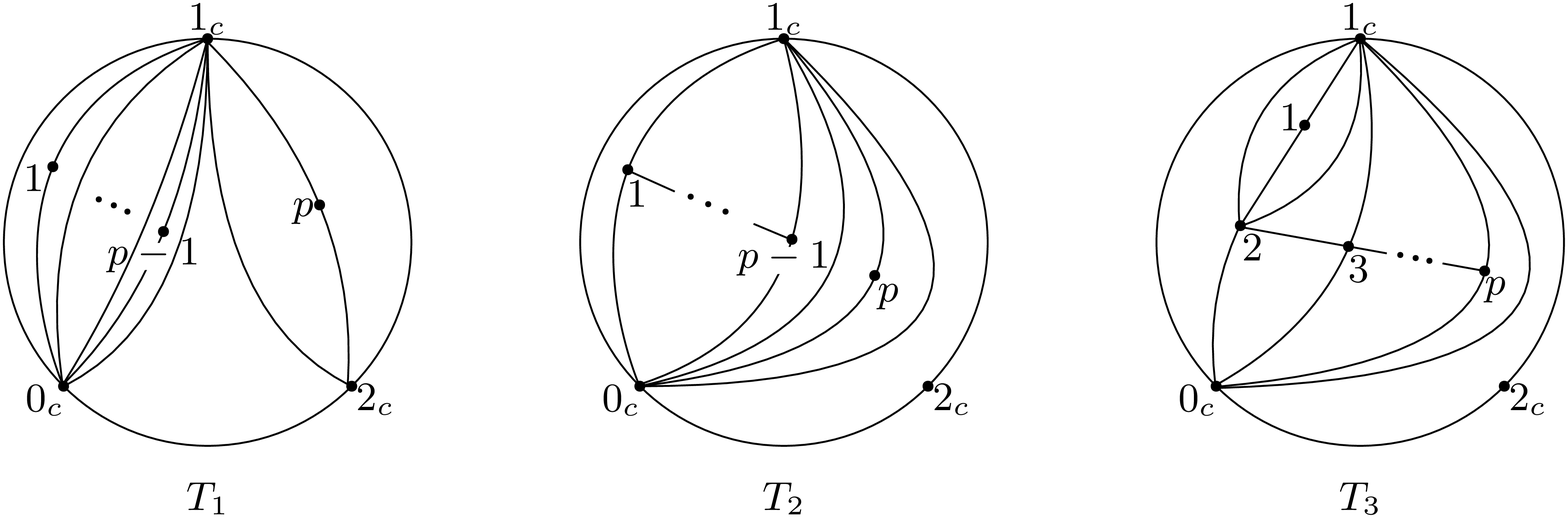}
\end{figure}
Firstly, we have
\begin{equation}\label{3.5.1}
l_1(T_1)=(\sum\nolimits_{i=3}^{i=p-1}f_{C^+,(i,i-1)})|_{p\geq4}\circ f_{D^+,(1,2)}\circ f_{E^-,(p,2_c)}\circ f_{C^+,(p,0_c)}(T_1),
\end{equation}
then $\Phi_1=[4p-3,4p-1]\cup\{4p+2+2s|s=0,\cdots,p-3\}$;

Secondly, we have
\begin{equation}\label{3.5.2}
l_2(T_2)=f_{C^-,(1,0_c)}\circ (f_{C^+,(p,p-1)})|_{p\geq3}(T_2),
\end{equation}
then $\Phi_2=\{6p-4,6p-2\}$;

Thirdly, we have
\begin{equation}\label{3.5.3}
l_3(T_3)=(f_{A^-,(1,1_c)} \circ\sum\nolimits_{i=p}^{i=3}f_{C^-,(i,i-1)})|_{p\geq3}\circ f_{A^+,(1,1_c)}(T_3),
\end{equation}
then $\Phi_3=\{4p,6p-3-2s|s=0,\cdots,p-2\}$.

Thus we obtain $\Phi_1\cup\Phi_2\cup\Phi_3=[4p-3,6p-2]$.

On this composite walk, there are only two triangulations $T_2$ and $T_3$ equipped with the same number of arrows, which is $6p-4$. While they are obviously not isotopy.

Hence $l_3\circ l_2\circ l_1(T_1)$ is a complete walk.

\textbf{7)} For the surface where $g=0,b=1,p\geq2,c=4$, $t_{min}=4p-2$ and $t_{max}=6p$. Let $T_1$ be a triangulation equipped with $t_{min}$ arrows shown below.
Then two triangulations $T_2=l_1(T_1)$ and $T_3=l_2(T_2)$ shown below will appear on the following walks in (\ref{3.6.1}), (\ref{3.6.2}), (\ref{3.6.3}):
\begin{figure}[H]
\centering
\includegraphics[width=13cm]{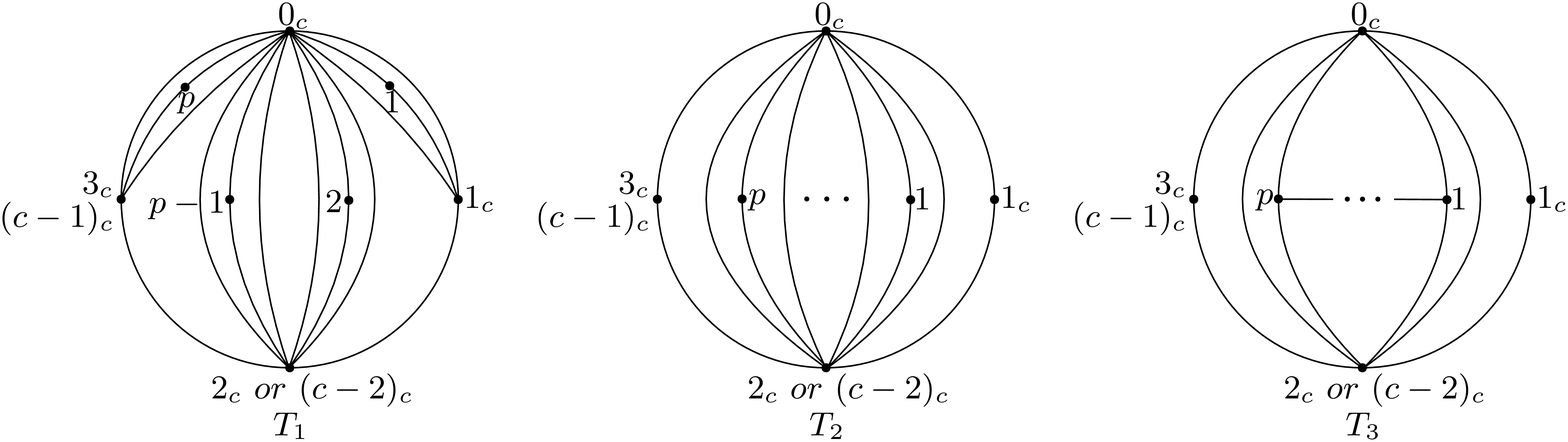}
\end{figure}

Firstly, we have
\begin{equation}\label{3.6.1}
l_{p\geq2}(T_1)=f_{E^-,(p,(c-1)_c)}\circ f_{C^+,(p,(c-2)_c)}\circ f_{E^-,(1,1_c)}\circ f_{C^+,(1,2_c)}(T_1),
\end{equation}
then $\Phi_1=[4p-2,4p+1]$;

Secondly, we have
\begin{equation}\label{3.6.2}
l_2(T_2)=(\sum\nolimits_{i=3}^{i=p}f_{C^+,(i,i-1)})|_{p\geq3}\circ f_{D^+,(1,2)}(T_2),
\end{equation}
then $\Phi_2=\{4p+4+2s|s=0,\cdots,p-2\}$;

Thirdly, we have
\begin{equation}\label{3.6.3}
l_3(T_3)=f_{A^+,(2,0_c)} \circ \sum\nolimits_{i=p}^{i=2}f_{C^-,(i,i-1)}\circ f_{B^-,(1_c,1)}(T_3),
\end{equation}
then $\Phi_3=\{4p+2,6p-1-2s|s=0,\cdots,p-1\}$.

Thus we obtain $\Phi_1\cup\Phi_2\cup\Phi_3=[4p-2,6p]$.

On this composite walk, there are two tuples of triangulations equipped with the same number of arrows, that is, $T_2$ and $f_{C^+,(1,2_c)}(T_1)$, $f_{E^+,(p,3_c)}(T_2)$ and $f_{A^-,(2,0_c)}\circ l_3(T_3)$. While $T_2$ and $f_{C^+,(1,2_c)}(T_1)$ are obviously not isotopy, $f_{E^+,(p,3_c)}(T_2)$ and $f_{A^-,(2,0_c)}\circ l_3(T_3)$ are mirror symmetry which are different vertices in $\mathbf{E^\circ(S,M)}$.

Hence $l_3\circ l_2\circ l_{p\geq2}(T_1)$ is a complete walk.

\textbf{8)} For the surface where $g=0,b=1,p\geq2,c\geq5$, $t_{min}=4p+c-6$ and $t_{max}=6p+\frac{3}{2}c-\frac{m}{2}-6$. Let $T_1$ be a triangulation equipped with $t_{min}$ arrows shown below. Then a walk staring from $T_1$ is $l_1(T_1)=l_{p=0}\circ l_{p\geq2}(T_1)$, so that $\Phi_1$ is $[4p+c-6,4p+\frac{3}{2}c-\frac{m}{2}-6]$.

We consider two local triangulations of $T_2=l_1(T_1)$ shown below. One is a rectangle containing the piece $\triangle_0$ in which the puncture labeled by $p$ lies, the other is a rectangle containing a family of pieces $\triangle_0$ with $p-1$ punctures. In these rectangles, the degree of point labeled by $k'$ or $k$ is at least 3 (the reason is consistent with the above proof in the surface of Case 3.4), so that we can do the flip of type $C$. Then the following walks in (\ref{3.7.1}) and (\ref{3.7.2}) are to flip in these rectangles and keep the other part unchanged.\\
\begin{figure}[H]
\centering
\includegraphics[width=12cm]{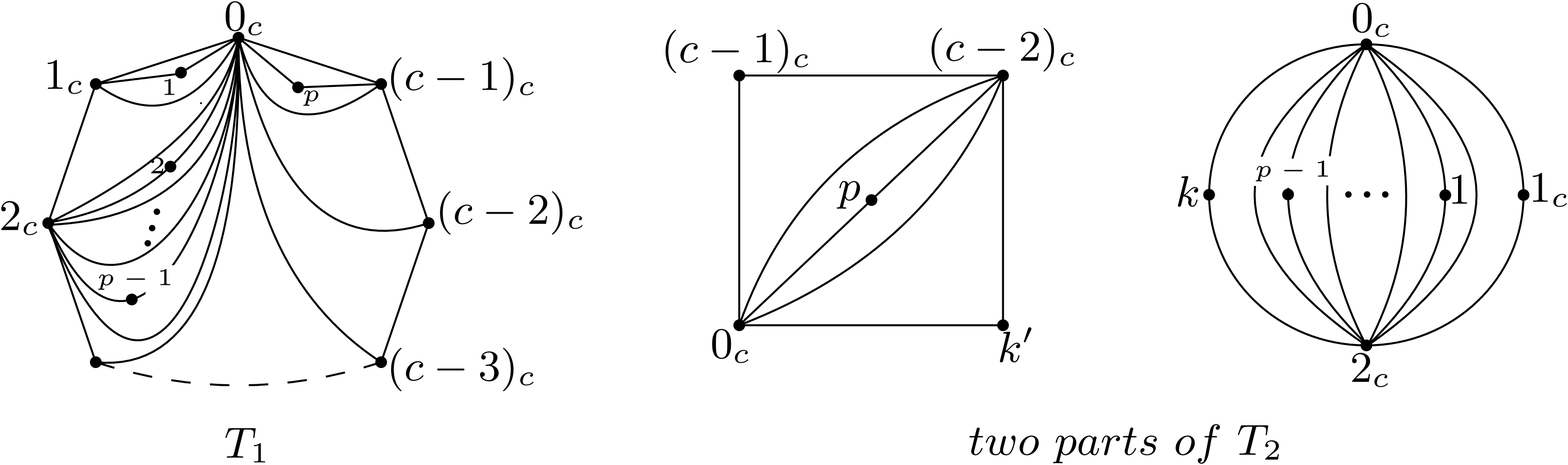}
\end{figure}
Firstly, we have
\begin{equation}\label{3.7.1}
l_2(T_2)=(\sum\nolimits_{i=p-2}^{i=1}f_{C^+,(i,i+1)})|_{p\geq3}\circ f_{C^+,(p-1,k)}\circ f_{C^+,(p,k')}(T_2),
\end{equation}
then $\Phi_2=\{4p+\frac{3}{2}c-\frac{m}{2}-6+2s|s=1,\cdots,p\}$;

Secondly, we have
\begin{equation}\label{3.7.2}
l_3(T_3)=f_{C^-,(p,k')} \circ (\sum\nolimits_{i=p-1}^{i=2}f_{C^-,(i,i-1)})|_{p\geq3}\circ f_{H,(p-1,k)}\circ f_{B^-,(1_c,1)}(T_3),
\end{equation}
where $T_3=l_2(T_2)$, then $\Phi_3=\{6p+\frac{3}{2}c-\frac{m}{2}-7-2s|s=0,\cdots,p-1\}$.

Thus we obtain $\Phi_1\cup\Phi_2\cup\Phi_3=[4p+c-6,6p+\frac{3}{2}c-\frac{m}{2}-6]$.

On this composite walk, there are two tuples of triangulations equipped with the same number of arrows, that is, $l_{p\geq2}(T_1)$ and $f_{C^+,(1,2_c)}(T_1)$, two triangulations related by the flip of type $H$. While they are obviously not isotopy.

Hence $l_3\circ l_2\circ l_1(T_1)$ is a complete walk.

\textbf{Case4:} For $\mathbf{S_4}$ where $g\geq1,b=0,p\geq1,c=0$, let $T_1$ be a triangulation obtained by gluing the block shown below where $p-1$ punctures lie and a number of pieces $\triangle_3$.
Then the following walks in (\ref{4.1}) and (\ref{4.2}) are to flip in that block and keep the other part unchanged.\\
\begin{figure}[H]
\centering
\includegraphics[width=3.5cm]{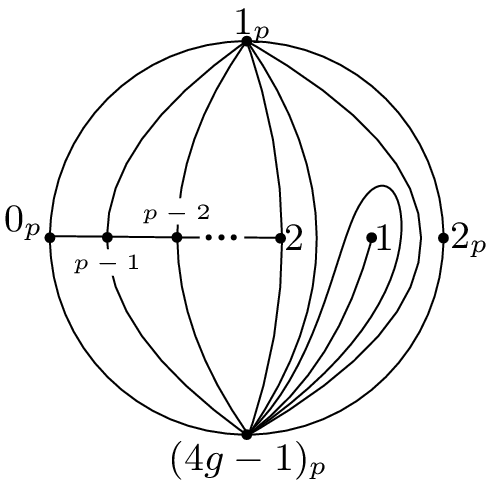}
\end{figure}
Firstly, we have
\begin{equation}\label{4.1}
l_1(T_1)=f_{A^-,(1,1_p)}\circ (f_{C^-,(p-1,0_p)})|_{p\geq3} \circ (\sum\nolimits_{i=2}^{i=p-2}f_{C^-,(i,i+1)})|_{p\geq4}(T_1),
\end{equation}
then $\Phi_1=\{2n-2p+2,2n-2p+3+2s|s=0,\cdots,p-2\}$;

Secondly, we have
\begin{equation}\label{4.2}
l_2(T_2)=(\sum\nolimits_{i=2}^{i=p-1}f_{C^+,(i,i-1)})|_{p\geq3}\circ f_{C^+,(1,2_p)}(T_2),
\end{equation}
where $T_2=l_1(T_1)$, then $\Phi_2=\{2n-2p+2+2s|s=1,\cdots,p-1\}$.

Thus we obtain $\Phi_1\cup\Phi_2=[2n-2p+2,2n]$. And any triangulation is equipped with different number of arrows on this composite walk.

Hence $l_2\circ l_1(T_1)$ is a complete walk.

\textbf{Case5:} For $\mathbf{S_5}$ where $g=0,b\geq2,p\geq0,c\geq2$ or $g\geq1,b\geq1,p\geq0,c\geq1$, in the triangulation equipped with $t_{min}$ arrows, there is no piece $\triangle_1$ and all the punctures lie in the interior of a family of $\triangle_0$.
In the triangulation equipped with $t_{max}$ arrows which is $2n-\frac{c+m}{2}$, the number of pieces $\triangle_1$ is $\frac{c-m}{2}$ and the degree of any puncture is at least 3.

Let $T_1$ be a triangulation equipped with $t_{max}$ arrows satisfying two conditions:
($A$) each piece $\triangle_1$ is glued to a piece $\triangle_3$ whose endpoints are not punctures, while two various pieces $\triangle_1$ can not be glued to the same piece $\triangle_3$;
($B$) when $p\geq2$, all the punctures lie in the block shown in the left of three graphs below; when $p=1$, the puncture lies in a piece $\triangle_3$ glued by a piece $\triangle_1$ and is of degree 3.



Such $T_1$ of $\mathbf{S_5}$ can be obtained by the following construction:
1) proceed labeling procedure on surface $\mathbf{S'_5}$;
2) on a boundary $j$ of $\mathbf{S'_5}$ with $c_j$ marked points, if $c_j$ is odd, link $i-2$ with $i$ for $i=2,4,\cdots,c_j-1$; otherwise, link $i-2$ with $i$ for $i=2,4,\cdots,c_j$ ($c_j$ is the point labeled by 0). Make sure that after linking, there is a piece $\triangle_1$ whose endpoints are $i-2,\ i-1$, and $i$.
3) choose a suitable point on the other borderline of $\mathbf{S'_5}$ and link it with two boundary points which are endpoints of the mutable side of a piece $\triangle_1$;
4) supplement arcs to get a triangulation of $\mathbf{S'_5}$;
5) if $p\geq2$, replace an arc of $\mathbf{S'_5}$ whose endpoints are on different borderlines with the block; if $p=1$, choose a piece $\triangle_3$ glued by a piece $\triangle_1$ and link it with all three endpoints of that $\triangle_3$.

There are $\frac{c-m}{2}$ pieces $\triangle_1$ in $T_1$. Because $T_1$ satisfies condition $A$, we can make $\frac{c-m}{2}$ times flips of type $B^-$. Then we get $l_1(T_1)$, so that $\Phi_1$ is $[2n-c,2n-\frac{c+m}{2}]$.

When $p=0$, $l_1(T_1)$ is a complete walk.

When $p=1$, we consider the local triangulation of $T_2=l_1(T_1)$ containing the puncture, shown in the middle of three graphs below. Then $f_{A^+,(p,v_1)}\circ f_{C^-,(p,v_4)}\circ f_{H,(p,v_2)}\circ l_1(T_1)$ is a complete walk.

When $p\geq2$, we consider the local triangulation of $T_2$ shown in the right of three graphs below containing the block with $p$ punctures.
In this rectangle, vertices labeled by $v_i(i=1,\cdots,4)$ are boundary points which may be the same, satisfying that $v_2$ and $v_4$ are not the junction points of two frozen sides and the degree of $v_i$ is at least 3.
Then the following walks in (\ref{5.1}) and (\ref{5.2}) are to flip in this rectangle and keep the other part unchanged.\\
\begin{figure}[H]
\centering
\includegraphics[width=10.5cm]{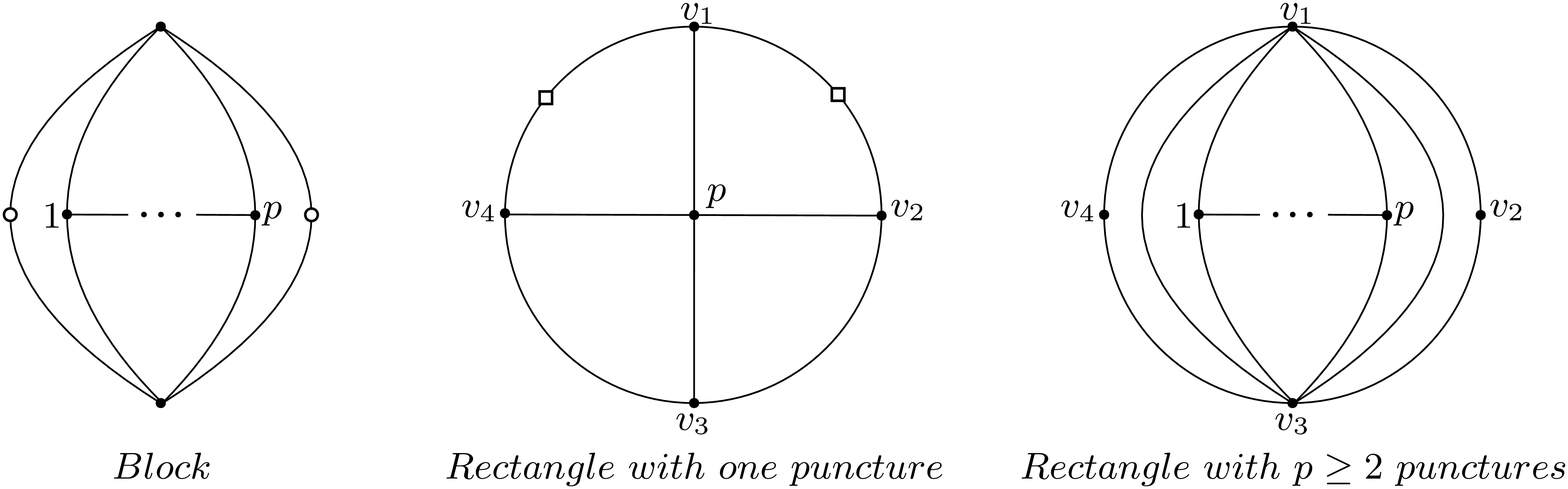}
\end{figure}
Firstly, we have
\begin{equation}\label{5.1}
l_2(T_2)=f_{C^-,(p,v_2)} \circ \sum\nolimits_{i=1}^{i=p-1}f_{C^-,(i,i+1)}\circ f_{H,(p,v_2)}(T_2),
\end{equation}
then $\Phi_2=\{2n-c-2s|s=1,\cdots,p\}$;

Secondly, we have
\begin{equation}\label{5.2}
l_3(T_3)=(\sum\nolimits_{i=2}^{i=p-1}f_{C^+,(i,i-1)})|_{p\geq3}\circ f_{C^+,(1,v_4)} \circ f_{A^+,(p,v_1)}(T_3),
\end{equation}
where $T_3=l_2(T_2)$, then $\Phi_3=\{2n-c-2p+1+2s|s=0,\cdots,p-1\}$.

Thus we obtain $\Phi_1\cup\Phi_2\cup\Phi_3=[2n-c-2p,2n-\frac{c+m}{2}]$. And on this composite walk, there are only two triangulations equipped with the same number of arrows, which are related by the flip of type $H$. While they are obviously not isotopy.

Hence $l_3\circ l_2\circ l_1(T_1)$ is a complete walk.
\end{proof}

\begin{Corollary}
For a surface $\mathbf{S}$ which is not the twice-punctured digon or the forth-punctured sphere, the distribution set for $\mathbf{S}$ with respect to $Q$ is continuous.
\label{surface}
\end{Corollary}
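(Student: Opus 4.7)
The plan is to derive this corollary essentially as an immediate consequence of Lemma \ref{perfect1}, since all the hard work is done there. The argument decomposes into three clean steps: exhibit a walk, read off its set of arrow counts, and identify this set with $W_\mathbf{S}$.

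To begin with, for a surface $\mathbf{S}$ satisfying the hypotheses, I would invoke Lemma \ref{perfect1} to obtain a complete walk $l = l_m \circ \cdots \circ l_1$ in $\mathbf{E^\circ(S,M)}$. By Proposition \ref{f-m} together with Proposition \ref{prop-FS}, the flip graph $\mathbf{E^\circ(S,M)}$ encodes $\mathrm{Mut}[Q_T]$, so every element of $W_\mathbf{S}$ is the arrow count of some $Q_T$ as $T$ ranges over triangulations of $\mathbf{S}$.

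By Definition \ref{perfect}, the set of arrow counts of the quivers $Q_T$ for $T$ lying on $l$ already equals $W_\mathbf{S}$. On the other hand, the case-by-case construction in the proof of Lemma \ref{perfect1} established the explicit identification $\Phi_1 \cup \cdots \cup \Phi_m = [t_{min}, t_{max}]$ in each case. Combining these two facts gives $W_\mathbf{S} = [t_{min}, t_{max}]$, an integer interval, which is precisely the continuity condition of Definition \ref{continuous}.

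The main obstacle, such as it is, was really already surmounted inside Lemma \ref{perfect1}; here the only verification one still needs is that the values $t_{min}$ and $t_{max}$ recorded in Proposition \ref{num-t} coincide with the extrema realized along the walks used in Lemma \ref{perfect1}, which can be inspected directly from the starting triangulations chosen in each case. The excluded surfaces (the twice-punctured digon and the forth-punctured sphere) are precisely the cases already shown in Lemma \ref{bulianxu} to have non-interval distribution sets, so no complete walk could possibly exist for them, consistent with their exclusion from Lemma \ref{perfect1}.
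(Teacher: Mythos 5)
Your proposal is correct and takes the same route the paper intends: the paper states this corollary with no separate proof, treating it as immediate from Lemma \ref{perfect1}, and you correctly identify that the statement of that lemma alone is not enough --- one must also use the fact, established case by case inside its proof, that the arrow-count set along each constructed complete walk is the integer interval $[t_{min},t_{max}]$, which by Definition \ref{perfect} equals $W_\mathbf{S}$. One small caveat: your closing remark that no complete walk could exist for the twice-punctured digon or the forth-punctured sphere does not follow, since Definition \ref{perfect} only requires the walk to realize all of $W_\mathbf{S}$, not that $W_\mathbf{S}$ be an interval; this aside plays no role in your argument, which is otherwise sound.
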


The proof of Lemma \ref{main} with respect to exchange cluster quivers of exceptional types is based on the following result.

\begin{Lemma}
Let $Q$ be an exchange cluster quiver of exceptional type $E_6,\ E_7,\ E_8,\ \widetilde{E}_6,\ \widetilde{E}_7$, $\widetilde{E}_8,\ E_6^{(1,1)},\ E_7^{(1,1)}$ or $E_8^{(1,1)}$, then the distribution set for $Mut[Q]$ is continuous.
\label{exp}
\end{Lemma}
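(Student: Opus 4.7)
The plan is to handle the nine exceptional types uniformly by brute enumeration, exploiting the fact that each mutation equivalence class $Mut[Q]$ is explicitly finite and known. For each of $E_6, E_7, E_8, \widetilde{E}_6, \widetilde{E}_7, \widetilde{E}_8, E_6^{(1,1)}, E_7^{(1,1)}, E_8^{(1,1)}$, I would first pin down the integers $t_{min}$ and $t_{max}$ realized in the class (these can be read off a direct enumeration or, for the classical ones, obtained by inspection of the Dynkin/affine/doubly-extended diagram representatives). The task then reduces to exhibiting, for each integer $t$ with $t_{min}\leq t \leq t_{max}$, at least one quiver in $Mut[Q]$ having exactly $t$ arrows.

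To produce every intermediate value, I would mimic the approach of Lemma \ref{perfect1}: starting from the canonical (tree) representative, which realizes $t_{min}$, construct a concrete walk in the mutation graph by applying single mutations one at a time and tabulating the resulting arrow count. Each mutation at a vertex $k$ changes the number of arrows by a small, easily controlled amount; by choosing the next vertex $k$ to mutate at a vertex of low valence or incident to a $2$-arrow one can usually force the count to move up or down by $1$. Concatenating such walks and recording the set $\{|Q'_1|\}$ of arrow counts along the way gives the full interval $[t_{min},t_{max}]$. For the smaller types $E_6,\widetilde{E}_6,E_6^{(1,1)}$ this can be displayed as a short explicit sequence; for the larger ones the same construction works verbatim, or one may simply invoke the known finite list of quivers in $Mut[Q]$ (for instance, via Keller's quiver mutation applet) and verify that the set of arrow counts is an interval.

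The main obstacle is that the surface-based technology of puzzle pieces and flips, on which Sections 3.1--3.3 rely, is entirely unavailable here, so the proof becomes genuinely case-by-case with no unifying combinatorial model; one has to organize the verification carefully to avoid inspecting every single quiver in the larger classes $E_8^{(1,1)}$, $\widetilde{E}_8$, etc. I would therefore concentrate the effort on constructing, for each type, one explicit walk of length $t_{max}-t_{min}$ whose consecutive arrow counts differ by $1$; such a walk immediately certifies continuity of the distribution set and compresses the nine verifications into nine short exhibits. Once these walks are written out, the conclusion of Lemma \ref{exp} follows directly from Definition \ref{continuous}.
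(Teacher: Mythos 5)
Your proposal is essentially the paper's own argument: the authors likewise treat the lemma as a finite verification, computing the set of arrow counts for each exceptional class by complete enumeration (using Mills' lists of all quivers in the classes other than $E_6,E_7,E_8,X_7$, and Keller's quiver-mutation applet for the rest) and checking directly that each set is an interval. One small inaccuracy in your write-up: the doubly-extended types $E_n^{(1,1)}$ have no tree representative (their minimal arrow counts exceed $n-1$), so the walk cannot start from a ``canonical tree''; but since you also allow reading $t_{min}$ and $t_{max}$ off the enumeration, this does not affect the substance.
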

\begin{proof}
Mills in \cite{M} used Sage and obtained all quivers in each exceptional finite mutation equivalence classes except for $E_6,E_7,E_8$ and $X_7$, and on webpage \cite{Mweb} he provided the corresponding skew-symmetric matrixes of quivers. Thanks to his work, we compute the numbers of arrows of quivers in each exceptional mutation equivalence class except for $E_6,E_7,E_8$ and $X_7$ by Python and Matlab.
For each type, there are quivers with the same number of arrows and we show one quiver for each number.
We denote $Q^{(i)}$ as the $i$th quiver numbered by Mills in the mutation equivalence class, and $W_K$ as the distribution set for $Mut[Q_K]$ where $Q_K$ is of exceptional type $K$.
\begin{align}\begin{split}
\ \ W_{\widetilde{E}_6}
  &=\{|Q^{(0)}_1|,|Q^{(3)}_1|,|Q^{(4)}_1|,|Q^{(20)}_1|,|Q^{(19)}_1|,|Q^{(60)}_1|,|Q^{(40)}_1|\}\\
  &=\{6,7,8,9,10,11,12\}\nonumber\\
\ \ W_{\widetilde{E}_7}
  &=\{|Q^{(0)}_1|,|Q^{(3)}_1|,|Q^{(8)}_1|,|Q^{(33)}_1|,|Q^{(95)}_1|,|Q^{(220)}_1|,|Q^{(568)}_1|,|Q^{(907)}_1|\}\\
  &=\{7,8,9,10,11,12,13,14\}\nonumber\\
\ \ W_{\widetilde{E}_8}
  &=\{|Q^{(0)}_1|,|Q^{(2)}_1|,|Q^{(18)}_1|,|Q^{(54)}_1|,|Q^{(161)}_1|,|Q^{(303)}_1|,|Q^{(1691)}_1|,|Q^{(4500)}_1|,|Q^{(6669)}_1|\}\\
  &=\{8,9,10,11,12,13,14,15,16\}\nonumber\\
\ \ W_{E_6^{(1,1)}}
  &=\{|Q^{(32)}_1|,|Q^{(6)}_1|,|Q^{(0)}_1|,|Q^{(24)}_1|,|Q^{(11)}_1|\}\\
  &=\{9,10,11,12,13\}\nonumber\\
\ \ W_{E_7^{(1,1)}}
  &=\{|Q^{(0)}_1|,|Q^{(2)}_1|,|Q^{(1)}_1|,|Q^{(10)}_1|,|Q^{(22)}_1|,|Q^{(43)}_1|,|Q^{(42)}_1|,|Q^{(374)}_1|\}\\
  &=\{9,10,11,12,13,14,15,16\}\nonumber \\
\ \ W_{E_8^{(1,1)}}
&=\{|Q^{(0)}_1|,|Q^{(3)}_1|,|Q^{(1)}_1|,|Q^{(11)}_1|,|Q^{(28)}_1|,|Q^{(102)}_1|,|Q^{(856)}_1|,|Q^{(2237)}_1|,|Q^{(5737)}_1|\}\\
  &=\{10,11,12,13,14,15,16,17,18\}\nonumber\\
\ \ W_{X_6}
  &=\{|Q^{(0)}_1|,|Q^{(1)}_1|\}\\
  &=\{9,11\}\nonumber
\end{split}\end{align}

For $E_6,E_7,E_8$ and $X_7$, our main tool is Keller's quiver mutation in Java \cite{Java}.
There are 21, 112, 391 and 2 quivers in the mutation equivalence class(up to isomorphism) of type $E_6,E_7,E_8$ and $X_7$ respectively. Then we compute the numbers of arrows of all quivers in each exceptional mutation equivalence class and have the following result:
$$W_{E_6}=\{5,6,7,8,9\},\ W_{E_6}=\{5,6,7,8,9\},\ W_{E_8}=\{7,8,9,10,11,12,13\},\ W_{X_6}=\{12,15\}.$$
\end{proof}
Based on the above discussion, we have the following theorem.
\begin{Theorem}
Let $Q$ be an exchange cluster quiver of finite mutation type, then the distribution set for $Mut[Q]$ is continuous unless:
\begin{enumerate}[(i)]
  \item $Q$ is of type $X_6$ or $X_7$;
  \item $Q$ arises from the twice-punctured digon $\mathbf{S_{2p-dig}}$ or the forth-punctured sphere $\mathbf{S_{4p-sphere}}$.
\end{enumerate}
More specifically, we have
$$W_{\mathbf{S_{2p-dig}}}=\{4,6,7,8\},\ W_{\mathbf{S_{4p-sphere}}}=\{8,9,10,12\},\ W_{X_6}=\{9,11\},\ W_{X_7}=\{12,15\}.$$
\label{1}
\end{Theorem}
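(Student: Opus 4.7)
The proof is essentially an assembly of the case analyses established in the preceding lemmas, organized along the classification of finite mutation type quivers due to Felikson--Shapiro--Tumarkin. The plan is to split into the cases (a), (b), (c) of their classification theorem and invoke the appropriate result in each case.

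First I would dispense with the small-rank case $n \leq 2$: a quiver on at most two vertices has no vertex at which mutation changes the number of arrows (a single arrow is simply reversed), so $W$ is a single point and trivially continuous. Next I would handle the surface case (b). By Corollary \ref{surface}, if $Q$ arises from a surface $\mathbf{S}$ which is neither the twice-punctured digon $\mathbf{S_{2p\text{-}dig}}$ nor the four-punctured sphere $\mathbf{S_{4p\text{-}sphere}}$, then the existence of a complete walk established in Lemma \ref{perfect1} immediately yields that the distribution set $W_\mathbf{S}$ is an interval $[t_{\min}, t_{\max}]$, hence continuous. The two remaining surfaces are the exceptions listed in (ii), and for them Lemma \ref{bulianxu} gives the explicit non-continuous sets $W_{\mathbf{S_{2p\text{-}dig}}}=\{4,6,7,8\}$ and $W_{\mathbf{S_{4p\text{-}sphere}}}=\{8,9,10,12\}$, which are precisely the values appearing in the theorem statement.

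Finally, for the exceptional case (c), I would invoke Lemma \ref{exp}, which, through a direct enumeration via Mills' data for the nine types $E_6, E_7, E_8, \widetilde{E}_6, \widetilde{E}_7, \widetilde{E}_8, E_6^{(1,1)}, E_7^{(1,1)}, E_8^{(1,1)}$ and Keller's Java applet for the remaining two, shows that each of these nine mutation classes has a continuous distribution set, while the computations $W_{X_6}=\{9,11\}$ and $W_{X_7}=\{12,15\}$ from the same enumeration exhibit the gaps asserted in (i). Collecting the three cases, every $Q$ of finite mutation type falls into the continuous regime except those listed in (i) and (ii), and the explicit distribution sets for the four exceptional classes match the values in the final displayed equation.

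The main obstacle has in fact already been surmounted in the preceding sections: it is the construction of the complete walks in Lemma \ref{perfect1}, whose delicate case analysis (across surface types) is what converts the combinatorial computation of $t_{\min}$ and $t_{\max}$ into the stronger statement that every intermediate value is realized. Given that lemma, the proof of Theorem \ref{1} itself is a short bookkeeping argument, and the only thing to double-check is that the $X_6$ and $X_7$ classes truly have exactly two arrow-counts (which is immediate from their small size, $2$ quivers up to isomorphism each).
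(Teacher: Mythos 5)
Your proof follows the paper's argument exactly: the paper likewise combines Lemma \ref{bulianxu}, Corollary \ref{surface} and Lemma \ref{exp} with the observation that rank-$2$ quivers have constant arrow count under mutation, so this is the same decomposition with the same supporting results. The only slip is your parenthetical claim that $X_6$ and $X_7$ each have $2$ quivers up to isomorphism (that count belongs to $X_7$ alone; the $X_6$ class is larger), but this does not affect the argument since $W_{X_6}=\{9,11\}$ is taken directly from the enumeration in Lemma \ref{exp}.
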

\begin{proof}
  Combining Lemma \ref{bulianxu}, Corollary \ref{surface}, Lemma \ref{exp} and the fact that for the exchange cluster quiver $Q$ where $|Q_0|=2,\ |Q_1|=n$, the number of arrows for $Mut[Q]$ obviously presents a continuous distribution. Then the conclusion is complete proved.
\end{proof}

\section{The proof of Theorem \ref{main} with respect to extended exchange cluster quivers}
As we have said in the introduction, according to the calculation formulas of the numbers of arrows of $Q$ and $\tilde{Q}$ arising from surfaces (see Corollary \ref{calculation} and Proposition \ref{calculate-exQ}), the conditions for the numbers of arrows to be extremum in $Mut[Q]$ and $Mut[\tilde{Q}]$ are different, as well as so do the triangulations associated with quivers in $Mut[Q]$ and $Mut[\tilde{Q}]$ respectively. Therefore, we need to divide our discussion into two cases, namely with respect to $Q$ and $\tilde{Q}$ respectively. In particular, when $b=0$ for the surface, the conclusions for the two cases are the same.

In this section, we give the proof of Theorem \ref{main} with respect to extended exchange cluster quivers.

\subsection{Two lemmas}
We first discuss the distribution sets $\tilde{W}_\mathbf{S}$ for several special surfaces.
\begin{Lemma}
The distribution sets for $\mathbf{S_{2p-mon}}$ and $\mathbf{S_{2p-mon}}$ with respect to $\tilde{Q}$ are continuous, where $\mathbf{S_{2p-mon}}$ is the twice-punctured monogon and $\mathbf{S_{1p-dig}}$ is the once-punctured digon.
\label{ex-78}
\end{Lemma}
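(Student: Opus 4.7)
The plan is to mirror the enumeration argument used for the exchange cluster quiver case in Lemma \ref{78}. Both surfaces are small enough that the set of ideal triangulations up to isotopy is finite and explicit, so the distribution set $\tilde{W}_{\mathbf{S}}$ can be computed by direct inspection. The only new ingredient, compared to Lemma \ref{78}, is that now we also record the arrows between mutable vertices (arcs) and frozen vertices (boundary sides); these are dictated by the rules for $\tilde{Q}_T$ recalled in Section 2.2, namely that an arc $\gamma$ and a boundary side $\beta$ lying in a common triangle contribute one arrow between their vertices, in the clockwise sense of the triangle's boundary.

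First I would handle $\mathbf{S_{1p\textrm{-}dig}}$. Since Lemma \ref{78} gives $W_{\mathbf{S_{1p\textrm{-}dig}}}=\{0\}$, there is essentially one isotopy class of ideal triangulation (two arcs forming a self-folded triangle glued into the digon). Building $\tilde{Q}_T$ and tallying the mutable-to-frozen arrows induced by the two boundary sides produces a single value of $\tilde{t}$, so $\tilde{W}_{\mathbf{S_{1p\textrm{-}dig}}}$ is a singleton and hence trivially continuous.

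Second I would treat $\mathbf{S_{2p\textrm{-}mon}}$. Starting from the three representative triangulations $T_1,T_2,T_3$ already exhibited in Figure \ref{2p-mon}, I would compute $\tilde{t}(T_i)$ for each by adding, to the arrow counts $4,5,6$ furnished by Lemma \ref{78}, the arrows coming from the unique boundary side. If these three extended counts are already three consecutive integers, continuity is immediate. Otherwise I would use connectedness of $\mathbf{E^\circ(S,M)}$ (Proposition by FS) to flip from the $T_i$ to further isotopy classes, applying Proposition \ref{f-m} to track the change in $\tilde{t}$ flip by flip, until every integer between $\min_i \tilde{t}(T_i)$ and $\max_i \tilde{t}(T_i)$ is realized. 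Because the triangulation space is finite, this search terminates after a bounded check.

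The main obstacle is simply the combinatorial completeness of the enumeration for the twice-punctured monogon: one must verify that, after taking into account isotopy and the 2-cycle cancellation that occurs whenever two triangles share two sides (a real concern in such a small surface), no isotopy class has been omitted and no value of $\tilde{t}$ in the claimed interval has been missed. This is a finite, explicit check, but the most error-prone step; everything else is a mechanical application of the $\tilde{Q}_T$-construction from Section 2.2.
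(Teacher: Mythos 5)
Your proposal matches the paper's proof, which is exactly this enumeration: the authors reuse the three representative triangulations of Figure \ref{2p-mon} to find $\tilde{W}_{\mathbf{S_{2p-mon}}}=\{6,7,8\}$ (three consecutive integers, so your fallback flip-search is never needed), and state directly that $\tilde{W}_{\mathbf{S_{1p-dig}}}=\{4\}$. One small caution: for the once-punctured digon, the fact that $W_{\mathbf{S_{1p-dig}}}=\{0\}$ is a singleton does not by itself imply a single isotopy class of triangulation (nor that all triangulations share the same number of arc-to-boundary arrows), so you should still verify the finitely many triangulations all yield $\tilde{t}=4$ --- the same trivial check the paper elides with ``it is easy to see''.
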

\begin{proof}
For $\mathbf{S_{2p-mon}}$, there are triangulations equipped with the same number arrows. For each number, we list one triangulation in Figure \ref{2p-mon}. Then $\tilde{W}_{\mathbf{S_{2p-mon}}}=\{6, 7, 8\}$.

It is easy to see that the distribution set $\tilde{W}_{\mathbf{S_{1p-dig}}}=\{4\}$.
\end{proof}
\begin{Lemma}
The distribution sets for $\mathbf{S_{1p-tri}}$ and $\mathbf{S_{4p-sphere}}$ with respect to $\tilde{Q}$ are continuous, where $\mathbf{S_{1p-tri}}$ is the once-punctured triangle and $\mathbf{S_{4p-sphere}}$ is the forth-punctured sphere. More specifically,
$$\tilde{W}_{\mathbf{S_{1p-tri}}}=\{6,7,9\},\ \tilde{W}_{\mathbf{S_{4p-sphere}}}=\{8,9,10,12\}.$$
\label{ex-bulianxu}
\end{Lemma}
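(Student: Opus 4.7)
My plan is to establish both distribution-set equalities by direct enumeration of ideal triangulations up to isotopy, converting each triangulation to its associated extended exchange quiver $\tilde{Q}_T$ via the rules of Section~2 and counting the arrows.

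For $\mathbf{S_{4p-sphere}}$ the claim is essentially immediate: this surface has no boundary components, so there are no frozen vertices and $\tilde{Q}_T = Q_T$ for every triangulation $T$. Hence $\tilde{W}_{\mathbf{S_{4p-sphere}}} = W_{\mathbf{S_{4p-sphere}}}$, which is $\{8,9,10,12\}$ by Lemma~\ref{bulianxu}, and in particular has a gap at $11$.

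For $\mathbf{S_{1p-tri}}$ I would first classify the relevant arcs up to isotopy. Since $n=3$ here, any triangulation is a triple of pairwise-compatible arcs. Modulo the rotational symmetry of the boundary vertices $A,B,C$, every arc in such a triangulation falls in one of three families: the three radial arcs $PA,PB,PC$ from the puncture; the three arcs $\gamma_{AB},\gamma_{BC},\gamma_{CA}$ between boundary vertices with winding number one around $P$; and the three loops $\ell_A,\ell_B,\ell_C$ at boundary vertices enclosing $P$. An arc with higher winding around $P$ fails to be compatible with the others, and a loop not enclosing $P$ would bound an empty monogon and hence is forbidden; two distinct loops enclosing $P$ must cross, so at most one loop appears. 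A direct pairwise-compatibility check---using that $\ell_v$ separates $P$ from every boundary vertex other than $v$, and that $\gamma_{XY}$ together with the boundary segment $XY$ isolates $P$ in a digon---yields (up to rotation) exactly three families of ideal triangulations:
(A) the fan $\{PA,PB,PC\}$;
(B) $\{PA,PB,\gamma_{AB}\}$ and its rotations; and
(C) $\{\ell_A,PA,\gamma_{AB}\}$ or $\{\ell_A,PA,\gamma_{CA}\}$ and their rotations.

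To conclude, I would apply the arrow rules in each case. In family (A) the three ordinary ideal triangles pairwise share at most one side and each has one frozen vertex, so each contributes a $3$-cycle of arrows, giving $\tilde{t}=9$. In family (B) the two triangles $\{PA,PB,AB_{\mathrm{bdry}}\}$ and $\{PA,PB,\gamma_{AB}\}$ share the two sides $PA,PB$; the resulting $2$-cycle between $PA$ and $PB$ must be deleted, and after this cancellation the arrow count becomes $3+3+2-2=6$. In family (C) the outer triangle $\{\ell_A,AB_{\mathrm{bdry}},\gamma_{AB}\}$ contributes $3$ arrows, the outer triangle $\{\gamma_{AB},BC_{\mathrm{bdry}},CA_{\mathrm{bdry}}\}$ contributes $2$ arrows (the arrow between the two frozen vertices is omitted), and the fold $PA$ inherits exactly the two arrows incident to $\ell_A$, for a total of $\tilde{t}=3+2+2=7$. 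Collecting gives $\tilde{W}_{\mathbf{S_{1p-tri}}}=\{6,7,9\}$, with a gap at $8$. The main obstacle I anticipate is verifying completeness of the triangulation list, which is why I would be very careful with the separation arguments for winding arcs and loops and with the self-folded-triangle inheritance rule.
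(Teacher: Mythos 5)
Your proposal is correct and follows essentially the same route as the paper: the four-punctured sphere case is handled by the observation that $b=0$ forces $\tilde{Q}_T=Q_T$ and hence $\tilde{W}_{\mathbf{S_{4p-sphere}}}=W_{\mathbf{S_{4p-sphere}}}=\{8,9,10,12\}$ from Lemma \ref{bulianxu}, and the once-punctured triangle is settled by enumerating its triangulations up to isotopy and counting arrows, yielding $6$, $7$, $9$. The only difference is presentational: the paper displays the three triangulation types in Figure \ref{1p-triangle} without justifying completeness of the list, whereas you spell out the arc classification and compatibility checks, which if anything makes the enumeration more convincing.
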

\begin{proof}
For $\mathbf{S_{1p-tri}}$, triangulations are isotopy to one of the graphs shown in the Figure \ref{1p-triangle}. There is no extended exchange cluster quiver with 8 arrows arising from $\mathbf{S_{1p-tri}}$.
\begin{figure}[H]
\centering
\includegraphics[width=9cm]{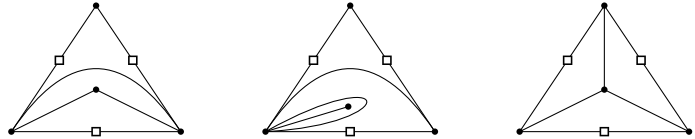}
\caption{Triangulations $T$ of $\mathbf{S_{1p-tri}}$ (up to isotopy) equipped with 6,7,9 arrows from left to right}
\label{1p-triangle}
\end{figure}

For $\mathbf{S_{4p-sphere}}$, because there is no boundary, we have $Mut[\tilde{Q}_\mathbf{S}]=Mut[Q_\mathbf{S}]$, so that $\tilde{W}_{\mathbf{S_{4p-sphere}}}=W_{\mathbf{S_{4p-sphere}}}$.
\end{proof}

By the relation between triangulations and associated extended exchange cluster quivers, we give the number of arrows corresponding to each piece shown in Table \ref{ex-num-ori-tri}, where the negative integer means that when there is a piece $\triangle_0$, the number of arrows should be decreased by 2.
\begin{table}[H]
\centering
\includegraphics[width=16cm]{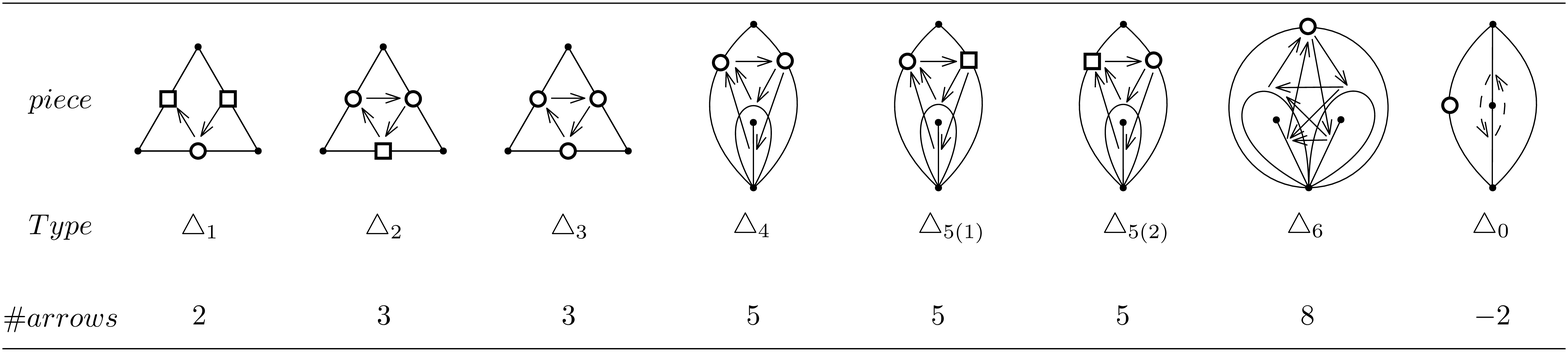}
\caption{The number of arrows for each piece with respect to $\tilde{Q}$}
\label{ex-num-ori-tri}
\end{table}

\begin{Proposition}
For a surface $\mathbf{S}$ with $n$ arcs and $c$ marked points on the boundary, which is not one of the twice-punctured monogon, the once-punctured digon and the forth-punctured sphere, let $t_i$ be the number of the piece $\triangle_i$ in a triangulation $T$ of $\mathbf{S}$. Then the \textbf{calculation formula for the number of arrows of $\tilde{Q}_T$} is $\tilde{t}=2n+c-t_1-t_4-t_5-t_6-2t_0$.
\label{calculate-exQ}
\end{Proposition}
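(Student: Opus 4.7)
My plan is to mirror the proof of Corollary \ref{calculation}, now using the per-piece arrow counts in Table \ref{ex-num-ori-tri} (which include the arrows between mutable and frozen vertices) together with the two bookkeeping identities already established for triangulations built by gluing the nine pieces $\triangle_0,\triangle_1,\dots,\triangle_6$. The hypotheses on $\mathbf{S}$ exclude exactly those surfaces whose triangulations do not admit such a decomposition, so we may assume $T$ is obtained by the puzzle-piece gluing.

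First, I would sum the per-piece contributions from Table \ref{ex-num-ori-tri} to obtain
\[
\tilde{t} \;=\; 2t_1 + 3t_2 + 3t_3 + 5t_4 + 5t_5 + 8t_6 - 2t_0,
\]
where the $-2t_0$ reflects the deletion of the 2-cycle created whenever two triangles share two sides. The only real content in this step is the row-by-row verification of the table: for each piece one counts arrows between all pairs of sides by the clockwise rule, discards any arrow between two frozen sides, and, in the self-folded pieces $\triangle_4,\triangle_5,\triangle_6$, transfers the arrows incident to the loop onto the enclosed arc according to the defining rules of $\tilde{Q}_T$.

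Second, I would import the two counting identities already derived in the proof of Corollary \ref{calculation}: counting boundary points gives $c = 2t_1 + t_2 + t_5$, while balancing arcs against matched-side gluings gives $n - 2(t_4+t_5) - 4t_6 = (t_1+2t_2+3t_3+2t_4+t_5+t_6)/2$, i.e.\
\[
2n \;=\; t_1 + 2t_2 + 3t_3 + 6t_4 + 5t_5 + 9t_6.
\]
Adding these two relations yields $2n + c = 3t_1 + 3t_2 + 3t_3 + 6t_4 + 6t_5 + 9t_6$, and subtracting the right-hand side of the per-piece expression for $\tilde{t}$ leaves exactly $t_1 + t_4 + t_5 + t_6 + 2t_0$, so
\[
\tilde{t} \;=\; 2n + c - t_1 - t_4 - t_5 - t_6 - 2t_0,
\]
as claimed.

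The main obstacle is not the algebra, which is essentially forced once the table is in hand, but rather the careful per-piece check underlying Table \ref{ex-num-ori-tri}. In particular, the self-folded pieces require tracking both the arc-arc arrow between the loop and the radius and the transfer of the loop-arrows to the enclosed arc, and the piece $\triangle_0$ requires checking which 2-cycle actually disappears in $\tilde{Q}_T$ (as opposed to in $Q_T$), since the deleted 2-cycle must sit between two mutable vertices and the arrows incident to frozen vertices must be accounted for consistently.
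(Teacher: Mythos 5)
Your proposal is correct and follows essentially the same route as the paper's proof: summing the per-piece arrow counts from Table \ref{ex-num-ori-tri} to get $\tilde{t}=2t_1+3(t_2+t_3)+5(t_4+t_5)+8t_6-2t_0$, then combining with the same two identities $c=2t_1+t_2+t_5$ and $n-2(t_4+t_5)-4t_6=(t_1+2t_2+3t_3+2t_4+t_5+t_6)/2$ already established for Corollary \ref{calculation}. The algebra you carry out explicitly is exactly what the paper leaves implicit.
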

\begin{proof}
According to the relation(see Table \ref{ex-num-ori-tri}) between the number of arrows of $\tilde{Q}_T$ and the number of arrows for different kinds of pieces in $T$, we have
$$\tilde{t}=2t_1+3(t_2+t_3)+5(t_4+t_5)+8t_6-2t_0.$$
Similar to the previous discussion in Corollary \ref{calculation}, we have
$$c=2t_1+t_2+t_5,
\ n-2(t_4+t_5)-4t_6=(t_1+2t_2+3t_3+2t_4+t_5+t_6)/2.$$
Therefore, we have the calculation formula for the number of arrows of $\tilde{Q}_T$.
\end{proof}

\subsection{The maximum and minimum numbers of arrows of $\tilde{Q}$ arising from surfaces}
Let $\tilde{t}_{max}$ and $\tilde{t}_{min}$ be the maximum and minimum numbers of the distribution set $\tilde{W}_\mathbf{S}$ for a surface $\mathbf{S}$ respectively. We first discuss the optimal condition for the maximum and minimum.

From the calculation formula for the number of arrows of $\tilde{Q}_T$, we can see that the optimal condition of $\tilde{t}_{max}$ arrows is that $t_i=0\ (i=0,1,4,5,6)$. The equivalent condition of that $t_1=0$ is that the degree of each boundary point is at least 3, denoted by \emph{Max1$'$}. Because the degree of the inner puncture of pieces $\triangle_i\ (i=0,4,5,6)$ is 2, thus the equivalent condition of that $t_i=0\ (i=0,4,5,6)$ is that the degree of each puncture is at least 3, denoted by \emph{Max2$'$}.

Thus the optimal situation for the triangulation equipped with $\tilde{t}_{max}$ arrows is that \emph{Max1$'$} and \emph{Max2$'$} hold simultaneously.
\begin{Lemma}
  For a surface where $g=0,b=1$, there is a triangulation satisfying condition \emph{Max1$'$} and \emph{Max2$'$} simultaneously if and only if $p\neq0$ and $p+c\geq4$.
  \label{Max}
\end{Lemma}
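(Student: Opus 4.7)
The plan is to prove both directions by case analysis, using Proposition \ref{calculate-exQ} and Lemma \ref{ex-78} for the necessity and explicit triangulation constructions for the sufficiency.

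For the necessity direction, if \emph{Max1$'$} and \emph{Max2$'$} both hold then $t_1 = t_0 = t_4 = t_5 = t_6 = 0$, so by Proposition \ref{calculate-exQ} we have $\tilde{t} = 2n + c$. If $p = 0$, the surface is an unpunctured polygon, and every triangulation of a convex polygon has at least two ear triangles whose ear-tip vertex has degree $2$, contradicting \emph{Max1$'$}. Hence $p \geq 1$. Combined with $c \geq 1$ and the exclusion of the once-punctured monogon, the only remaining cases with $p + c \leq 3$ are the once-punctured digon $(p,c) = (1,2)$ and the twice-punctured monogon $(p,c) = (2,1)$. In these cases, \emph{Max1$'$} and \emph{Max2$'$} would force $\tilde{t} = 6$ and $\tilde{t} = 9$ respectively, contradicting $\tilde{t}_{\max} = 4$ and $\tilde{t}_{\max} = 8$ from Lemma \ref{ex-78}.

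For the sufficiency direction, assume $p \geq 1$ and $p + c \geq 4$; I would construct a valid triangulation satisfying both conditions, split by subcases of $c$. When $c \geq 3$, start from the fan triangulation at a distinguished puncture $p^*$ (arcs from $p^*$ to each of the $c$ boundary points), under which every boundary vertex has degree $3$ and $p^*$ has degree $c \geq 3$; then insert each remaining puncture into a triangle, connecting it to the three corners by three new arcs, which gives the inserted puncture degree $3$ and raises each corner's degree by $1$. When $c = 2$ (forcing $p \geq 2$), start from the twice-punctured-digon triangulation with arcs $b_1 p_1, p_1 b_2, b_2 p_2, p_2 b_1, p_1 p_2$, which has degrees $(4,4,3,3)$, and insert extra punctures as above. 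When $c = 1$ (forcing $p \geq 3$), start from a triangulation of the thrice-punctured monogon with the seven arcs $e_1, e_1', e_2, e_3, \alpha_{12}, \alpha_{13}, \alpha_{23}$, in which $e_1, e_1'$ are two non-isotopic arcs from $v$ to $p_1$ whose bigon encloses the boundary loop (so they form two sides of the unique boundary triangle together with the boundary loop); the degrees are $(6,4,3,3)$, and extra punctures are inserted as above.

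The main obstacle is geometric realizability of the base triangulation in the monogon subcase $c = 1$: I must verify that $e_1$ and $e_1'$ can be chosen non-isotopic and compatible with the other arcs, and that no subset of arcs bounds a forbidden monogon or digon. I would address this by specifying the cyclic orders of arcs at each marked point (at $v$ the interior order is $e_1, e_2, e_3, e_1'$ with $e_1, e_1'$ adjacent to the two boundary ends; at each puncture $p_i$ the order is forced by the adjacent triangles), which determines the planar embedding up to isotopy; non-isotopy of $e_1, e_1'$ is then automatic because the bigon they bound encloses the boundary loop, and one checks that the five combinatorial triangles (one boundary triangle, three $v$-corner triangles, and the inner triangle $p_1 p_2 p_3$) fit consistently.
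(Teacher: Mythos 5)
Your overall structure matches the paper's: necessity by ruling out $p=0$ and the small cases $p+c\leq 3$, sufficiency by an explicit construction. One step in your necessity argument is flawed as written: you invoke Proposition \ref{calculate-exQ} to get $\tilde{t}=2n+c$ for the once-punctured digon and the twice-punctured monogon, but both of these surfaces are explicitly excluded from the hypotheses of that proposition, so you cannot cite it there. The repair is easy and you should make it explicit: either verify directly that in every triangulation of these two surfaces some puncture has degree $2$ (there are only a handful of triangulations up to isotopy, cf.\ Figure \ref{2p-mon} and Lemma \ref{ex-78}), or use a degree count --- e.g.\ for the once-punctured digon the three marked points would need total degree at least $9$ while only $2$ arcs and $2$ boundary segments are available. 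The paper itself simply asserts "it is easy to see" for $p+c\leq 3$, so your instinct to supply an actual argument is good; it just needs to rest on something applicable. The $p=0$ ear-triangle observation and the reduction to the two exceptional pairs $(p,c)\in\{(1,2),(2,1)\}$ are correct.

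For sufficiency your construction differs in detail from the paper's but is equally valid, and arguably cleaner. The paper first joins the punctures into a $p$-gon (or a segment when $p=2$) and then links boundary points to punctures, splitting into the cases $p=1$, $p=2$, $p\geq 3$; you instead split on $c$, start from a small base triangulation (a fan from a puncture when $c\geq 3$, a hand-built twice-punctured digon when $c=2$, a thrice-punctured monogon when $c=1$), and then insert each remaining puncture into a triangle joined to its three corners. The insertion step is a nice uniform device: it preserves the property that all degrees are at least $3$ and adds exactly three arcs per puncture, so the arc count $n=3p+c-3$ comes out right automatically. Your $c=1$ base triangulation (with the two non-isotopic arcs $e_1,e_1'$ from $v$ to $p_1$ cutting off the boundary triangle) is combinatorially consistent --- the five triangles you list cover each arc exactly twice and the boundary segment once --- so the realizability concern you raise is handled by the check you describe.
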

\begin{proof}
  ($\Rightarrow$) If there is no puncture, there will be at least two boundary points of degree 2 in the triangulation of a surface where $c\geq4$ (recall that we do not allow unpunctured monogon , digon or triangle), which is a contradiction to \emph{Max1$'$}. If $p+c\leq3$, it is easy to see that \emph{Max1$'$} and \emph{Max2$'$} cannot hold simultaneously, which is a contradiction.

  ($\Leftarrow$) For punctures in the un-triangulated surface, link them together into a $p-$polygon when $p\geq3$ or a line when $p=2$, then the degree of any puncture is 0 if $p=1$, 1 if $p=2$, or 2 if $p\geq3$.

  (i)If $p=1$, then $c\geq3$, link each boundary point to the puncture, then the degree of any boundary point is 3 and the degree of the puncture is $c$.

  (ii)If $p=2$, then $c\geq2$, link two boundary points to each puncture, then the degree of any punctures is at least 3; link boundary points of degree 2 to suitable punctures, then the degree of any boundary point is at least 3.

  (iii)If $p\geq3$, then $c\geq1$, link a boundary point to punctures on the $p-$polygon, then the degree of any punctures is at least 3; link boundary points of degree 2 to suitable punctures, then the degree of any boundary point is at least 3.

  Thus \emph{Max1$'$} and \emph{Max2$'$} hold simultaneously.
\end{proof}

From the calculation formula for the number of arrows of $\tilde{Q}_T$, we can see that the optimal condition of $\tilde{t}_{min}$ is that $t_1=\frac{c-m}{2}$ and $t_0=p$, where $m$ is the number of boundary components with odd boundary points.
For a surface with boundaries, $t_1=\frac{c-m}{2}$ if and only if there are $\frac{c-m}{2}$ boundary points of degree 2.
Then a sufficient condition of $t_1=\frac{c-m}{2}$ is that the degree of each boundary point labeled odd is 2 in the labeled surface, denoted by \emph{Min1$'$.}

The equivalent condition of that there is a triangulation where $t_0=p$ is that there are two boundary points labeled the same on the different borderline or labeled different, denoted by \emph{Min2$'$}.

Thus the optimal situation for the triangulation equipped with $\tilde{t}_{min}$ arrows is that \emph{Min1$'$} and \emph{Min2$'$} hold simultaneously. It is easy to see that a sufficient condition of that \emph{Min1$'$} and \emph{Min2$'$} hold simultaneously is that after proceeding labeling procedure, there are two points labeled by even number which are labeled the same on different borderlines, or labeled different.

\begin{Proposition}
  The maximum number $\tilde{t}_{max}$ and minimum number $\tilde{t}_{min}$ of the distribution set $\tilde{W}_\mathbf{S}$ for a surface $\mathbf{S}$ are given respectively in the following table, where $m$ is the number of boundary components with odd boundary points, $c$ is the number of boundary points, $p$ is the number of punctures and $n$ is the number of arcs.
\begin{table}[H]
\centering
\includegraphics[width=16cm]{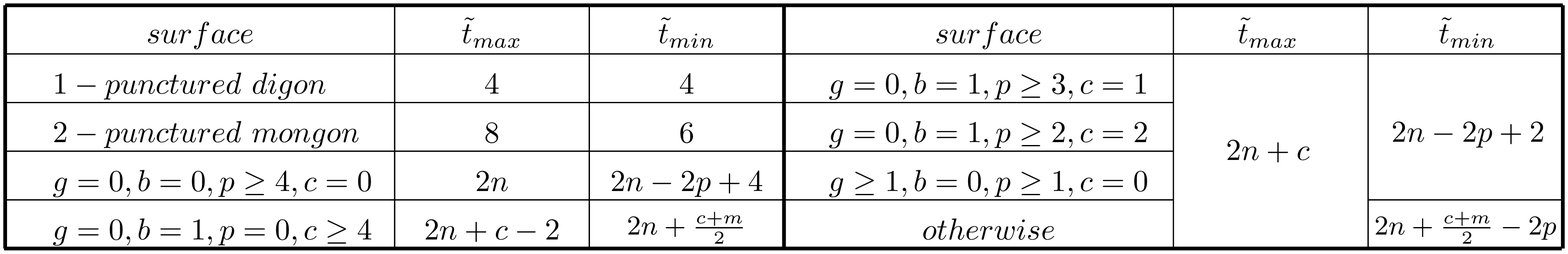}
\caption{The $\tilde{t}_{max}$ and $\tilde{t}_{min}$ of $\tilde{W}_\mathbf{S}$}
\label{type-t}
\end{table}
\label{num-check-t}
\end{Proposition}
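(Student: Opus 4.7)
The strategy is to exploit the formula $\tilde{t} = 2n + c - t_1 - t_4 - t_5 - t_6 - 2t_0$ from Proposition \ref{calculate-exQ} and, case by case, construct triangulations attaining the proposed extrema while simultaneously showing no triangulation can do better. The exceptional surfaces $\mathbf{S_{2p-mon}}$, $\mathbf{S_{1p-dig}}$, $\mathbf{S_{1p-tri}}$, and $\mathbf{S_{4p-sphere}}$ are already disposed of by Lemmas \ref{ex-78} and \ref{ex-bulianxu}, so the work lies in the remaining families, which I would organise just as in the proof of Lemma \ref{max+min}, roughly by the values of $(g,b,p,c)$.

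For $\tilde{t}_{max}$, the formula is maximised when $t_0 = t_1 = t_4 = t_5 = t_6 = 0$, which is precisely conditions \emph{Max1$'$} and \emph{Max2$'$} holding together. By Lemma \ref{Max}, these hold for disks ($g=0,b=1$) exactly when $p \neq 0$ and $p+c \geq 4$; for $g \geq 1$ or $b \geq 2$, an explicit construction analogous to the labeling/linking procedure in Lemma \ref{max+min} yields such a triangulation. In each situation where both conditions hold we read off $\tilde{t}_{max} = 2n+c$. When one of them must fail (closed surfaces with few punctures, or very small disks), I would identify the minimum forced value of $t_1 + t_4 + t_5 + t_6 + 2t_0$ from topological constraints (for instance, a monogon with $p$ punctures must contain at least one piece $\triangle_4$) and subtract to obtain the corresponding entries in Table \ref{type-t}.

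For $\tilde{t}_{min}$ we maximise the subtracted quantity. The obvious bounds $t_1 \leq (c-m)/2$ and $t_0 + t_4 + t_5 + 2t_6 \leq p$, together with the observation that $t_0$ carries the largest coefficient $-2$, suggest the optimum $t_1 = (c-m)/2$ and $t_0 = p$, yielding $\tilde{t}_{min} = 2n + (c+m)/2 - 2p$. Sufficiency, namely that \emph{Min1$'$} and \emph{Min2$'$} can be realised simultaneously in the generic cases, is established by mimicking Minimum Constructions I--IV of Lemma \ref{max+min}: one labels so that odd-labeled boundary points acquire degree $2$ and replaces an arc between two compatible even-labeled points by a family of pieces $\triangle_0$ absorbing all punctures. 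Then the extended-exchange count $2n+c-t_1-2t_0$ gives the claimed value directly.

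The main obstacle will be the case analysis around small surfaces where \emph{Max1$'$}/\emph{Max2$'$} or \emph{Min1$'$}/\emph{Min2$'$} collapse, typically because $p$ or $c$ is too small to both absorb punctures into pieces $\triangle_0$ and send degree-$2$ boundary points into pieces $\triangle_1$. These edge cases (for example $g=0$, $b=1$ with $p \in \{0,1\}$ and small $c$, or a monogon with few punctures) force a single unavoidable $\triangle_4$ or $\triangle_1$, giving the sporadic adjustments of the form $\max\{0,2-p\}$ that appear in the formula. Once these local perturbations are isolated and the triangulations verified to satisfy the expected piece counts, everything else reduces to substitution into Proposition \ref{calculate-exQ}.
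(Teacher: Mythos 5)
Your proposal follows essentially the same route as the paper: the same calculation formula from Proposition \ref{calculate-exQ}, the same optimality conditions \emph{Max1$'$}/\emph{Max2$'$} and \emph{Min1$'$}/\emph{Min2$'$}, Lemma \ref{Max} plus the labeling/linking constructions to realise them, a reduction of the closed case to the exchange-quiver table, and a case-by-case treatment of the small disks where the conditions collapse. The only quibble is that the $\max\{0,2-p\}$ correction you cite belongs to the exchange table of Lemma \ref{max+min}; the sporadic adjustments in the extended table are instead $\tilde{t}_{max}=2n+c-2$ for the unpunctured disk and $\tilde{t}_{min}=2n-2p+2$ for the cases $c=1$ and $c=2$, but these are exactly the edge cases you flag for separate analysis.
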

\begin{proof}
For the surface $\mathbf{S}$ where $b=0$, we have $Mut[\tilde{Q}_\mathbf{S}]=Mut[Q_\mathbf{S}]$, so that $\tilde{W}_{\mathbf{S}}=W_{\mathbf{S}}$, that is, $\tilde{t}_{max}=t_{max}$ and $\tilde{t}_{min}=t_{min}$, where $t_{max}$ and $t_{min}$ are the maximum and minimum numbers in $W_{\mathbf{S}}$.

For the surface $\mathbf{S}$ where $b\neq0$, we discuss on several cases.
\vspace{1 ex}

\textbf{Case1:} $\mathbf{S_1}$ where $g=0,b=1,p=0,c\geq4$.

For $p=0$, then $\tilde{t}=2n+c-t_1$, and it's easy to see that \emph{Max2$'$}, \emph{Min1$'$} and \emph{Min2$'$} hold, but \emph{Max1$'$} does not. Obviously, the minimum $t_1$ is 2 and the maximum $t_1$ is $\frac{c-m}{2}$.

Then by proceeding labeling procedure and linking procedure, we can obtain a triangulation of $\mathbf{S_1}$ equipped with $\tilde{t}_{max}$ arrows which is $2n+c-2$.
And by Minimum Construction $\uppercase\expandafter{\romannumeral1}'$, we can obtain a triangulation of $\mathbf{S_1}$ equipped with $\tilde{t}_{min}$ arrows which is $2n+\frac{c+m}{2}$.

\emph{Minimum Construction $\uppercase\expandafter{\romannumeral1}'$}
\par $\bullet$ \emph{Proceeding labeling procedure.}
\par $\bullet$ \emph{On a boundary $j$ with $c_j$ marked points, if $c_j$ is odd, link $i-2$ with $i$ for $i=2,4,\cdots,c_j-1$; otherwise, link $i-2$ with $i$ for $i=2,4,\cdots,c_j$ ($c_j$ is the point labeled by 0). Make sure that after linking, there is a piece $\triangle_1$ whose endpoints are $i-2,\ i-1$, and $i$.}
\par $\bullet$ \emph{delete one side of any contractible digon that could appear after the above steps.}
\vspace{1 ex}

\textbf{Case2:} $\mathbf{S_2}$ where $g=0,b=1,p\geq3,c=1$.

By Lemma \ref{Max}, \emph{Max1$'$} and \emph{Max2$'$} hold simultaneously.
Note that \emph{Max1$'$} and \emph{Max2$'$} also hold simultaneously for a triangulation $T_{max}$ equipped with $t_{max}$ arrows.
Thus the triangulation of $\mathbf{S_2}$ equipped with $\tilde{t}_{max}$ arrows is exactly $T_{max}$, and $\tilde{t}_{max}=2n+c$.

Obviously, \emph{Min1$'$} holds but \emph{Min2$'$} does not. Then the maximum pieces $\triangle_0$ is $p-1$. Note that for a triangulation $T_{min}$ equipped with $t_{min}$ arrows, \emph{Min1$'$} holds and $(t_0)_{max}=p-1$.
Thus the triangulation of $\mathbf{S_2}$ equipped with $\tilde{t}_{min}$ arrows is exactly $T_{min}$, and $\tilde{t}_{min}=2n-2p+2$.
\vspace{1 ex}

\textbf{Case3:} $\mathbf{S_3}$ where $g=0,b=1,p\geq2,c=2$.

By Lemma \ref{Max}, \emph{Max1$'$} and \emph{Max2$'$} hold simultaneously. Then by Maximum Construction $\uppercase\expandafter{\romannumeral3}'$, we can obtain a triangulation of $\mathbf{S_3}$ equipped with $\tilde{t}_{max}$ arrows which is $2n+c$.

\emph{Maximum Construction $\uppercase\expandafter{\romannumeral3}'$}
\par $\bullet$ \emph{Proceed the labeling procedure.}
\par $\bullet$ \emph{Link all punctures into a line.}
\par $\bullet$ \emph{Link two labeled points respectively with all punctures on the line.}

For $\mathbf{S_3}$, $(t_1)_{max}=1$ and $(t_0)_{max}=p$. Because $t_0=p$ and $c=2$, the triangulation of $\mathbf{S_3}$ with $t_1=1,t_0=p$ can only consist of a piece $\triangle_1$ and a family of pieces $\triangle_0$.
While there is a contractible loop in the graph obtained by gluing a piece $\triangle_1$ and a family of pieces $\triangle_0$ together, which is a contradiction. Thus \emph{Min1$'$} and \emph{Min2$'$} do not hold simultaneously.
The triangulation with $t_1=1,t_4=1,t_0=p-1$ and the one with $t_1=0,t_4=0,t_0=p$ are equipped with the same number of arrows, so we may create the latter one for minimum case.
Thus by Minimum Construction $\uppercase\expandafter{\romannumeral3}'$, we can obtain a triangulation of $\mathbf{S_3}$ equipped with $\tilde{t}_{min}$ arrows which is $2n-2p+2$.

\emph{Minimum Construction $\uppercase\expandafter{\romannumeral3}'$}
\par $\bullet$ \emph{Construct a family of pieces $\triangle_0$ with $p$ punctures}
\par $\bullet$ \emph{Replace a side on the boundary with a family of pieces $\triangle_0$.}
\vspace{1 ex}

\textbf{Case4:} $\mathbf{S_4}$ where $g=0,b=1,p\geq1,c\geq3$.

By Lemma \ref{Max}, \emph{Max1$'$} and \emph{Max2$'$} hold simultaneously. Then by Maximum Construction $\uppercase\expandafter{\romannumeral4}'$, we can obtain a triangulation of $\mathbf{S_4}$ equipped with $\tilde{t}_{max}$ arrows which is $2n+c$.

\emph{Maximum Construction $\uppercase\expandafter{\romannumeral4}'$}
\par $\bullet$ \emph{Proceed Maximum Construction $\uppercase\expandafter{\romannumeral3}'$.}
\par $\bullet$ \emph{Link the labeled points of degree 2 with suitable punctures of degree 3 on the line.}

Obviously, after proceeding labeling procedure, $\mathbf{S_4}$ satisfies the sufficient condition of that \emph{Min1$'$} and \emph{Min2$'$} hold simultaneously. Thus by Minimum Construction $\uppercase\expandafter{\romannumeral4}'$, we can obtain a triangulation of $\mathbf{S_4}$ equipped with $\tilde{t}_{min}$ arrows which is $2n+\frac{c+m}{2}-2p$.

\emph{Minimum Construction $\uppercase\expandafter{\romannumeral4}'$}
\par $\bullet$ \emph{Proceed Minimum Construction $\uppercase\expandafter{\romannumeral1}'$}
\par $\bullet$ \emph{Construct a family of pieces $\triangle_0$ by unconnected punctures}
\par $\bullet$ \emph{Replace an edge whose endpoints are labeled by even numbers by a family of pieces $\triangle_0$.}
\vspace{1 ex}

\textbf{Case5:} $\mathbf{S_5}$ where $g=0,b\geq2,p\geq0,c\geq2$ or $g\geq1,b\geq1,p\geq0,c\geq1$.

\emph{Max1$'$} and \emph{Max2$'$} hold simultaneously for $\mathbf{S_5}$.
Because there are at least 2 borderlines in the snipped surface $\mathbf{S'_5}$, so that there is no piece $\triangle_1$ after proceeding linking procedure, that is, \emph{Max1$'$} holds. Besides, if $p=0$, \emph{Max2$'$} holds. Otherwise, a triangulation of $\mathbf{S_5}$ can be obtained from a triangulation satisfying \emph{Max1$'$} of $\mathbf{S'_5}$ by replacing a piece $\triangle_2$ or $\triangle_3$ with a $p$th-punctured triangle.

For a triangle with $p$ punctures, \emph{Max1$'$} and \emph{Max2$'$} hold simultaneously, so does $\mathbf{S_5}$.
Then by Maximum Construction $\uppercase\expandafter{\romannumeral5}'$, we can obtain a triangulation of $\mathbf{S_5}$ equipped with $\tilde{t}_{max}$ arrows which is $2n+c$.

\emph{Maximum Construction $\uppercase\expandafter{\romannumeral5}'$}
\par $\bullet$ \emph{If $g\geq1$, snip the surface from a boundary point into a plane graph whose underlying graph is a 4g-sided polygon with $b$ holes.}
\par $\bullet$ \emph{Proceed labeling procedure and linking procedure}
\par $\bullet$ \emph{Proceed Maximum Construction $\uppercase\expandafter{\romannumeral3}'$.}
\par $\bullet$ \emph{Supplement arcs to get a complete triangulation.}

Obviously, after proceeding labeling procedure, $\mathbf{S_5}$ satisfies the sufficient condition of that \emph{Min1$'$} and \emph{Min2$'$} hold simultaneously. Thus by Minimum Construction $\uppercase\expandafter{\romannumeral5}'$, we can obtain a triangulation of $\mathbf{S_5}$ equipped with $\tilde{t}_{min}$ arrows which is $2n+\frac{c+m}{2}-2p$.

\emph{Minimum Construction $\uppercase\expandafter{\romannumeral5}'$}
\par $\bullet$ \emph{If $g\geq1$, snip the surface from a boundary point into a plane graph whose underlying graph is a 4g-sided polygon with $b$ holes.}
\par $\bullet$ \emph{Proceed Minimum Construction $\uppercase\expandafter{\romannumeral4}'$}
\par $\bullet$ \emph{Supplement arcs to get a complete triangulation.}
\end{proof}

\subsection{The existence of an extended complete walk}
The proof of Theorem \ref{main} with respect to extended exchange cluster quivers is based on the following Lemma. When considering the arrows between frozen vertices and mutable vertices, two types of flips $f_B$ and $f_E$ in Table \ref{walk-f} are changed to $f_{\tilde{B}}$ and $f_{\tilde{E}}$ shown in below, while the other types of flips keep unchanged.
\begin{figure}[H]
\centering
\includegraphics[width=16cm]{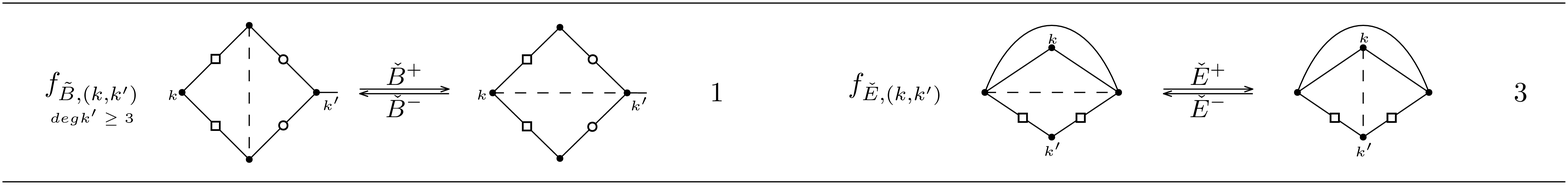}
\end{figure}
\begin{Lemma}
There is an extended complete walk in graph $\mathbf{E^\circ(S,M)}$ of a surface which is not an once-punctured triangle or a forth-punctured sphere.
\label{perfect2}
\end{Lemma}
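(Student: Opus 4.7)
The plan is to adapt the case-by-case construction used in the proof of Lemma \ref{perfect1} to the extended setting, where the calculation formula $\tilde{t}=2n+c-t_1-t_4-t_5-t_6-2t_0$ from Proposition \ref{calculate-exQ} replaces the formula used in Corollary \ref{calculation}, and where the flips of types $f_B$ and $f_E$ incident to the boundary are refined into the flips $f_{\tilde B}$ and $f_{\tilde E}$ indicated just above the statement. The twice-punctured monogon and once-punctured digon are already handled by Lemma \ref{ex-78}, so the real work is to build, in each remaining topological case, a composite walk $l_k\circ\cdots\circ l_1(T_1)$ whose $\Phi_1\cup\cdots\cup\Phi_k$ coincides with the full interval $[\tilde{t}_{min},\tilde{t}_{max}]$ given in Proposition \ref{num-check-t}.

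First I would split according to the same coarse cases as in Lemma \ref{perfect1}: (1) $g=0,b=0,p\geq 5$; (2) $g\geq 1,b=0,p\geq 1$; (3) $g=0,b=1$, further subdivided by $(p,c)$; (4) $g=0,b\geq 2$ or $g\geq 1,b\geq 1$. In the closed cases (1) and (2), no frozen vertices appear, so $Mut[\tilde Q]=Mut[Q]$ and the complete walks constructed in cases 1 and 4 of the proof of Lemma \ref{perfect1} are already extended complete walks. In the boundary cases I would start from a triangulation $T_1$ realizing $\tilde{t}_{min}$ (built via the Minimum Constructions $\mathrm{I}'$--$\mathrm{V}'$ from Proposition \ref{num-check-t}) and then reuse the same skeleton of flip sequences as in the exchange proof. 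The adjustments are: every $f_{B^\pm}$ incident to a frozen side becomes $f_{\tilde B^\pm}$, and every $f_{E^\pm}$ creating or destroying a triangle touching the boundary becomes $f_{\tilde E^\pm}$; since these refined flips change $\tilde t$ by exactly one rather than $t$ by zero, the quantities $|\Phi_i|$ shift and I must correspondingly shorten or lengthen the walks until the union covers $[\tilde{t}_{min},\tilde{t}_{max}]$.

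The subcase $g=0,b=1,p=0,c\geq 4$, absent from Lemma \ref{perfect1} (there $\tilde{W}_{\mathbf S}=W_{\mathbf S}=\{0\}$ degenerately), now becomes substantive. Here $\tilde t=2n+c-t_1$ with $t_1\in[2,\tfrac{c-m}{2}]$, so I would start from the labeling/linking triangulation $T_{\min}$ (maximizing $t_1$) and apply a sequence of $f_{\tilde B^-}$ flips, each of which removes one piece $\triangle_1$ at the cost of increasing $\tilde t$ by $1$; the composite $\sum f_{\tilde B^-}$ covers $[2n+\tfrac{c+m}{2},2n+c-2]$ in a single walk. For the positive-puncture subcases of $g=0,b=1$, I would combine such a boundary sweep with interior flips $f_{C^\pm}$ and $f_{D^+}$ that manipulate a family of pieces $\triangle_0$ concentrating the punctures, exactly as in Case 3 of Lemma \ref{perfect1} but keeping track of the extra contribution $+c$ in the formula. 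For Case (4) I would combine the two ingredients: first a boundary sweep via $f_{\tilde B^\pm}$ producing the range governed by $t_1$, then a puncture sweep via $f_{C^\pm}$, $f_H$ and $f_{A^\pm}$ inside a family of pieces $\triangle_0$ producing the range governed by $t_0$; the two ranges overlap at $\tilde t=2n+c-\tfrac{c-m}{2}-2p$, which is crucial for the union being an interval.

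The main obstacle is verifying two things simultaneously in each subcase: (a) that the composite walk is acyclic, i.e.\ no two consecutive triangulations on it are equal and that triangulations carrying the same value of $\tilde t$ are genuinely non-isotopic, and (b) that the boundary refinements $f_{\tilde B}$, $f_{\tilde E}$ do not destroy the intermediate conditions (like \emph{Max1$'$}, \emph{Min1$'$}) that are assumed in the middle of a walk; unlike the exchange case, here the number of arrows incident to a frozen vertex changes under every such flip, so one must be careful that the relevant local pictures around the flipped arc really match the ones in Table \ref{walk-f} as updated to the extended setting. I would verify acyclicity by the same argument as in Lemma \ref{perfect1}: on each elementary walk, $\tilde t$ is strictly monotone except at isolated pairs that are clearly non-isotopic (typically mirror images across a frozen side, hence distinct vertices of $\mathbf{E^\circ(S,M)}$), and the exceptions one encounters, namely the once-punctured triangle and the four-punctured sphere, are precisely where no such monotone sweep exists since their distribution sets $\{6,7,9\}$ and $\{8,9,10,12\}$ (Lemma \ref{ex-bulianxu} and the argument that $\tilde W=W$ for the closed sphere) are not intervals.
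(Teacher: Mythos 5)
Your plan follows essentially the same route as the paper's proof: reduce the $b=0$ cases to Lemma \ref{perfect1} via $Mut[\tilde Q]=Mut[Q]$, and in each boundary case build a composite walk from the flips of Table \ref{walk-f} (with $f_{\tilde B}$, $f_{\tilde E}$ replacing $f_B$, $f_E$) whose sets $\Phi_i$ overlap to cover $[\tilde t_{min},\tilde t_{max}]$, checking acyclicity by monotonicity of $\tilde t$ together with non-isotopy of the few repeated values. The only caveats are minor factual slips that do not affect the strategy (the unpunctured polygon is in fact treated in Case 3.3 of Lemma \ref{perfect1} with a nontrivial $W_{\mathbf S}$, the linking triangulation minimizes rather than maximizes $t_1$ so the $f_{\tilde B^-}$ sweep runs from $\tilde t_{max}$ down to $\tilde t_{min}$, and $f_{B^\pm}$ already changes $t$ by one in the exchange setting); the substantive content of the paper's proof --- explicitly exhibiting the seven boundary-case walks with their starting triangulations --- is exactly what your outline defers.
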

\begin{proof}
For a twice-punctured monogon, an extended complete walk is $T_1-T_2-T_3$, where $T_i$ is the triangulation shown in \ref{2p-mon}. For a once-punctured digon, an extended complete walk is a vertex in $\mathbf{E^\circ(S,M)}$.

For the surface $\mathbf{S}$ where $b=0$, we have $Mut[\tilde{Q}_\mathbf{S}]=Mut[Q_\mathbf{S}]$, so that an extended complete walk is the same as complete walk.

For the surface $\mathbf{S}$ where $b\neq0$, we discuss on several cases.
\vspace{1 ex}

\textbf{Case1:} For $\mathbf{S_1}$ where $g=0,b=1,p=0,c\geq4$, let $T$ be the triangulation obtained by processing labeling procedure and linking procedure for $\mathbf{S_1}$, it can translate into the triangulation equipped with $\tilde{t}_{min}$ arrows by $\frac{c-m}{2}-2$ times flips of type $\tilde{B}^-$. Thus $l(T_1)=(\sum\nolimits_{i=1}^{i={\frac{c-m}{2}}-2}f_{\tilde{B}^-,((2i+1)_c,0_c)})|_{c\geq6}(T)$ is an extended complete walk for $\mathbf{S_1}$.
\vspace{1 ex}

\textbf{Case2:} For $\mathbf{S_2}$ where $g=0,b=1,p\geq3,c=1$, note that triangulations of $\mathbf{S_2}$ equipped with $\tilde{t}_{max}$ arrows and $\tilde{t}_{min}$ arrows respectively are the same as that with $t_{max}$ arrows and $t_{min}$ arrows. And on a complete walk given in Case2 in the proof of Lemma \ref{perfect1}, the set of the numbers of arrows for $\tilde{Q}_T$ associated with triangulations on that walk is exactly $[4p-2,6p-3]\cap\mathbb{N}$, so that an extended complete walk for $\mathbf{S_2}$ is the same as the complete walk.
\vspace{1 ex}

\textbf{Case3:} For $\mathbf{S_3}$ where $g=0,b=1,p\geq2,c=2$, let $T_1$ be a triangulation shown below. Then two triangulations $T_2=l_1(T_1)$ and $T_3=l_2(T_2)$ shown below will appear on the following walks in (\ref{3'1}), (\ref{3'2}), (\ref{3'3}).
\begin{figure}[H]
\centering
\includegraphics[width=10cm]{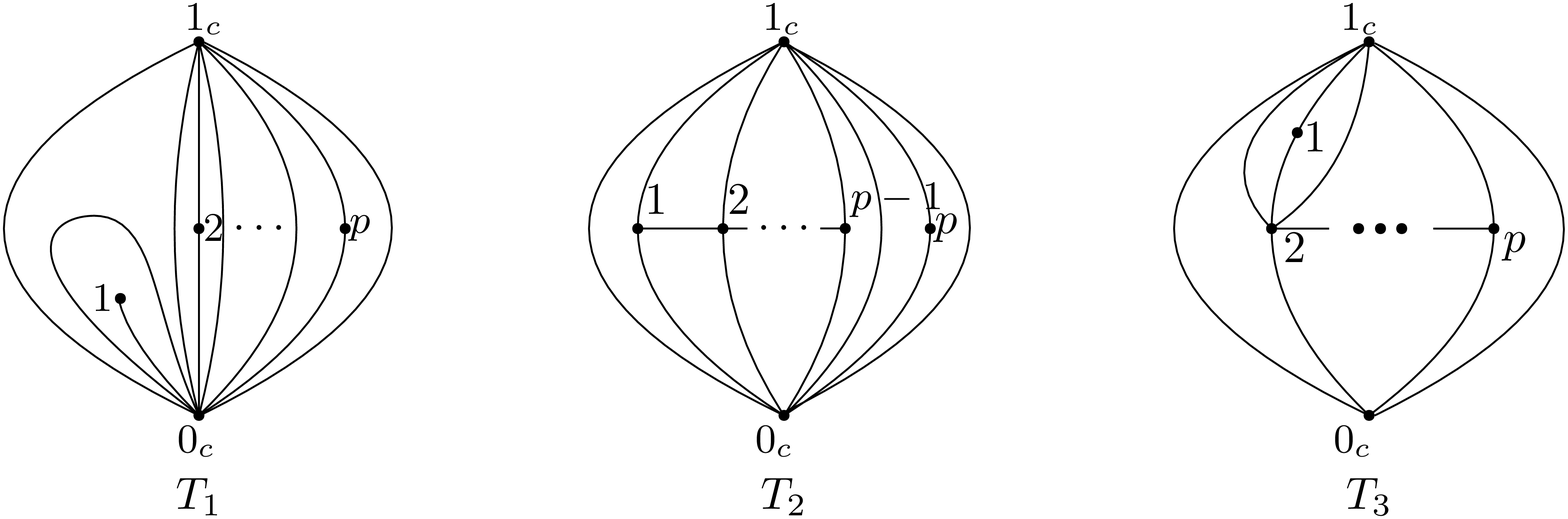}
\end{figure}
Firstly, we have
\begin{equation}\label{3'1}
l_1(T_1)=(\sum\nolimits_{i=3}^{i=p-1}f_{C^+,(i,i-1)})|_{p\geq4}\circ f_{D^+,(1,2)}\circ f_{A^-,(1,1_c)}(T_1),
\end{equation}
then, $\Phi_1=\{4p,4p+1,4p+4+2s|s=0,\cdots,p-3\}$;

Secondly, we have
\begin{equation}\label{3'2}
l_2(T_2)=f_{C^-,(1,0_c)}\circ (f_{C^+,(p,p-1)})|_{p\geq3}(T_2),
\end{equation}
then, $\Phi_2=\{6p-2,6p\}$;

Thirdly, we have
\begin{equation}\label{3'3}
l_3(T_3)=(f_{A^-,(1,1_c)})|_{p\geq3}\circ (\sum\nolimits_{i=p}^{i=3}f_{C^-,(i,i-1)})|_{p\geq3}\circ f_{A^+,(1,1_c)}(T_3),
\end{equation}
then, $\Phi_3=\{4p+2,4p+3+2s|s=0,\cdots,p-2\}$.

Thus we obtain $\Phi_1\cup\Phi_2\cup\Phi_3=[4p,6p]$.

It is easy to see that on this composite walk, there are only two triangulations $T_2$ and $T_3$ equipped with the same number of arrows which is $6p-2$. While they are obviously not isotopy.

Hence $l_3\circ l_2\circ l_1(T_1)$ is an extended complete walk for $\mathbf{S_3}$.
\vspace{1 ex}

\textbf{Case4:} For $\mathbf{S_4}$ where $g=0,b=1,p\geq2,c=3$, let $T_1$ be a triangulation shown below, then we have two walks in (\ref{4'1}) and (\ref{4'2}):
\begin{figure}[H]
\centering
\includegraphics[width=2.3cm]{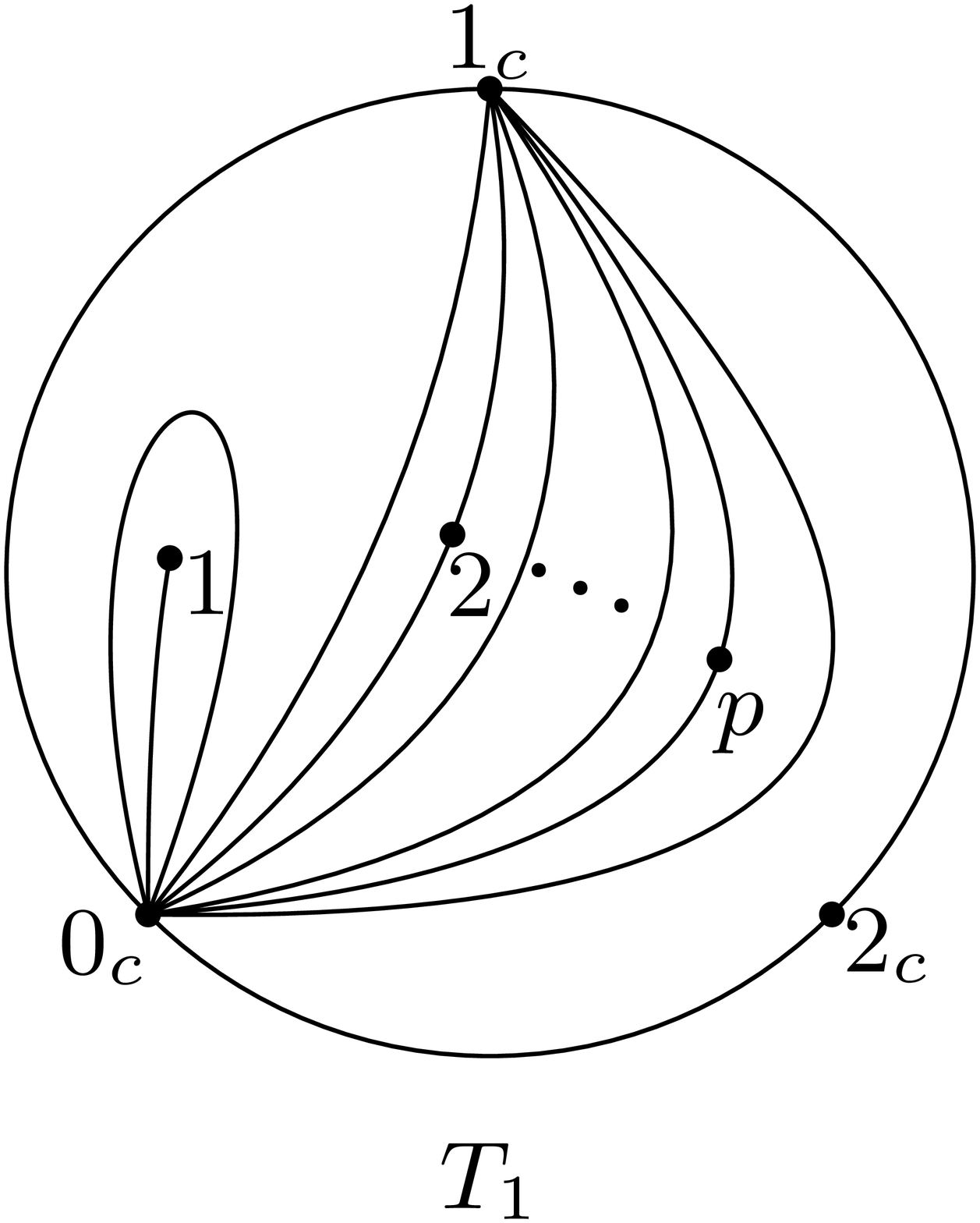}
\end{figure}
Firstly, we have
\begin{equation}\label{4'1}
l_1(T_1)=(\sum\nolimits_{i=p-1}^{i=1}f_{C^+,(i,i+1)})|_{p\geq4}\circ f_{\tilde{E}^+,(p,2_c)}\circ f_{A^-,(1,1_c)}(T_1),
\end{equation}
then $\Phi_1=\{4p+2,4p+3+2s|s=0,\cdots,p\}$;

Secondly, we have
\begin{equation}\label{4'2}
l_2(T_2)=f_{C^-,(1,0_c)}\circ (\sum\nolimits_{i=p}^{i=3}f_{C^-,(i,i-1)})|_{p\geq3}\circ f_{\tilde{B}^-,(2_c,p)}(T_2),
\end{equation}
where $T_2=l_1(T_1)$, then $\Phi_2=\{4p+4+2s|s=0,\cdots,p-1\}$.

Thus we obtain $\Phi_1\cup\Phi_2=[4p+2,6p+3]$. And on this composite walk, any triangulation is equipped with different number of arrows.

Hence $l_2\circ l_1(T_1)$ is an extended complete walk for $\mathbf{S_4}$.
\vspace{1 ex}

\textbf{Case5:} For $\mathbf{S_5}$ where $g=0,b=1,p\geq1,c=4$, let $T_1$ be a triangulation shown below. Then two triangulations $T_2=l_1(T_1)$ and $T_3=l_2(T_2)$ shown below will appear on the following walks in (\ref{5'1}), (\ref{5'2}), (\ref{5'3}):
\begin{figure}[H]
\centering
\includegraphics[width=10cm]{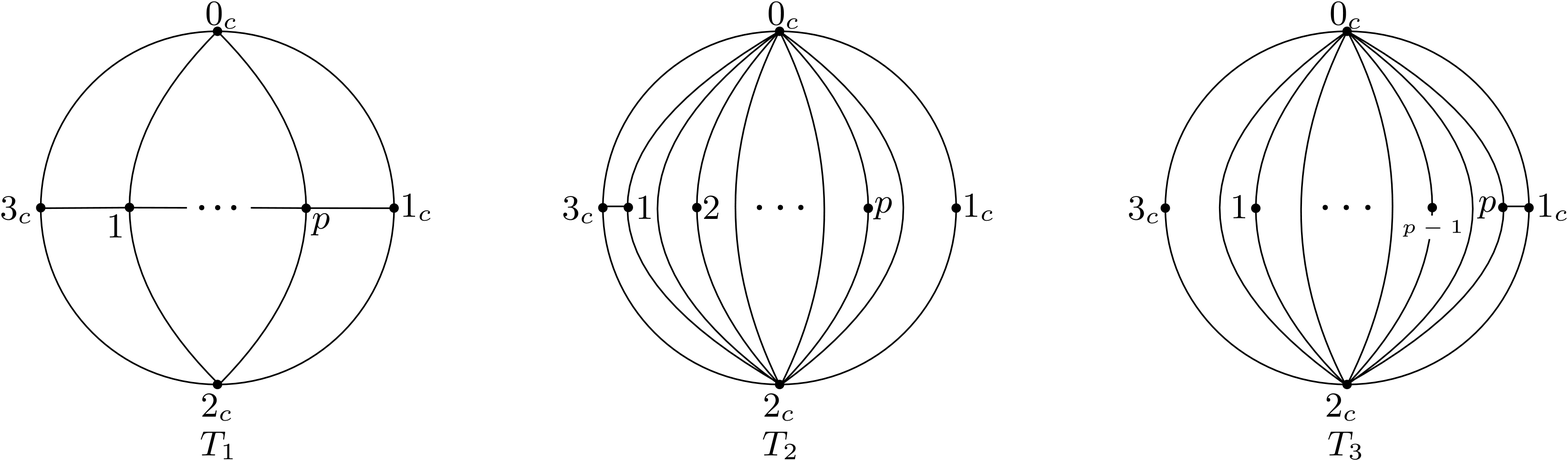}
\end{figure}
Firstly, we have
\begin{equation}\label{5'1}
l_1(T_1)=(\sum\nolimits_{i=p}^{i=2}f_{C^-,(i,i-1)})|_{p\geq2}\circ f_{\tilde{B}^-,(1_c,p)}(T_1),
\end{equation}
then $\Phi_1=\{6p+6,6p+5-2s|s=0,\cdots,p-1\}$;

Secondly, we have
\begin{equation}\label{5'2}
l_2(T_2)=f_{\tilde{E}^+,(p,1_c)}\circ f_{\tilde{E}^-,(1,3_c)}(T_2),
\end{equation}
then $\Phi_2=\{4p+4,4p+7\}$;

Thirdly, we have
\begin{equation}\label{5'3}
l_3(T_3)=(\sum\nolimits_{i=p-2}^{i=1}f_{C^+,(i,i+1)})|_{p\geq3}\circ (f_{C^+,(p-1,1_c)})|_{p\geq2}\circ f_{A^+,(p,1_c)}\circ f_{C^-,(p,0_c)}(T_3),
\end{equation}
then $\Phi_3=\{4p+5,4p+6+2s|s=0,\cdots,p-1\}$.

Thus we obtain $\Phi_1\cup\Phi_2\cup\Phi_3=[4p+4,6p+6]$.

It is easy to see that on this composite walk, there are only two triangulations $T_2$ and $T_3$ equipped with the same number of arrows which is $4p+7$. They are isotopy but are different vertices of $\mathbf{E^\circ(S,M)}$.

Hence $l_3\circ l_2\circ l_1(T_1)$ is an extended complete walk for $\mathbf{S_5}$.
\vspace{1 ex}

\textbf{Case6:} For $\mathbf{S_6}$ where $g=0,b=1,p\geq1,c\geq5$, let $T_1$ be a triangulation shown below. Then a walk staring from $T_1$ is $l_1(T_1)=(\sum\nolimits_{i=3}^{i=\frac{c+m}{2}-1}f_{H,((2i)_c,1)})|_{c\geq7}\circ \sum\nolimits_{i=1}^{i=\frac{c-m}{2}-1}f_{\tilde{B}^-,((2i+1)_c,1)}\circ f_{\tilde{B}^-,(1_c,p)}(T_1)$, so that $\Phi_1$ is $[6p+\frac{7}{2}c+\frac{m}{2}-6,6p+3c-6]\cap\mathbb{N}$.

We consider the part of the triangulation $T_2=l_1(T_1)$ shown below where punctures are located. Then the following walk $l_2(T_2)$ is to flip in this rectangle and keep the other part unchanged.
\begin{figure}[H]
\centering
\includegraphics[width=7cm]{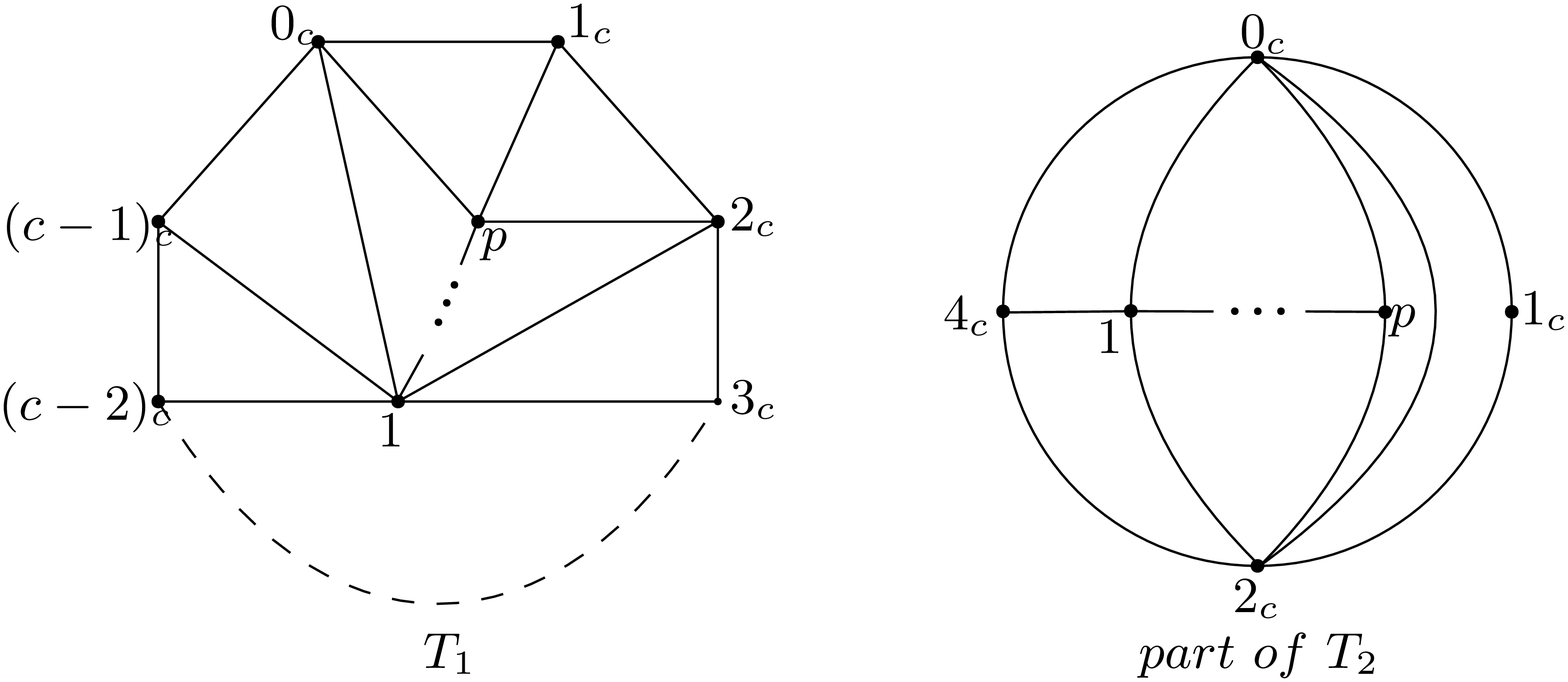}
\end{figure}
$$l_2(T_2)=(\sum\nolimits_{i=3}^{i=p}f_{C^+,(i,i-1)})|_{p\geq3}\circ (f_{C^+,(2,4_c)})|_{p\geq2}\circ f_{A^+,(1,4_c)}\circ f_{C^-,(1,2_c)}\circ(\sum\nolimits_{i=p}^{i=2}f_{C^-,(i,i-1)})|_{p\geq2}(T_2),$$
so that we have $\Phi_2=[4p+\frac{7}{2}c+\frac{m}{2}-6,6p+\frac{7}{2}c+\frac{m}{2}-6]\cap\mathbb{N}$.

Thus we obtain$\Phi_1\cup\Phi_2=[4p+\frac{7}{2}c+\frac{m}{2}-6,6p+3c-6]$. And on this composite walk, any triangulation is equipped with different number of arrows.

Hence $l_2\circ l_1(T_1)$ is an extended complete walk for $\mathbf{S_6}$.
\vspace{1 ex}

\textbf{Case7:} For $\mathbf{S_7}$ where $g=0,b\geq2,p\geq0,c\geq2$ or $g\geq1,b\geq1,p\geq0,c\geq1$, suppose $l_1'$ is the walk and $T_1'$ is the triangulation given in the Case5 in the proof of Lemma \ref{perfect1}. Then the end-point of $l_1'(T_1')$ is the triangulation equipped with $\tilde{t}_{max}$ arrows. Let $T_1=l_1'(T_1')$ and $T_2=T_1'$, then $T_2$ can be obtained from $T_1$ by $\frac{c-m}{2}$ flips of type $\tilde{B}^-$. Thus we get $l_1(T_1)$, so that $\Phi_1$ is $[2n+\frac{c+m}{2},2n+c]\cap\mathbb{N}$.

When $p=0$, an extended complete walk is $l_1(T_1)$.

When $p=1$, we consider the part of $T_2$ shown in below, that is, the part of $T_1'$, where the puncture is located. Then an extended complete walk is $f_{A^+,(p,v_1)}\circ f_{C^-,(p,v_2)}\circ l_1(T_1)$.
\begin{figure}[H]
\centering
\includegraphics[width=3cm]{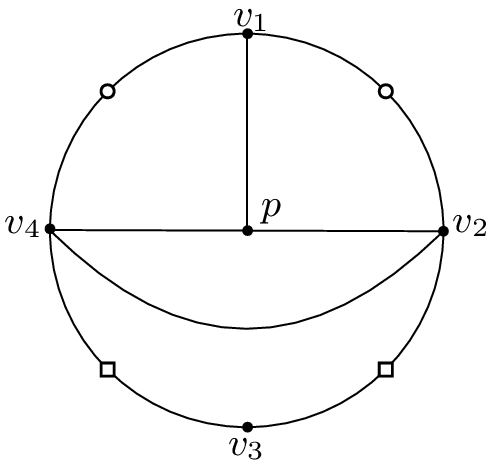}
\end{figure}

When $p\geq2$, we consider the part of the triangulation $T_2=l_1(T_1)$, where the block with $p$ punctures is located. Because the block replaces an arc which is not the common side of $\triangle_1$ and $\triangle_3$, the block lies in a rectangle of which at most one side is frozen. Thus this part of the triangulation $T_2$ is the same as the part shown in the third graph of Case5 in the proof of Lemma \ref{perfect1}. We can make the same flips in $T_2$. Let $l_2$ and $l_3$ be the walks given in the Case5 in the proof of Lemma \ref{perfect1}.

Thus an extended complete walk is $l_3\circ l_2\circ l_1(T_1)$.
\end{proof}

\begin{Theorem}
Let $\tilde{Q}$ be an extended exchange cluster quiver of finite mutation type, then the distribution set for $Mut[\tilde{Q}]$ is continuous unless $\tilde{Q}$ arises from the once-punctured triangle $\mathbf{S_{1p-tri}}$ or the forth-punctured sphere $\mathbf{S_{4p-sphere}}$.
More specifically, we have
$$\tilde{W}_{\mathbf{S_{1p-tri}}}=\{6,7,9\},\ \tilde{W}_{\mathbf{S_{4p-sphere}}}=\{8,9,10,12\}.$$
\label{2}
\end{Theorem}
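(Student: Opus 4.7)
The plan is to reduce this statement to a direct assembly of the lemmas and propositions already established, combined with the Felikson--Shapiro--Tumarkin classification. Since the introduction explicitly declares that extended exchange cluster quivers of exceptional type are not considered, the only cases I need to handle are (a) $n\leq 2$ and (b) $\tilde Q$ arising from a surface $\mathbf S$. Case (a) is immediate: with at most two vertices the only non-frozen interactions are between that small pair, and the distribution set is trivially an interval (in fact, a singleton or a short interval), so continuity holds by inspection.

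For case (b) I would first quarantine the small-surface exceptions. Lemma \ref{ex-78} handles the twice-punctured monogon and the once-punctured digon, giving $\tilde W_{\mathbf{S_{2p-mon}}}=\{6,7,8\}$ and $\tilde W_{\mathbf{S_{1p-dig}}}=\{4\}$, both continuous. Lemma \ref{ex-bulianxu} shows that $\tilde W_{\mathbf{S_{1p-tri}}}=\{6,7,9\}$ and $\tilde W_{\mathbf{S_{4p-sphere}}}=\{8,9,10,12\}$, which skip the integers $8$ and $11$ respectively. These are precisely the exceptional, non-continuous cases asserted in the theorem, so once they are set aside, it suffices to prove continuity for all remaining surfaces.

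For every such remaining surface $\mathbf S$, Lemma \ref{perfect2} provides an extended complete walk in $\mathbf{E^\circ(S,M)}$. By Definition \ref{perfect} the set of arrow counts of the associated extended exchange cluster quivers along this walk coincides with $\tilde W_{\mathbf S}$. Unpacking the construction in the proof of Lemma \ref{perfect2}, each segment $l_i$ of the walk contributes a set $\Phi_i$ of consecutive integers, and the union $\bigcup_i\Phi_i$ is shown case-by-case to equal the full interval $[\tilde t_{min},\tilde t_{max}]$ whose endpoints are those given in Proposition \ref{num-check-t}. Therefore $\tilde W_{\mathbf S}=[\tilde t_{min},\tilde t_{max}]\cap\mathbb N$ is a continuous distribution set.

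I expect essentially no new difficulty here: the heavy lifting is packaged inside Lemma \ref{perfect2} (existence of the extended complete walk whose arrow counts fill out the interval) and Proposition \ref{num-check-t} (identification of $\tilde t_{max}$ and $\tilde t_{min}$). The only thing that still needs to be checked carefully is that the union $\bigcup\Phi_i$ really exhausts $[\tilde t_{min},\tilde t_{max}]$ without leaving any gap, which amounts to matching the endpoints produced by the walk against the endpoint formulas in Table \ref{type-t}; this is a bookkeeping check, case by case on $(g,b,p,c)$, rather than a genuine obstacle. Once those match, the theorem follows by collecting the two exceptional surfaces on one side and everything else on the other.
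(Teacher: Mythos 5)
Your proposal is correct and follows essentially the same route as the paper: quarantine the once-punctured triangle and forth-punctured sphere via Lemma \ref{ex-bulianxu}, handle the other small surfaces via Lemma \ref{ex-78}, dispose of the two-vertex case directly, and obtain continuity for all remaining surfaces from the extended complete walk of Lemma \ref{perfect2} together with the endpoint formulas of Proposition \ref{num-check-t}. (Incidentally, the paper's own proof cites Lemma \ref{perfect1} where it clearly means Lemma \ref{perfect2}; your citation is the intended one.)
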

\begin{proof}
  Combining Lemma \ref{ex-78}, Lemma \ref{ex-bulianxu}, Lemma \ref{perfect1} and the fact that $Mut[\tilde{Q}]=Mut[Q]$ where $\tilde{Q}$ has $|\tilde{Q}_0|=2$ and $|\tilde{Q}_1|=n$, the conclusion is complete proved.
\end{proof}

\section{The number of arrows of cluster quivers of infinite mutation type}

Lastly, we discuss the same question for cluster quivers of infinite mutation type.

\subsection{Distribution of the numbers of arrows}

\begin{Fact}
Let $Q$ be a quiver of infinite mutation type, in general, then the distribution set for $Mut[Q]$ is not continuous.
\end{Fact}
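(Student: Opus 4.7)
The plan is to exhibit a concrete mutation-infinite cluster quiver whose distribution set manifestly fails to be continuous; since the statement is hedged by ``in general'', a single such example suffices.

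Take $Q$ to be the acyclic quiver on three vertices $\{1,2,3\}$ with two arrows $1\to 2$ and two arrows $2\to 3$. I would first verify that $Q$ is of infinite mutation type. Mutation of $Q$ at vertex $2$ produces a $3$-cycle with arrow multiplicities $(2,2,4)$ and total arrow count $8$; but every finite-mutation-type class of cluster quivers on three vertices (those arising from the hexagon, once-punctured triangle, twice-punctured monogon, three-marked annulus, or once-punctured torus, which exhaust the surfaces with three-arc triangulations) has maximum arrow count at most $6$, by Proposition \ref{num-t} and Lemma \ref{78}. Hence $Q$ cannot be of finite mutation type.

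The conceptual heart of the argument is a parity invariant: every $Q'\in Mut[Q]$ has all arrow multiplicities even, and in particular $|Q'_1|$ is even. I would prove this by induction on the number of mutations used to reach $Q'$ from $Q$. Under a mutation at a vertex $k$, arrows incident to $k$ are merely reversed and keep their multiplicities; for any pair $(i,j)$ of vertices other than $k$, the new multiplicity of $i\to j$ equals $mn-s$ (or $s-mn$ if negative), where $m$, $n$, $s$ are the multiplicities of $i\to k$, $k\to j$, and $j\to i$ (or $i\to j$) in $Q'$. When $m$, $n$, $s$ are all even, so is $mn-s$. Since $Q$ has only the even multiplicities $0$ and $2$, the invariant propagates to all of $Mut[Q]$.

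To conclude, I would note that the distribution set is unbounded: only finitely many isomorphism classes of quivers on three vertices have any fixed bound on their arrow count, so $Mut[Q]$ being infinite forces $|Q'_1|$ to take arbitrarily large values (concretely, iterated mutation produces $3$-cycles of total arrow counts $8, 12, 16,\dots$). Combined with the parity invariant, the distribution set for $Mut[Q]$ is an unbounded subset of the positive even integers and therefore cannot equal any interval $[a,b]$ of positive integers, so it is not continuous in the sense of Definition \ref{continuous}. The main obstacle is simply isolating an invariant preserved by mutation; once $Q$ is chosen with all even multiplicities the mutation formula preserves parity automatically, and the qualifier ``in general'' in the statement accommodates the exceptional mutation classes (such as rank-$2$ quivers) whose distribution sets are trivially continuous.
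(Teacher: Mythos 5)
Your proposal is correct and follows essentially the same route as the paper: the paper's own justification is an example of a quiver all of whose mutation-equivalent quivers have arrow count divisible by $n>2$, together with the observation that at least two distinct values occur, and your argument is the same congruence-invariant mechanism with the modulus $2$ in place of $n$ (plus an unboundedness claim that is more than is needed, since two distinct even values already rule out an integer interval). The only cosmetic difference is that your verification of mutation-infiniteness via arrow counts of rank-$3$ surface quivers could be replaced by a one-line appeal to Lemma \ref{mutation-finite}, since mutating at vertex $2$ immediately produces an arrow of multiplicity $4$.
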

Here we present a simple example.

\begin{Example}
  Let $Q$ be the following quiver with $n>2$.
  \begin{figure}[H]
    \centering
    \includegraphics[width=30mm]{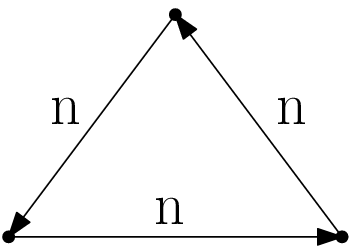}
  \end{figure}
  According to the definition of mutations, we can see that the number of arrows of any $Q^{\prime}\in$$Mut[Q]$ must be a multiple of $n$ and there are at least two quivers in $Mut[Q]$ which have different numbers of arrows. Thus this is an example of a discrete distribution set.
\end{Example}

\subsection{The number of arrows between two vertices}

First, we will show that for any quiver in $Mut[Q]$ where $Q$ is of infinite mutation type, there is no maximal number of arrows between any two vertices.
As a consequence, in this case, we have
\begin{Fact}
Let $Q$ be a quiver of infinite mutation type, then there is no upper bound on the distribution set for $Mut[Q]$.
\end{Fact}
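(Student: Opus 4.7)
The plan is to argue by contradiction via a simple counting observation. Assume that the distribution set $W$ for $Mut[Q]$ admits an upper bound $N \in \mathbb{N}$. Since mutation preserves the vertex set, every quiver $Q' \in Mut[Q]$ has exactly $n := |Q_0|$ vertices, and by assumption $|Q'_1| \le N$ for each such $Q'$. A cluster quiver on $n$ vertices with at most $N$ arrows is encoded by a skew-symmetric integer matrix of size $n \times n$ whose entries are bounded in absolute value by $N$, and there are only finitely many such matrices. Hence $Mut[Q]$ is finite (whether one views its elements up to isomorphism, as in the paper's definition, or as labelled quivers), contradicting the hypothesis that $Q$ is of infinite mutation type.

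A longer route, matching the narrative of Subsection 5.2, would instead first establish the preceding assertion that the number of arrows between any two fixed vertices is unbounded across $Mut[Q]$ (this corresponds to one of the equivalences in Corollary 5.7). From that stronger statement the Fact is immediate: for any prospective bound $N$, one selects a quiver in $Mut[Q]$ that already carries more than $N$ arrows between some pair of vertices, which alone forces $|Q'_1| > N$.

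The expected obstacle, should one insist on the second route, is to show that a single occurrence of $\ge 3$ arrows between two vertices can be amplified to arbitrarily many by further mutations. The key local computation is the mutation rule on a three-vertex subquiver: $m$ arrows into $k$ and $n$ arrows out of $k$ produce $mn - s$ (or $mn + s$) arrows on the remaining edge, so one would iteratively mutate at a vertex adjacent to a multi-arrow to drive the multiplicity upward, being careful that subsequent mutations do not undo the growth. The direct counting argument above sidesteps this combinatorial bookkeeping entirely, so I would adopt it as the main proof of the Fact and reserve the multiplicity-based analysis for Corollary 5.7, where it is genuinely needed.
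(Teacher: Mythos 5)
Your primary argument is correct, and it is genuinely different from (and more elementary than) the paper's route. The paper does not prove the Fact directly: it states it as a consequence of the preceding claim that the multiplicity of arrows between a fixed pair of vertices is unbounded over $Mut[Q]$, which in turn rests on Lemma 5.3 (quoted from Felikson--Shapiro--Tumarkin) and the amplification argument of Proposition 5.6 with its case analysis and appendix. Your counting argument bypasses all of that: a fixed vertex set of size $n$ together with a bound $N$ on the total number of arrows leaves only finitely many skew-symmetric integer matrices, hence finitely many quivers (a fortiori up to isomorphism), so $Mut[Q]$ would be finite, contradicting mutation-infiniteness. This is airtight given the paper's definition that ``finite mutation type'' means $Mut[Q]$ is finite, and it is arguably the right proof of the Fact in isolation. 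What the paper's longer route buys is the strictly stronger per-pair unboundedness statement, which is genuinely needed for the equivalences in Corollary 5.7 and cannot be recovered from your counting argument; your proposal correctly recognizes this division of labour and reserves the multiplicity analysis for where it is required.
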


\begin{Lemma}[\cite{FST2}] Let $Q$ be a connected quiver with more than two vertices. Then $Q$ is mutation-finite if and only if the multiplicity of the arrow between any two vertices $s,t$ is at most $2$ for any $Q^{\prime}\in$ Mut$[Q]$.
\label{mutation-finite}
\end{Lemma}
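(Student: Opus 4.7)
The plan is to prove the two implications separately. The backward direction is essentially a finiteness count, while the forward direction reduces to a rank-3 analysis.

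For the easy direction ($\Leftarrow$), I would argue as follows. Since mutation preserves the number of vertices, every $Q' \in Mut[Q]$ has the same vertex set of size $n$. If moreover every $Q'$ has at most two arrows between any pair of vertices, then each $Q'$ is encoded (up to isomorphism) by a skew-symmetric $n\times n$ integer matrix whose entries all lie in $\{-2,-1,0,1,2\}$. There are only finitely many such matrices, so $Mut[Q]$ contains only finitely many isomorphism classes, and therefore $Q$ is mutation-finite.

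For the hard direction ($\Rightarrow$) I would prove the contrapositive: if some $Q^* \in Mut[Q]$ carries an edge of multiplicity $\geq 3$ between vertices $s,t$, then $Mut[Q]$ is infinite. Since mutation preserves connectedness and $Q^*$ has $n \geq 3$ vertices, we can choose a third vertex $u$ adjacent to $s$ or $t$; the full subquiver $P := Q^*|_{\{s,t,u\}}$ is then a connected rank-$3$ quiver carrying one edge of multiplicity $\geq 3$. The crucial local-to-global observation is that, for any $v \in \{s,t,u\}$ and any $R \in Mut[Q]$, one has $\mu_v(R)|_{\{s,t,u\}} = \mu_v(R|_{\{s,t,u\}})$. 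This is immediate from the mutation rule, since the arrows created or destroyed by $\mu_v$ between two vertices $x,y$ depend only on the counts of arrows $x \to v$ and $v \to y$, which lie inside the subquiver $\{s,t,u\}$ when $x,y,v$ do. Consequently, the restriction map $R \mapsto R|_{\{s,t,u\}}$ carries the subfamily of $Mut[Q]$ obtained from $Q^*$ by mutations only at $\{s,t,u\}$ surjectively onto the full rank-$3$ mutation class of $P$; if this rank-$3$ mutation class is infinite, then so is $Mut[Q]$.

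It therefore suffices to establish the rank-$3$ statement: a rank-$3$ cluster quiver with some edge of multiplicity $\geq 3$ is mutation-infinite. I expect this to be the main obstacle. I would address it by an explicit analysis in the style of the classical rank-$3$ classification (Beineke--Br\"ustle--Hille, Seven): for the cyclic case with multiplicities $(a,b,c)$ with $\max(a,b,c)\geq 3$, one checks that mutation at an appropriately chosen vertex strictly increases the maximum multiplicity, and iterating gives an unbounded orbit; for the acyclic case, the Markov-type quadratic form $a^2+b^2+c^2-abc$ separates finite/affine types from wild ones, and a single mutation at the source of a wild acyclic triangle produces a cyclic triangle satisfying the cyclic criterion above. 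Once this rank-$3$ lemma is in hand, the reduction described in the previous paragraph closes the proof of Lemma~\ref{mutation-finite}.
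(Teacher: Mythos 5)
First, a point of reference: the paper does not prove Lemma \ref{mutation-finite} at all; it is quoted from \cite{FST2} and used as a black box, and its content resurfaces only inside Case 1 of the proof of Proposition \ref{>4}. So the comparison is against the standard argument rather than against a proof written in this paper. Your backward direction (bounded multiplicities on a fixed vertex set give finitely many skew-symmetric matrices) is correct, and your reduction of the forward direction to rank $3$ is also correct and is exactly the standard route: restriction to the full subquiver on $\{s,t,u\}$ commutes with mutation at vertices of that set, so an infinite rank-$3$ class forces unbounded multiplicities, hence an infinite class, upstairs.

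The rank-$3$ step, which you rightly flag as the main obstacle, contains two concrete errors as written. First, for a cyclic triangle with multiplicities $a\ge b\ge c\ge 1$ and $a\ge 3$, mutating at the vertex not incident to the arrow of multiplicity $c$ replaces $c$ by $ab-c$; this exceeds $b$ but need not exceed $a$ (the triple $(10,1,1)$ becomes $(10,9,1)$), so it is false that the maximum multiplicity strictly increases at each step. The iteration still works, but the monovariant must be something like the sum $a+b+c$, which increases by $ab-2c\ge 3c-2c>0$, or the lexicographic order on the sorted triple. Second, mutating at the \emph{source} of an acyclic triangle does not produce an oriented cycle: a source has no incoming arrows, so the mutation merely reverses its two outgoing arrows and the quiver stays acyclic. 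To create a cycle you must mutate at the vertex that is neither source nor sink, i.e.\ the one with an incoming and an outgoing arrow; this is precisely what the paper does in Case 1 of the proof of Proposition \ref{>4} (``mutating at the vertex incident to them'', meaning incident to the two arrows in the same orientation). That choice of vertex also handles the case where the heavy edge lies in a subquiver that is a path rather than a full triangle, a case your Markov-form discussion does not cover. With these two repairs the proposal becomes a correct proof along the standard lines.
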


\begin{Lemma}[\cite{FST1}]
   Let $Q_{1}$ be a proper subquiver of $Q$, and $Q_{2}$ be a quiver mutation equivalent to $Q_{1}$. Then there is a quiver $Q^{\prime}$ which is mutation equivalent to $Q$ and $Q_{2}$ is a subquiver of it.
   \label{sub}
\end{Lemma}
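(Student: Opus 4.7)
My plan is to apply to $Q$ the very same sequence of mutations that carries $Q_1$ to $Q_2$, and then check that the resulting quiver $Q'$ contains $Q_2$ as the induced subquiver on the vertex set of $Q_1$. Concretely, write $V_1$ for the vertex set of $Q_1$ and fix indices $i_1,\dots,i_k\in V_1$ with $Q_2=\mu_{i_k}\circ\cdots\circ\mu_{i_1}(Q_1)$; then set $Q':=\mu_{i_k}\circ\cdots\circ\mu_{i_1}(Q)$. The whole argument is then the claim that the induced subquiver of $Q'$ on $V_1$ equals $Q_2$, which I would prove by induction on $k$.

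The key observation driving the induction is the locality of the mutation rule. When one mutates at a vertex $v$, the three bullet points in the definition of mutation (reversal of arrows incident to $v$; the $mn\pm s$ formula on each pair $i,j$; leaving other arrows unchanged) only reference the arrows $i\to v$, $v\to j$, and $i\leftrightarrow j$. Whenever $v,i,j$ all lie in $V_1$, these input multiplicities are the same in $Q$ as they are in the induced subquiver of $Q$ on $V_1$, and the output multiplicity is again between two vertices of $V_1$. Consequently, for $v\in V_1$, the operation ``take induced subquiver on $V_1$'' commutes with $\mu_v$. Iterating this along the sequence $i_1,\dots,i_k$ gives the desired equality.

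The main obstacle, such as it is, is purely bookkeeping: one must make sure that arrows in $Q$ between a vertex of $V_1$ and a vertex outside $V_1$ cannot leak into the $V_1$-subquiver during a single mutation. This is exactly what the locality statement rules out, since the $mn\pm s$ formula at $v\in V_1$ only creates arrows from $i$ to $j$ via a path $i\to v\to j$ through $v\in V_1$, never via a detour through a vertex of $Q\setminus V_1$. Once that observation is in hand, the induction step is immediate and the lemma follows without any further input.
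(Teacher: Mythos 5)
Your argument is correct, and it is the standard proof of this fact: the paper itself quotes the lemma from \cite{FST1} without proof, so there is nothing to compare against beyond noting that your locality argument (mutation at $v\in V_1$ commutes with passing to the full subquiver on $V_1$, since the $mn\pm s$ rule for a pair $i,j\in V_1$ only consults arrows among $i,j,v$) is exactly how this is established in the literature. The only point worth making explicit is that ``subquiver'' here must be read as \emph{full} (induced) subquiver --- which you do, by phrasing everything in terms of the induced subquiver on the vertex set of $Q_1$ --- since for a non-full subquiver the commutation statement would not even be well posed.
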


With the help of two lemmas above, we have the following equivalent conditions for mutation-infiniteness.

\begin{Proposition}\label{>4}
  Let $Q$ be a connected quiver with $n\geqslant 3$ vertices and $N$ be any integer not less than 2. Then the following conditions are equivalent:
\begin{enumerate}[(a)]
     \item $Q$ is mutation-infinite.
     \item There is a quiver $Q^{\prime}\in$ Mut$[Q]$ satisfying that the number of the arrows between $s,t$ is larger than $2$, for some $s,t\in[1,n]$.
     \item There is a quiver $Q^{*}\in$ Mut$[Q]$ such that the number of arrows between $s,t$ is larger than $N$ for any vertices $s,t$.
\end{enumerate}
\end{Proposition}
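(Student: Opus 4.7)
The plan is to prove the three-way equivalence via the cycle $(c) \Rightarrow (b) \Rightarrow (a) \Rightarrow (c)$. The first arrow is immediate since $N \geq 2$: a quiver realizing $(c)$ has more than $2$ arrows between every pair of vertices, hence in particular between some pair. The second arrow $(b) \Rightarrow (a)$ is just the contrapositive of Lemma \ref{mutation-finite}: a connected mutation-finite quiver on at least three vertices admits at most two arrows between any two vertices across its entire mutation class, contradicting the hypothesis of $(b)$.

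For the substantive direction $(a) \Rightarrow (c)$, I would split the argument into a \emph{blow-up} step followed by a \emph{propagation} step. From $(a)$ and Lemma \ref{mutation-finite}, one first obtains a quiver $Q_0 \in Mut[Q]$ and a pair of vertices $(s_0,t_0)$ joined by at least three arrows in $Q_0$. Choosing a third vertex $u$ of $Q_0$ adjacent to $s_0$ or $t_0$ (such a vertex exists by connectedness of $Q_0$), the full subquiver on $\{s_0,t_0,u\}$ is itself mutation-infinite by Lemma \ref{mutation-finite}. A direct calculation with the mutation rule of the form $a \mapsto |bc \pm a|$ on this generalized Markov-type three-vertex subquiver shows that an alternating sequence of mutations at $u, s_0, t_0, u, s_0, t_0, \ldots$ drives the multiplicities on all three pairs strictly above any prescribed bound. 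If $n=3$ this already yields $(c)$; if $n>3$ one appeals to Lemma \ref{sub} to lift the boosted three-vertex configuration to a quiver $Q_1 \in Mut[Q]$ containing the same large multiplicities on the pairs in $\{s_0,t_0,u\}$.

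The propagation step then extends the large multiplicity to every remaining pair in $Q_1$. The key formula is that mutating at a vertex $k$ replaces the multiplicity $a$ on any pair $(i,j)$ by $|mn \pm a|$, where $m$ and $n$ are the multiplicities of the pairs $(i,k)$ and $(k,j)$; so a single mutation at a bridging vertex transfers a huge multiplicity from one pair to a fresh neighboring pair, and using connectedness one can walk through a spanning tree of the quiver, boosting pairs one at a time. The main obstacle, which I expect to be the crux of the technical work, is that the same mutation which creates a large multiplicity on a new pair can also \emph{cancel} an already-boosted pair via the $mn-a=0$ pathology. Overcoming this requires either (i) starting the propagation from a multiplicity astronomically larger than $N$, chosen as a function of $n$ and $N$, so that every cancellation still leaves the surviving multiplicity above $N$, or (ii) a careful ordering of mutations along a spanning tree so that each boosted pair is sealed off, with later mutations touching only disjoint branches. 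Either variant completes the proof of $(a) \Rightarrow (c)$.
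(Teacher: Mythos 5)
Your equivalence cycle and the blow-up step coincide with the paper's argument: (a)$\Leftrightarrow$(b) is Lemma \ref{mutation-finite}, (c)$\Rightarrow$(b) is trivial, and the three-vertex Markov-type growth (make the triangle cyclically oriented, then repeatedly mutate at the vertex opposite the smallest multiplicity so that $c\mapsto ab-c>b$) is exactly the paper's Case $n=3$. The lift via Lemma \ref{sub} is also how the paper passes from the $3$-vertex configuration to larger quivers. Up to this point your proposal is sound.

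The genuine gap is the propagation step, which you correctly identify as the crux but do not carry out, and neither of your two proposed remedies is adequate as stated. Remedy (i) --- starting from multiplicities ``astronomically larger than $N$'' --- does not work on its own: the cancellation $|mn-a|$ can be arbitrarily small (even zero) no matter how large $m$, $n$, $a$ are individually, since it depends on the relative sizes and on the orientation of the triangle, not on the magnitudes. Remedy (ii) --- a careful ordering along a spanning tree --- is essentially what is needed, but the entire content of the proposition lies in exhibiting such an ordering and verifying it. The paper does this concretely: for $n=4$ it shows (by the exhaustive case check in the Appendix) that iterating the fixed sequence $\mu_{3}\mu_{2}\mu_{1}$ always drives the quiver to a canonical form $Q^{*}$ while the multiplicities $a>b>c$ never decrease, and that continuing the iteration puts the quiver into a cycle in which \emph{all six} multiplicities grow; for $n>4$ it reuses these verified three-vertex mutation sequences on successive triangles $\Delta_{ijk}$, checking that each new round preserves the largeness already achieved. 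Without an analogue of that verification --- in particular, a proof that your spanning-tree mutations never annihilate an already-boosted pair and that every one of the $\binom{n}{2}$ pairs (not merely those on the tree) ends up above $N$ --- the implication (b)$\Rightarrow$(c) remains unproved. You should either reproduce the explicit orientation-and-monotonicity analysis for the $4$-vertex configuration or supply a different invariant that rules out the $mn-a$ collapse.
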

\begin{proof}
The equivalence of (a) and (b) is exactly Lemma \ref{mutation-finite}. And (c) leading to (b) is evident, we next prove (b) leading to (c). Fix an integer $N\geqslant 2$.

\textbf{Case 1:} $n=3$. In fact this is proved in the proof of Lemma \ref{mutation-finite} in \cite{FST2}. Assume that there are three arrows of multiplicities $a,b,c$ respectively where $a\geqslant b\geqslant c$ and $a\geqslant 3$. In the rest, slightly abusing notations, we will also use the multiplicities $a,b,c$ to represent the corresponding arrows respectively. If above arrows are cyclically oriented, then mutating at the vertex not incident to arrow c makes the new arrow with multiplicity $ab-c>b$ and the new quiver is still cyclically oriented. Iteratively mutate at vertex not incident to the arrow of smallest multiplicity, consequently all three arrows have multiplicities larger than $N$. If they are not cyclically oriented, then there are two arrows in the same orientation. Mutating at the vertex incident to them leads to a cyclically oriented quiver.

\textbf{Case 2:} $n=4$. Because of the $n=3$ case and Lemma \ref{sub}, every connected quiver with an arrow having multiplicity more than 2 is mutation equivalent to one of following form:
\begin{figure}[H]
  \centering
  \includegraphics[width=2cm]{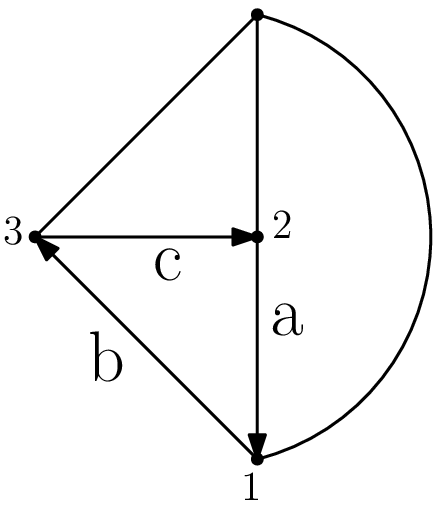}
\end{figure}
where $a>b>c>N$. The orientations and multiplicities of other arrows are not sure. (multiplicity being $0$ means the absence of this arrow.) But the quiver is still connected since connection is mutation-invariant. It is showed in Appendix that iteratively applying mutations $\mu_{3}\mu_{2}\mu_{1}$ always change the quiver to $Q^{*}$ (see Figure \ref{Q*}).
\begin{figure}[H]
  \centering
  \includegraphics[width=2cm]{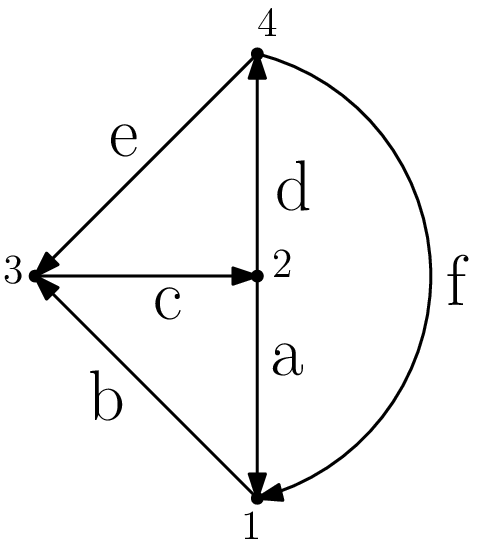}
  \caption{$Q^{*}$}
  \label{Q*}
\end{figure}
With little abuse of notations, here we still use $a,b,c,d,e,f$ to represent arrows of quivers after mutations. Similar to the discussion in case $n=3$, in the process presented in Appendix, we can see that $\mu_{i}$ ($i=1,2,3$) always keeps the multiplicity of arrow incident to $i$ and changes the other from the least to the largest among $a,b,c$. Therefore in $Q^{*}$, we still have $a>b>c>N$. Besides, if continue to apply $\mu_{3}\mu_{2}\mu_{1}$, the quiver goes through a circulation with all multiplicities growing as the following figure.
\begin{figure}[H]
  \centering
  \includegraphics[width=8cm]{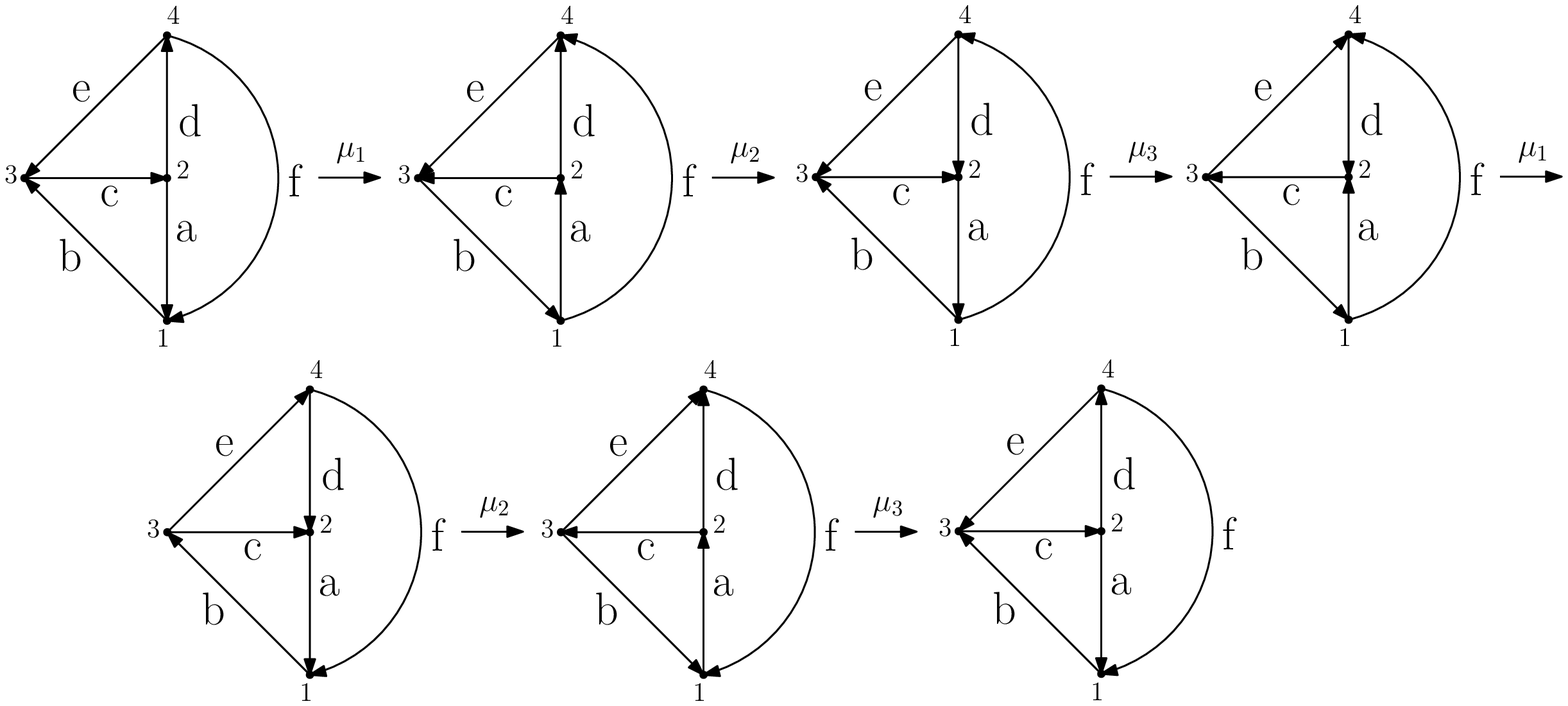}
\end{figure}
Thus after enough (even) times $\mu_{3}\mu_{2}\mu_{1}$, we get a quiver with the same arrows satisfying $a>b>c$, $d>e>c$, and $c,f>N$.

\textbf{Case 3:} $n>4$. Denote by $\Delta_{123}$  the full subquiver with vertices 1, 2 and 3. Similar things happen to vertices connected to the triangle $\Delta_{123}$ in original quiver, which means that by iteratively applying mutations $\mu_{3}\mu_{2}\mu_{1}$, the subquiver with vertices 1, 2, 3 and any vertex connected to the triangle $\Delta_{123}$ (we label this vertex 4) is of form $Q^{*}$ with $a>b>c$, $d>e>c$, and $c,f>N$. Next if we iteratively apply mutations $\mu_{3}\mu_{2}\mu_{4}$ instead of $\mu_{3}\mu_{2}\mu_{1}$, similarly as case 2, we can keep the multiplicities of arrows incident to a vertex of the triangle $\Delta_{123}$ larger than $N$ with direction unchanged, but additionally make the full subquiver consisting of vertices 4, 2, 3 and a vertex either connected to the triangle $\Delta_{423}$ or that connected to the triangle $\Delta_{123}$ to be of form $Q^{*}$ with $a>b>c$, $d>e>c$, and $c,f>N$, as shown in the following figure.
\begin{figure}[H]
  \centering
  \includegraphics[width=14cm]{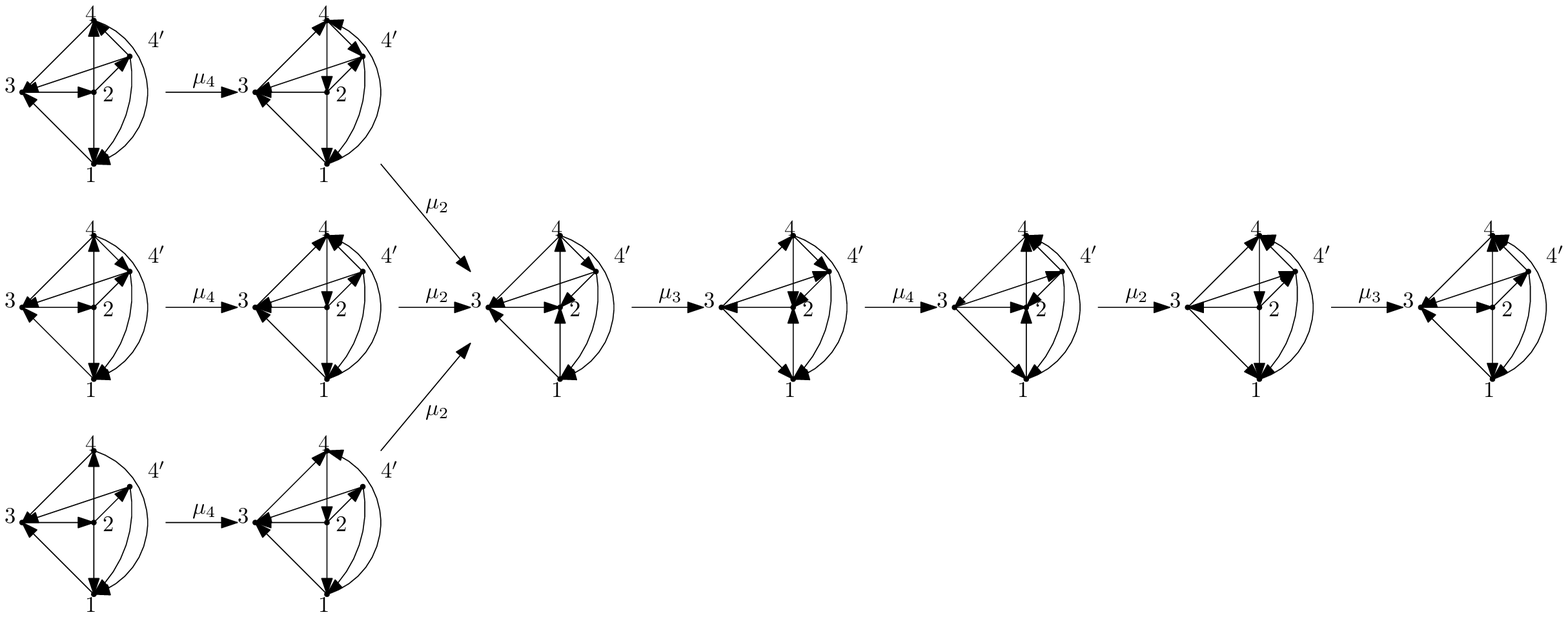}
\end{figure}
Therefore, because of the finiteness of the number of vertices in a quiver, we can obtain a quiver in $Mut[Q]$ whose the number of multiplicities of arrows between any two vertices may be larger than any $N$ by the following procedures:
\begin{enumerate}
  \item Label all vertices by $1,2,3,\ldots,n$, particularly let subquiver incident to 1, 2 and 3 be connected;
  \item Iteratively apply mutations $\mu_{3}\mu_{2}\mu_{1}$ until all arrows connecting vertices and a vertex of triangle $\Delta_{123}$ having multiplicity larger than $N$;
  \item Choose a mutation sequence $\mu_{i}\mu_{j}\mu_{k}$ other than those we have applied in the former steps, where the multiplicities of arrows in $\Delta_{ijk}$ is larger than $N$ due to above steps and $m_{i}>m_{j}>m_{k}$ (here $m_{i}$ denotes the multiplicity of the arrow in $\Delta_{ijk}$ not incident to $i$). Iteratively apply $\mu_{i}\mu_{j}\mu_{k}$ until all arrows incident to a vertex of triangle $\Delta_{ijk}$ having multiplicity larger than $N$;
  \item Repeat the former step enough times to make all multiplicities of arrows larger than $N$.
\end{enumerate}
\end{proof}

As a corollary, the above result can be restated in the language more related to former sections as follows.
\begin{Corollary}
  Let $Q$ be a connected quiver with $n\geqslant 3$ vertices, then the following are equivalent:
  \begin{itemize}
    \item $Q$ is mutation-infinite.
    \item There is no upper bound on the distribution set for $Mut[Q]$.
    \item There is a quiver in $Mut[Q]$ having lager than 2 arrows between certain two vertices.
    \item For any two vertices of $Q$, the maximal number of arrows between this two vertices of a quiver in $Mut[Q]$ is infinite.
  \end{itemize}
\end{Corollary}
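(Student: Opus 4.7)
The plan is to close a short cycle of implications among the four listed conditions, drawing on Proposition~\ref{>4} as the technical engine. Label the conditions as (a) $Q$ is mutation-infinite, (b) the distribution set for $Mut[Q]$ has no upper bound, (c) some $Q'\in Mut[Q]$ has more than two arrows between some pair of vertices, and (d) for every pair of vertices the maximal multiplicity across $Mut[Q]$ is infinite.

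The equivalence (a)$\Leftrightarrow$(c) is already packaged in Lemma~\ref{mutation-finite} (equivalently parts (a)$\Leftrightarrow$(b) of Proposition~\ref{>4}), so I would cite it directly. For (a)$\Rightarrow$(d), I would invoke part~(c) of Proposition~\ref{>4}: given any integer $N\geqslant 2$, one obtains $Q^{*}\in Mut[Q]$ in which every pair of vertices carries more than $N$ arrows; in particular, for any prescribed pair $s,t$ the multiplicity in $Q^{*}$ exceeds $N$, so the supremum over $Mut[Q]$ of the multiplicity at $(s,t)$ is infinite. The implication (d)$\Rightarrow$(b) is immediate, since $|Q'_{1}|$ dominates the multiplicity at any single pair of vertices of $Q'$, so unboundedness at even one pair already forces unboundedness of the distribution set. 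To close the cycle I would verify (b)$\Rightarrow$(a) by contrapositive: if $Q$ is of finite mutation type, then $Mut[Q]$ is a finite set of isomorphism classes, whence $\{|Q'_{1}|:Q'\in Mut[Q]\}$ is a finite subset of the nonnegative integers and is therefore bounded.

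The main obstacle has already been absorbed by Proposition~\ref{>4}, whose iterated mutation argument amplifies a single arrow of multiplicity larger than $2$ into arbitrarily large multiplicities at every pair simultaneously. What remains in the corollary is essentially bookkeeping; the one point worth flagging is that the universal quantifier in (d) is delivered for free by the formulation of Proposition~\ref{>4}(c), where a \emph{single} quiver $Q^{*}$ witnesses the lower bound at all pairs at once, so no separate construction is needed for each pair $(s,t)$.
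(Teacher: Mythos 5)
Your proposal is correct and follows essentially the same route as the paper, which presents this corollary as a direct restatement of Proposition~\ref{>4} (together with Lemma~\ref{mutation-finite}) without spelling out the implications. Your cycle (a)$\Rightarrow$(d)$\Rightarrow$(b)$\Rightarrow$(a) plus (a)$\Leftrightarrow$(c) is exactly the bookkeeping the paper leaves implicit, and each step is sound.
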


\begin{Remark}
  Although we only focus on mutation-infinite cluster quivers corresponding to mutation-infinite skew-symmetric cluster algebras, the whole method still works for mutation-infinite skew-symmetrizable case by replacing quivers with weighted quivers and replacing multiplicities of arrows with weights. Then with almost the same discussion, we can get the analogous results in the skew-symmetrizable case.
\end{Remark}

\section{Appendix}
In this appendix we show that when $n=4$, iteratively applying mutations $\mu_{3}\mu_{2}\mu_{1}$ always changes the quiver to $Q^{*}$. And during this process the multiplicities of $a,b,c$ never decrease.
\begin{figure}[H]
  \centering
  \includegraphics[width=16cm]{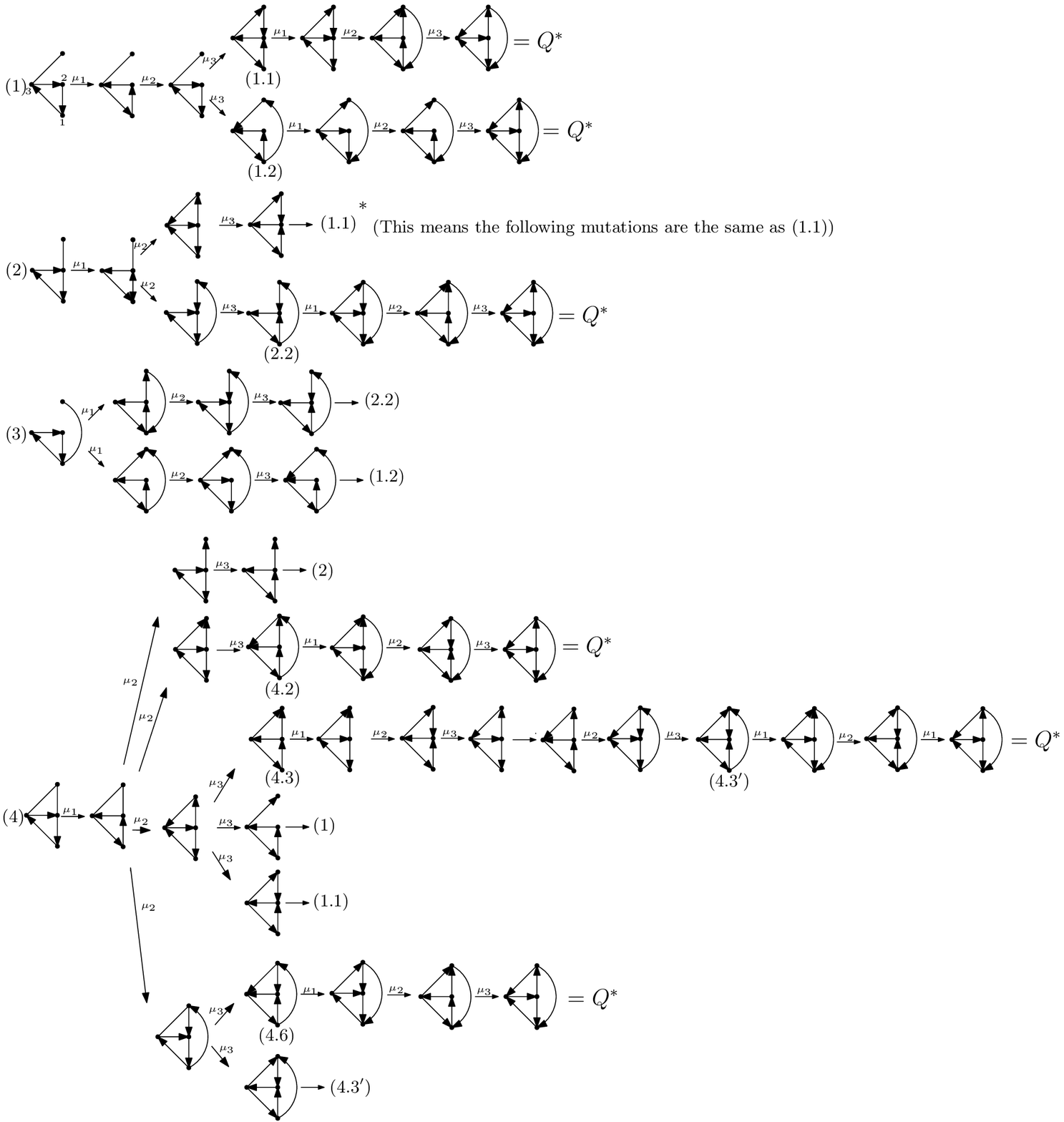}
\end{figure}
\begin{figure}[H]
  \centering
  \includegraphics[width=16.5cm]{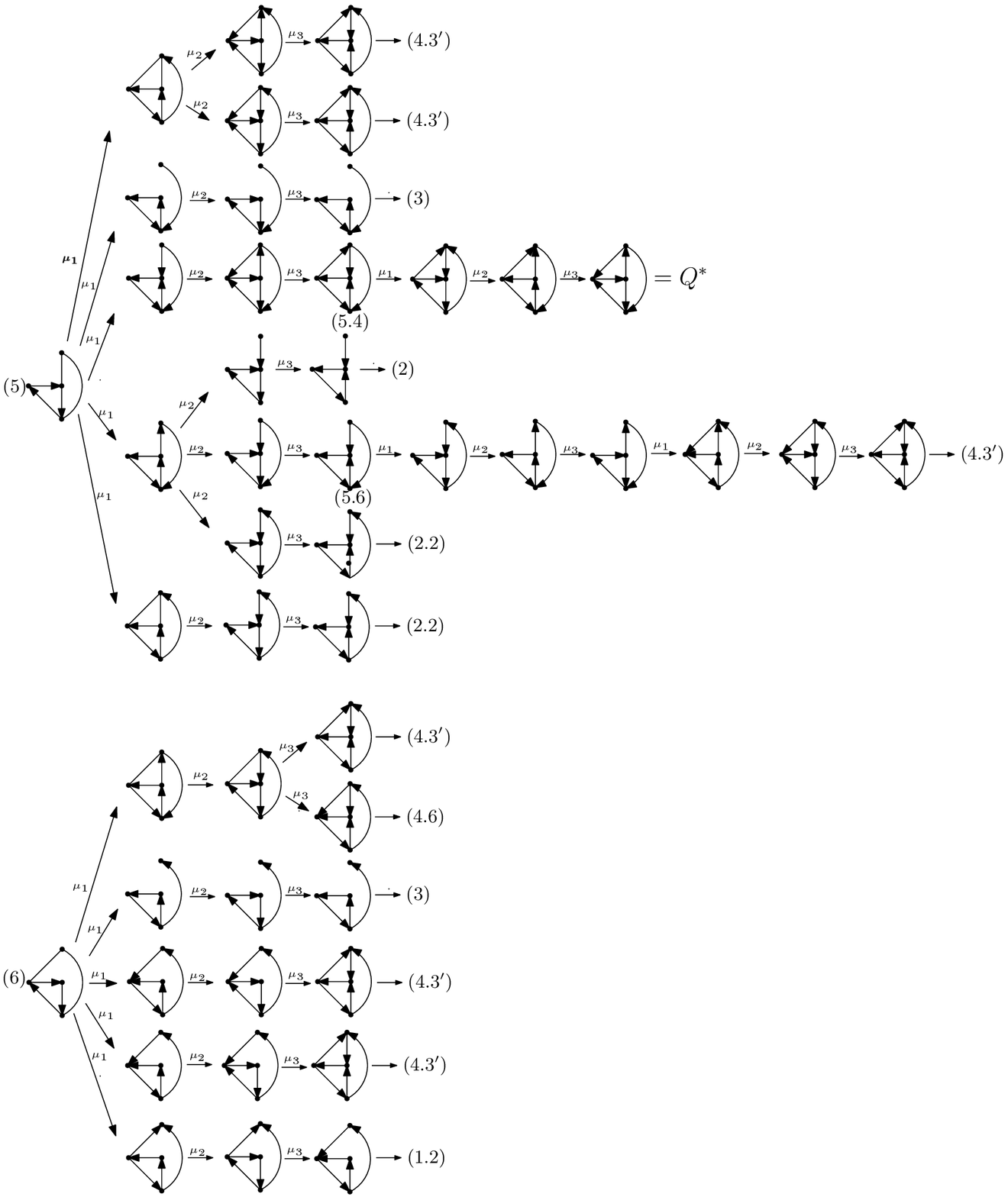}
\end{figure}
\begin{figure}[H]
  \centering
  \includegraphics[width=14cm]{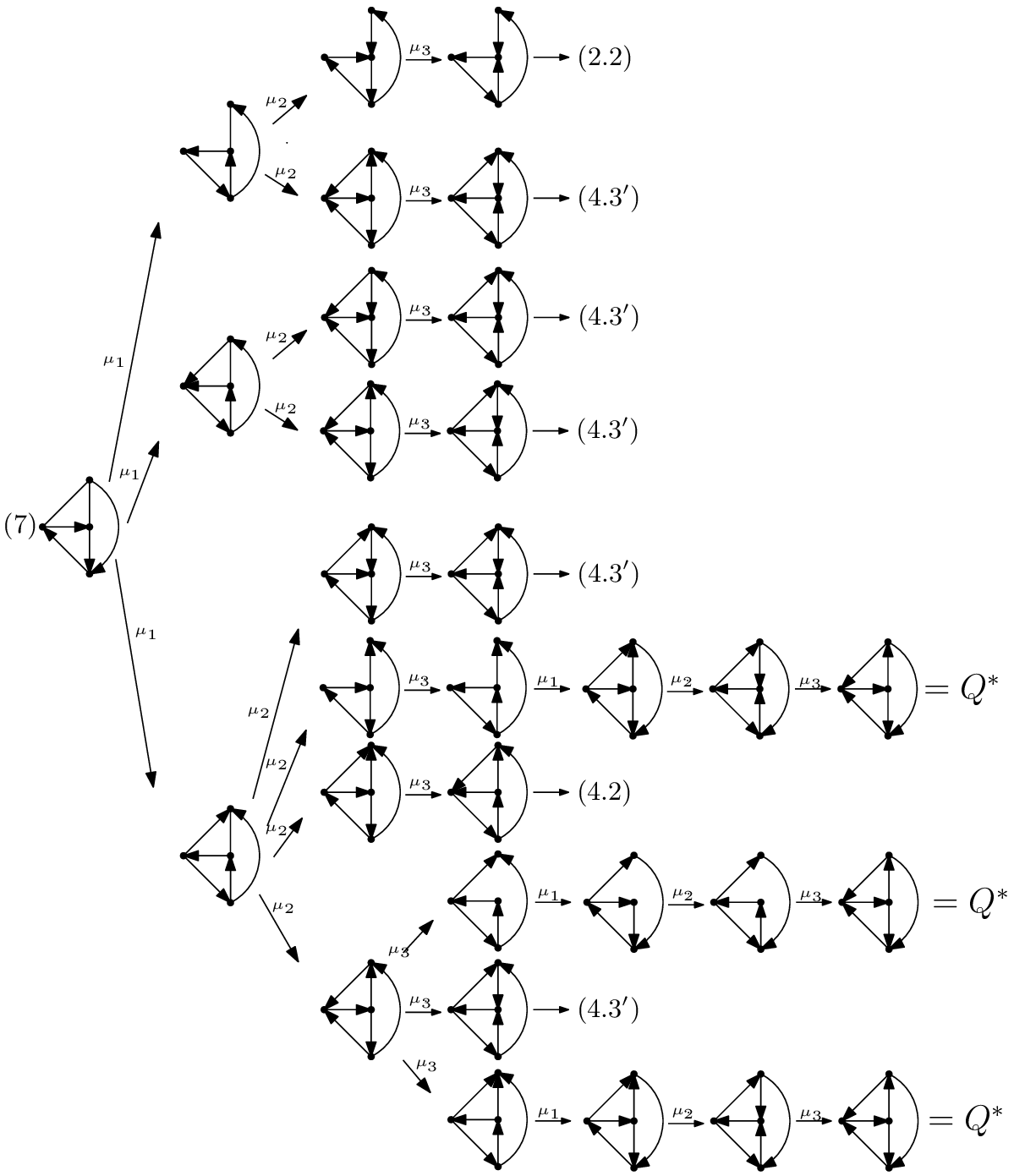}
\end{figure}
\begin{figure}[H]
  \centering
  \includegraphics[width=14cm]{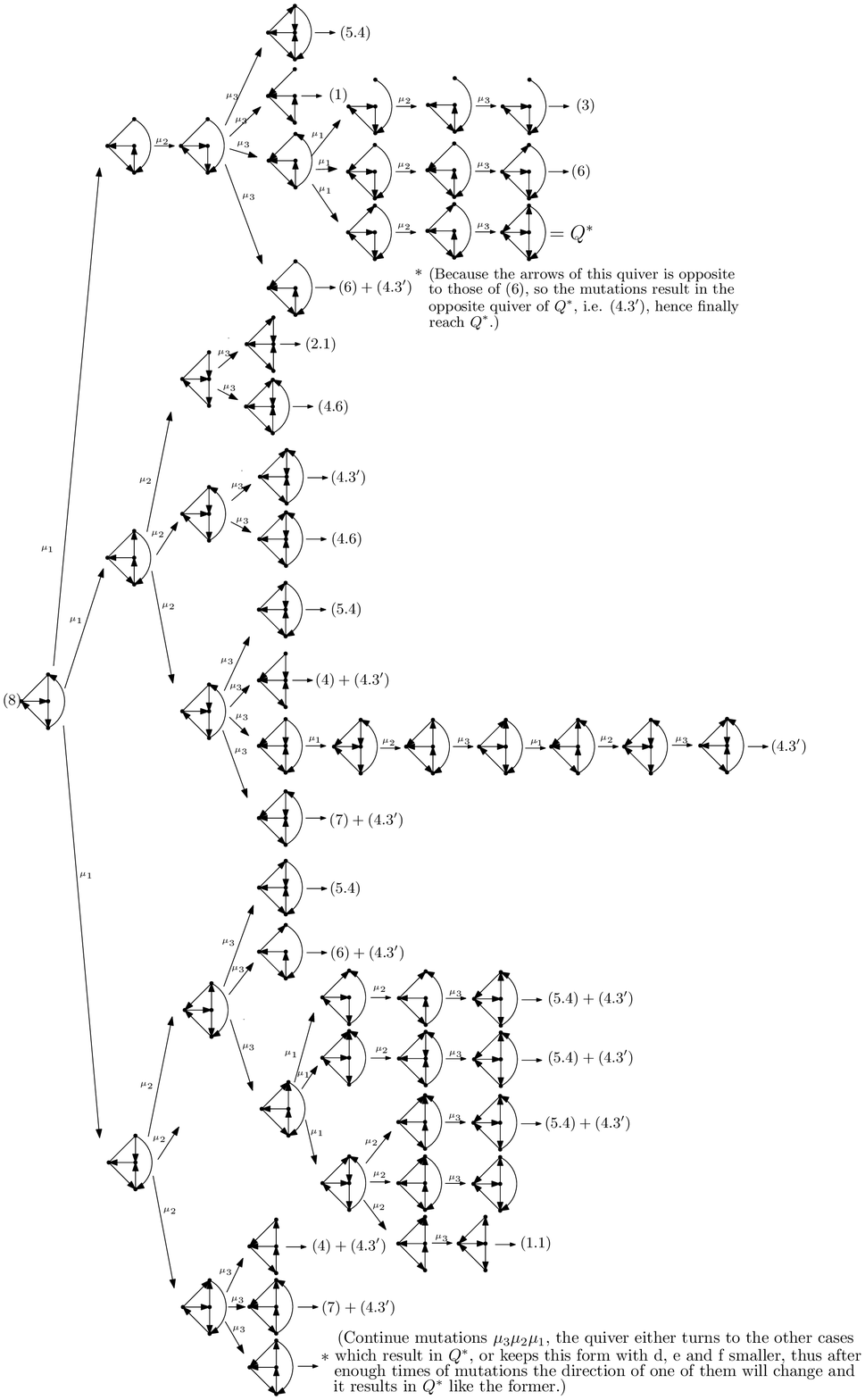}
\end{figure}
\vspace{3mm}

{\bf Acknowledgements}: {\em This project is supported by the National Natural Science Foundation of
China (No.12071422) and the Zhejiang Provincial Natural Science Foundation of China (No.LY19A010023)}.
\vspace{3mm}

\newpage

\bibliographystyle{amsplain}

\end{document}